\newtheorem{theorem}{Theorem}[section]
\newtheorem{lemma}{Lemma}[section]
\newtheorem{proposition}{\hspace{2em} Proposition}[section]
\newtheorem{df}{\hspace{2em} Definition}[section]
\newtheorem{corollary}{\hspace{2em} Corollary}[section]
\newtheorem{remark}{\it Remark}[section]
\newtheorem{claim}{\it {Claim}}[section]
\def\F{\mathbb{F}}
\def\N{\mathbb{N}}
\def\R{\mathbb{R}}
\def\cF{\mathcal{F}}
\def\cK{\mathcal{K}}
\newcommand{\la}{\langle}
\newcommand{\ra}{\rangle}
\newcommand{\ran}{\rangle}
\newcommand{\lan}{\langle}
\newcommand{\intsigma}{\mathring\Sigma}
\def\ol{\iota}
\newcommand{\oH}{\mathring{\mathrm{H}}^1}
\newcommand{\ii}{\mathrm{i}}
\newenvironment{altproof}[1]
{\noindent
	{\em Proof of {#1}}.}
{\nopagebreak\mbox{}\hfill $\Box$\par\addvspace{0.5cm}}
\numberwithin{equation}{section}
\title{\textbf{Blow-up solutions for the steady state of the Keller-Segel system on Riemann surfaces }}
\author{ Mohameden Ahmedou \and Thomas Bartsch \and Zhengni Hu
}
\date{\today}
\begin{document}
	\maketitle
	\begin{abstract}
		We study the following Neumann boundary problem related to the stationary solutions of the Keller-Segel system, a basic model of chemotaxis phenomena:
		\begin{equation*}
			\left\{\begin{aligned}
				-\Delta_g u +\beta u  &=\lambda\left(\frac{Ve^u}{\int_{\Sigma}  Ve^u \, dv_g}-1\right) & & \text { in } \intsigma\\
				\partial_{ \nu_g} u&=0 & & \text { on } \partial \Sigma
			\end{aligned}\right.,
		\end{equation*} 
		on a compact Riemann surface $(\Sigma, g)$ of unit area, with interior $\intsigma$ and smooth boundary $\partial \Sigma$. Here, $\Delta_g$ denote the Laplace-Beltrami operator, $\, dv_g$ the area element of $(\Sigma, g)$, and $\nu_g$ the unit outward normal to $\partial \Sigma$ and  $\lambda$ and $\beta$ are non-negative parameters, $V$  is non-negative with finite zero set.
		
		For any $m\in \N_+$ and $k,l\in \N$ with $m=2k+l$, we establish a sufficient condition on $V$ for the existence of a sequence of blow-up solutions as $\lambda$ approaches the critical values $4\pi m$, which blows up at $k$ points in the interior and $l$ points on the boundary. Moreover, the study expands to the corresponding singular problem.
		  \\
		\noindent{\bf Keywords: }{\it  Keller-Segel models; Blow-up solutions; Lyapunov-Schmidt reduction}\\
		\noindent {\bf 2020 AMS Subject Classification:} 35J57, 58J05  
	\end{abstract}
	\newpage
	\tableofcontents
	
	\section{Introduction}
	The Keller-Segel system was first introduced in~\cite{keller1970} to show the aggregation of biological species. It is a coupled parabolic system for the concentration of species $u(x,t)$ and chemical released $v(x,t)$  as the following:
	\begin{equation}~\label{ks1}
		\left\{\begin{array}{ll}
			u_t(x)=\Delta u(x)-\chi(x) \nabla(u(x) \nabla v(x)),& x \in \Omega, t>0 \\
			\Gamma v_t(x)=\Delta v(x)-\beta_0 v(x)+\delta u(x),& x \in \Omega, t>0 \\
			u(x, 0)=u_0(x), & x \in \Omega\\
			v(x, 0)=v_0(x),&  x \in \Omega \\
			\frac{\partial u(x)}{\partial \nu}=\frac{\partial v(x)}{\partial \nu}=0,&  x\in \partial \Omega
		\end{array}\right.,
	\end{equation}
	where  $\Omega\subset \R^N$ ($N\geq 1$), $\nu$ is the unit outward normal to $\partial\Omega$, $\chi, \Gamma, \beta_0$ and $\delta$ are positive parameters. The mass of $u(x, t)$ is preserved in~\eqref{ks1}, i.e. 
	$$
	\int_{\Omega} u(x, t)=\int_{\Omega} u_0(x).
	$$
	Considering the stationary solutions of~\eqref{ks1}, the problem turns out to be an elliptic system.  
	After a transformation (see~\cite{Wang2002SteadySS,jager1992explosions,gajewski1998global}, for instance), $u=C e^v$ for some constant $C$. For $v$, we obtain the following problem with the Neumann boundary condition, 
	\begin{equation}~\label{ks3}
		\left\{\begin{array}{ll}
			-\Delta v+\beta v=\lambda\left(\frac{e^v}{\int_{\Omega} e^v}-\frac{1}{|\Omega|}\right),& x \in \Omega \\
			\frac{\partial v}{\partial \nu}=0, &\text { on } \partial \Omega
		\end{array}\right.,
	\end{equation}
	where $\nu$ is the unit outward normal on $\partial\Omega$,  $\beta\text{ and }\lambda$ are parameters.
	
	In the one-dimensional case, Schaaf demonstrates the existence of non-trivial solutions using a bifurcation technique in~\cite{Schaaf1985StationarySO}. For the higher-dimensional case with  $N\geq 3$, we refer to~\cite{AgudeloPistoia2016KellerSegel,PistoiaVaira2015KellerSegel,Biler1998Chemotaxis} and references therein. 
	
	This paper specifically focuses on the case where
	$N=2$. We will now delve into the literature on this particular setting.
	
	By Struwe's technique and blow-up analysis, Wang and Wei in~\cite{Wang2002SteadySS} obtain non-constant solutions of~\eqref{ks3} for $\beta>\frac{\lambda}{|\Omega|}-\lambda_1$ and $\lambda\in (4\pi, +\infty)\setminus 4\pi \mathbb{N}_+$, where $\lambda_1$ is the first eigenvalue of $-\Delta$ with the Neumann boundary condition. Independently, Senba and Suzuki obtain the same result in~\cite{Senba2000some}. Battaglia generates their result for $\lambda\in (0,+\infty)\setminus 4\pi\mathbb{N}_+$ and  $\beta$ with any sign in~\cite{Battaglia2018}. He proves the existence of non-constant solutions of~\eqref{ks3} with some algebraic conditions involved with $\beta,\lambda$ and eigenvalues $\{\lambda_i\}_{i=1}^{+\infty}$ by the variational method and Morse theory. 
	
	However, when $\lambda$ approaches the critical value set $4\pi\N_+$, the blow-up phenomena may occur. 
	Del Pino and Wei in \cite{pino_collapsing_2006} construct positive value bubbling solutions for the Neumann boundary  problem on bounded domains $\Omega$ with $\beta>0$
	\begin{equation}
		\label{eq:espsilon}
		\begin{cases}
			-\Delta u+\beta u=\varepsilon^2 e^u & \text{ in }\Omega\\
			\partial_{\nu} u=0 & \text{ on }\partial\Omega
		\end{cases},
	\end{equation}
	by the Lyapunov-Schmidt reduction. In particular, the  sequence of bubbling solutions blows up at $k$ distinct points 
	$\xi_1, \cdots, \xi_k$ inside the domain $\Omega$ and $l$ distinct points 
	$\xi_{k+1}, \cdots, \xi_{k+l}$ on the boundary of $\Omega$, i.e. as $\varepsilon\rightarrow 0$
	\[ u_{\varepsilon} \rightarrow \sum_{i=1}^k 8\pi \delta_{\xi_i}+\sum_{i=k+1}^{k+l} 4 \pi \delta_{\xi_{i}},\]
	where $\delta_{\xi}$ is the Dirac mass. 
	Subsequently, Del Pino, Pistoia, and Vaira in~\cite{DelPino2016KellerSegel} construct solutions of~\eqref{eq:espsilon} which blow up along the whole boundary $\partial\Omega$.

	This paper studies the Neumann boundary problem on a compact Riemann surface $\Sigma$ with  smooth boundary $\partial \Sigma$:
	\begin{equation}~\label{eq:main_eq}
		\left\{\begin{aligned}
			-\Delta_g u+\beta u &=\lambda\left(\frac{V e^u}{\int_{\Sigma}  Ve^u \, dv_g}-\frac{1}{|\Sigma|_g}\right) & & \text { in } \intsigma\\
			\partial_{ \nu_g} u&=0 & & \text { on } \partial \Sigma
		\end{aligned}\right.,
	\end{equation} 
	where the parameters $\lambda,\beta\in\mathbb{R}$ and $V$ is a non-negative smooth function with a finite zero set denoted as $\{q_1,\cdots,q_{\iota}\}$ for some $\iota\in\mathbb{N},$ $\intsigma:=\Sigma\setminus \partial\Sigma$ is the interior of $\Sigma$, $\Delta_g$ is the Laplace-Beltrami operator, $\, dv_g$ is the area element in $(\Sigma, g)$, $|\Sigma|_g=\int_{\Sigma} \, dv_g$, and $\nu_g$ is the unit outward normal of $\partial \Sigma$.
	
	This paper delves into the study of the blow-up solutions of the problem~\eqref{eq:main_eq}.  For integers $k,l\in \N \text{ with } 2k+l=m$, we establish a sufficient condition for blow-up solutions. Moreover, the precise locations of blow-up points are explicitly characterized by the ``stable" critical point of a reduced function  $\cF^V_{k,l}$.

	The non-linear equation in \eqref{ks3} with $\beta \equiv 0$ is a mean field equation. This equation arises in various branches of mathematics and physics, such as statistical mechanics  \cite{Caglioti1992EulerFlows, Caglioti1995EulerFlowsII,Kiessling1993LogInteractions}, Abelian Chern-Simons gauge theory  \cite{Nolasco1999ChernSimonsHiggs,Yang2001Solitons,Caffarelli1995VortexCondensation,Tarantello1996MultipleCondensates}, and conformal geometry  \cite{ChangYang1987GaussianCurvature,Tarantello2010MeanFieldEquations,KazdanWarner1974CurvatureFunctions,ChangGurskyYang1993ScalarCurvature,ChangYang1988ConformalDeformation,ChingQuan1993NirenbergProblem}. When it is equipped with Dirichlet boundary conditions, by  Lyapunov-Schmidt reduction the blow-up solutions of the mean field equations are well-studied both in domains of  Euclidean spaces $\mathbb{R}^2$ (refer to \cite{ del_pino_singular_2005, pino_collapsing_2006, Esposito2005} and the references therein) and on Riemann surfaces without boundaries (refer to \cite{Bartolucci2020, Esposito2014singular,figueroa2022bubbling}). 
	Recently, \cite{HBA2024} obtained blow-up solutions with Neumann boundary conditions on Riemann surfaces with boundaries under the condition of nonvanishing of a quantity related to $V$, Gaussian curvature of $\Sigma$ and geodesic curvature of $\partial\Sigma$. 
	
	As in these papers, our approach to finding blow-up solutions of \eqref{eq:main_eq} is based on variational methods combined with the Lyapunov-Schmidt reduction.
	In comparison to~\cite{HBA2024}, we relax the condition on the nonvanishing quantities  and extend our analysis to the case where $\beta\neq 0.$
	
	It is noteworthy that we allow $V$ to be $0$ at $q_i$ for any $i=1,\cdots,\ol$ where $\iota\in\N$. So, it is also possible to  establish  blow-up solutions for the following singular problem:
	\begin{equation}
		\label{eq:singular}
		\left\{\begin{aligned}
			-\Delta_g \tilde{u}+\beta \tilde{u} &=\lambda\left(\frac{\tilde{V} e^{\tilde{u}} }{\int_{\Sigma}  \tilde{V}e^{\tilde{u}} \, dv_g} -\frac{1}{|\Sigma|_g}\right)-\sum_{i=1}^{\ol}\frac{\varrho(q_i)}{2}n_i \left(\delta_{q_i}-\frac 1 {|\Sigma|_g}\right)& & \text { in } \intsigma\\
			\partial_{ \nu_g} \tilde{u}&=0 & & \text { on } \partial \Sigma
		\end{aligned}\right..
	\end{equation}
	Here, $\tilde{V}$ is a positive smooth function, 
	$\varrho(\xi)$ equals 
	$8\pi$ if $\xi\in \intsigma$ and equals 	$4\pi$  if $ \xi\in \partial \Sigma$ and $n_i\in \N_+$ for $i=1,\cdots,\ol.$ Notably, the problem~\eqref{eq:singular} emerges as a specific instance of~\eqref{eq:main_eq}. To elucidate, we define the Green's function through the following equations for any $\xi\in \Sigma$:
	\begin{equation}~\label{eq:green}
		\begin{cases}
			-\Delta_g G^g(x,\xi)+\beta  G^g(x,\xi)   =\delta_{\xi} -\frac{1}{|\Sigma|_g} &  x\in \intsigma\\
			\partial_{ \nu_g } G^g(x,\xi) =0 & x\in \partial \Sigma\\
			\int_{\Sigma} G^g(x,\xi) \, dv_g(x) =0
		\end{cases}. 
	\end{equation}
	We take $u(x)=\tilde{u}(x)+ \sum_{i=1}^{\iota} \frac{\varrho(q_i)}{2}n_iG^g(x,q_i)$ and  $V(x)=\tilde{V}(x) e^{- \sum_{i=1}^{\iota} \frac{\varrho(q_i)}{2} n_iG^g(x,q_i)}.$ $u$ satisfies the equations~\eqref{eq:main_eq} in which $V$ is a  non-negative smooth  function with the zero set $\{q_1,\cdots,q_{\ol}\}.$

	We present the main results, starting with defining the ``stable" critical points set as  in~\cite{del_pino_singular_2005,Esposito2005,Li1997OnAS}.
	\begin{df} 
		Let $F: D \rightarrow \mathbb{R}$ be a $C^{1}$-function and  $K$ be  a compact subset of critical points of $F$, i.e. 
		\[ K\subset \subset \{ x\in D: \nabla F(x)=0\}. \] 
		A critical set $K$ is $C^1$-stable if for any closed neighborhood $U$ of $K$ in $ D$, there exists $\varepsilon>0$ such that if $G: D \rightarrow \mathbb{R}$ is a $C^{1}$-function with $\|F-G\|_{C^1(U)}<\varepsilon$, then $G$ has at least one critical point in $U$.
	\end{df}
	The main theorem asserts the existence of a sequence of blow-up solutions for~\eqref{eq:main_eq}, with these solutions exhibiting blow-up behavior at the stable critical points of a reduced function $\cF^V_{k,l}$.  We define  the configuration set as follows:
	\begin{eqnarray*}
		\Xi_{k,l}=\intsigma^k\times(\partial\Sigma)^{l}\setminus \F_{k,l}(\Sigma),
	\end{eqnarray*}
	where  $	\F_{k,l}(\Sigma):=\left\{\xi=(\xi_1,\cdots,\xi_{k+l}): \xi_i=\xi_j
	\text{ for some } i=j \right\}$ is called the thick diagonal. 
	Let $\Sigma^{\prime}:=\{ x\in \Sigma: V(x)>0\}$ and then we define that
	\begin{equation}
		\label{eq:def_xi_km} \Xi_{k,l}^{\prime}:= \Xi_{k,l}\cap (\Sigma^{\prime})^{k+l}.
	\end{equation} 
	The function is well-defined on $\Xi^{\prime}_{k,l}.$ Specifically,  
	$\cF^V_{k,l}: \Xi^{\prime}_{k,l} \subset \intsigma^k\times(\partial\Sigma)^{l}\rightarrow \mathbb{R}$, 
	\begin{eqnarray}\label{eq:def_red_f}
		\cF^V_{k,l}(\xi_1,\cdots,\xi_{k+l})&=&\sum_{i=1}^{k+l}\varrho^2(\xi_i) R^g(\xi_i)+\sum^{k+l} _{ 
			\begin{array}{l}
				i,j=1\\
				i\neq j
		\end{array} }  \varrho(\xi_i)\varrho(\xi_j) G^g(\xi_i,\xi_j)\\
		&&+  \sum_{i=1}^{k+l} 2\varrho(\xi_i)\log V(\xi_i),\nonumber
	\end{eqnarray}
	where $R^g$ is the Robin's function and $G^g(\cdot,\xi)$ is the Green's function (for details, refer to Section~\ref{prelim}). 
	\begin{theorem}~\label{main_thm}
		Given  $m\in \mathbb{N}_+, k,l \in \N$ with $m=2k+l$, if $ K\subset\subset \Xi_{k,l}^{\prime} $  is a $C^1$-stable critical point set of $ \cF^V_{k,l}$, then there exists $\varepsilon_0>0$ such that for any $\varepsilon\in (0,\varepsilon_0)$
		a family of blow-up solutions $u_{\varepsilon}$ of~\eqref{eq:main_eq} with $\lambda_{\varepsilon}\rightarrow  4\pi m$  can be constructed.  Furthermore, solutions $u_\varepsilon$ blow up precisely at points $\xi_{1}, \cdots, \xi_{k+l}$ with $ \xi=\left(\xi_{1}, \cdots, \xi_{k+l}\right)$ in $ K,$ (up to a subsequence) as $\varepsilon\rightarrow 0$
		\begin{equation*}
			\frac{\lambda_\varepsilon V e^{u_\varepsilon}} { \int_{\Sigma} V e^{u_\varepsilon}  \, dv_g} \rightarrow \sum_{ i=1}^k 8\pi \delta_{\xi_i}+ \sum_{i=k+1}^{k+l} 4\pi \delta_{\xi_i}, 
		\end{equation*}
		which is convergent as measures on $\Sigma$. 
	\end{theorem}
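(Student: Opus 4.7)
The plan is to carry out a Lyapunov--Schmidt finite-dimensional reduction following the scheme developed in~\cite{pino_collapsing_2006, Esposito2005, HBA2024} and adapted to the present Riemann surface with boundary setting. Working in isothermal coordinates near each $\xi_i$, for a small parameter $\varepsilon>0$ and a configuration $\xi=(\xi_1,\ldots,\xi_{k+l})\in\Xi_{k,l}'$, I would first build an approximate solution $W_\xi$ of~\eqref{eq:main_eq} by gluing $k$ standard Liouville bubbles of mass $8\pi$ at the interior points to $l$ reflected half-bubbles of mass $4\pi$ at the boundary points, then correcting by a suitable linear combination of the Green's functions $G^g(\cdot,\xi_i)$ so as to enforce the Neumann condition, the zero-average condition, and the linear term $\beta u$. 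The scaling parameters of each bubble would be chosen so that the local masses are $\varrho(\xi_i)\in\{8\pi,4\pi\}$, giving the total $4\pi m$, while the factor $V$ would enter through an adjustment of each bubble height by $\log V(\xi_i)$, which is where the positivity condition $\xi_i\in\Sigma'$ is used.

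Writing an actual solution as $u=W_\xi+\phi$, I would then linearize~\eqref{eq:main_eq} around $W_\xi$ and study the resulting operator $L_\xi$ in a suitable weighted $L^\infty$ space. The approximate kernel $K_\xi$ of $L_\xi$ is spanned, up to small corrections, by the transplanted scaling and translation modes of the Liouville problem on $\R^2$ for the interior bubbles and on the half-plane for the boundary ones. A uniform invertibility estimate of $L_\xi$ on the orthogonal complement $K_\xi^{\perp}$, combined with sharp control of the error $R_\xi:=-\Delta_g W_\xi+\beta W_\xi-\lambda(V e^{W_\xi}/\int_\Sigma V e^{W_\xi}\,dv_g-1)$, would allow a contraction argument to produce, for each $\xi$ in a fixed neighborhood of $K$ and each small $\varepsilon$, a unique correction $\phi=\phi_\varepsilon(\xi)$ of size $O(\varepsilon|\log\varepsilon|^a)$ solving the equation projected onto $K_\xi^{\perp}$, with $C^1$ dependence on $\xi$.

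The remaining step is the variational reduction: I would substitute $u_\varepsilon(\xi)=W_\xi+\phi_\varepsilon(\xi)$ into the energy functional $J_\varepsilon$ naturally associated with~\eqref{eq:main_eq} (with $\lambda=\lambda_\varepsilon\to 4\pi m$ chosen in terms of $\varepsilon$) and establish a $C^1$-expansion of the reduced functional $\cJ_\varepsilon(\xi):=J_\varepsilon(u_\varepsilon(\xi))$ of the form
\begin{equation*}
\cJ_\varepsilon(\xi)=c_0(\varepsilon)+c_1(\varepsilon)\,\cF^V_{k,l}(\xi)+o(\varepsilon)\qquad\text{in }C^1_{\mathrm{loc}}(\Xi_{k,l}'),
\end{equation*}
with explicit constants $c_0(\varepsilon),c_1(\varepsilon)$ satisfying $c_1(\varepsilon)\neq 0$. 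A standard argument would show that $u_\varepsilon(\xi)$ solves~\eqref{eq:main_eq} if and only if $\xi$ is a critical point of $\cJ_\varepsilon$, so the $C^1$-stability of $K$ would yield a critical point $\xi_\varepsilon$ of $\cJ_\varepsilon$ in any prescribed neighborhood of $K$ for all small $\varepsilon$, producing the desired blow-up solutions. The measure-convergence statement follows immediately from the construction, since each (half-)bubble concentrates to the corresponding $\xi_i$ with weight $\varrho(\xi_i)$.

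The main obstacle will be securing the $C^1$-expansion of $\cJ_\varepsilon$ and identifying the leading term as $\cF^V_{k,l}$. This requires a careful analysis of the interaction between interior and boundary bubbles through the Green's function, precise boundary asymptotics of $G^g$ and $R^g$ so that the geodesic-curvature contributions absorbed into $R^g$ appear correctly as the Robin-function terms in~\eqref{eq:def_red_f}, uniform $C^1$ control of $\phi_\varepsilon$ in $\xi$ over compact subsets of $\Xi_{k,l}'$, and bookkeeping for both the linear perturbation $\beta u$ (absent in~\cite{HBA2024}) and the possible vanishing of $V$ at the singular points $q_i$ needed to cover~\eqref{eq:singular}. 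Once these estimates are in hand, the rest of the argument is the by-now standard translation of the $C^1$-stability hypothesis into the existence of a genuine solution.
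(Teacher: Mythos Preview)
Your proposal is correct and follows essentially the same Lyapunov--Schmidt scheme as the paper. The only noteworthy implementation differences are that the paper (i) first passes to the auxiliary problem $(-\Delta_g+\beta)u=\varepsilon^2 Ve^u-\overline{\varepsilon^2 Ve^u}$ and recovers $\lambda_\varepsilon=\varepsilon^2\int_\Sigma Ve^{u_\varepsilon}\,dv_g$ a posteriori, and (ii) carries out the reduction in the Hilbert space $\oH$ (as in~\cite{Esposito2005}) rather than in a weighted $L^\infty$ framework, obtaining $\|\phi^\varepsilon_\xi\|\le R\,\varepsilon^{(2-p)/p}|\log\varepsilon|$ for $p\in(1,6/5)$ and the expansion $\widetilde E_\varepsilon(\xi)=4\pi m(3\log 2-2)-8\pi m\log\varepsilon-\tfrac12\cF^V_{k,l}(\xi)+o(1)$ in $C^1$; otherwise the construction of the ansatz, the invertibility of the linearized operator, the contraction argument, and the use of $C^1$-stability of $K$ proceed exactly as you outline.
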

	Theorem~\ref{main_thm} indicates that for any given $k, l \in \mathbb{N}$ satisfying $2k + l = m$, we can construct a family of blow-up solutions that blow up at a stable critical point of $\cF^V_{k,l}$. Clearly, for different  $(k,l)$, the blow-up solutions are distinct, as they blow up at different points. Based on this observation, we immediately obtain the following corollary regarding the multiplicity of blow-up solutions: 
	\begin{corollary}
		Under the same assumptions as Theorem~\ref{main_thm}, for $m \in \mathbb{N}_+$, there exist at least $1 + \lfloor m/2 \rfloor$ distinct families of blow-up solutions to \eqref{eq:main_eq} as $\lambda \to 4\pi m$, where $\lfloor m/2 \rfloor$ denotes the largest integer less than or equal to $m/2$.
	\end{corollary}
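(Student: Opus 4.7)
The plan is to enumerate all admissible decompositions $m=2k+l$ with $k,l\in\mathbb{N}$, apply Theorem~\ref{main_thm} to each, and then distinguish the resulting families of blow-up solutions by looking at their weak limits as measures. The admissible pairs are precisely
\begin{equation*}
(k,l)\in\bigl\{(0,m),\,(1,m-2),\,(2,m-4),\,\ldots,\,(\lfloor m/2\rfloor,\,m-2\lfloor m/2\rfloor)\bigr\},
\end{equation*}
so there are exactly $1+\lfloor m/2\rfloor$ of them. For each such pair $(k,l)$, the assumption of the corollary provides a $C^1$-stable critical set $K_{k,l}\subset\subset\Xi_{k,l}'$ of $\cF^V_{k,l}$, and Theorem~\ref{main_thm} yields a family $(u_\varepsilon^{(k,l)})$ of solutions of \eqref{eq:main_eq} with $\lambda_\varepsilon\to 4\pi m$ that concentrates at $k$ interior and $l$ boundary points.

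The second step is to verify pairwise distinctness of the families. Given two admissible pairs $(k_1,l_1)\neq(k_2,l_2)$ with $2k_i+l_i=m$, Theorem~\ref{main_thm} ensures that, up to subsequences,
\begin{equation*}
\frac{\lambda_\varepsilon V e^{u_\varepsilon^{(k_i,l_i)}}}{\int_\Sigma V e^{u_\varepsilon^{(k_i,l_i)}}\,dv_g}\;\rightharpoonup\;\mu_i:=\sum_{j=1}^{k_i}8\pi\delta_{\xi_j^{(i)}}+\sum_{j=k_i+1}^{k_i+l_i}4\pi\delta_{\xi_j^{(i)}}
\end{equation*}
as measures on $\Sigma$, with the first $k_i$ concentration points lying in $\intsigma$ and the remaining $l_i$ on $\partial\Sigma$. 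Hence the interior mass of $\mu_i$ equals $8\pi k_i$ while its boundary mass equals $4\pi l_i$. Since $k_1\neq k_2$, these weak limits disagree, so the two families cannot coincide for small enough $\varepsilon$; in particular, no single sequence of solutions can simultaneously serve both decompositions.

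Combining the existence for each of the $1+\lfloor m/2\rfloor$ admissible pairs with this distinctness argument yields the claimed lower bound. There is essentially no serious obstacle here once Theorem~\ref{main_thm} is available: the whole argument is a counting-plus-weak-convergence statement, and the only point that requires a moment of care is observing that interior and boundary Dirac masses carry different weights ($8\pi$ vs.\ $4\pi$), so that the pair $(k,l)$ is fully recovered from the limit measure.
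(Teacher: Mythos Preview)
Your proposal is correct and follows the same approach as the paper, which does not give a formal proof but simply remarks before the corollary that the admissible pairs $(k,l)$ with $2k+l=m$ number $1+\lfloor m/2\rfloor$ and that the resulting families are distinct because they blow up at different points. Your argument is in fact slightly more careful than the paper's, since you explicitly recover $(k,l)$ from the interior and boundary masses of the limiting measure rather than just asserting that the blow-up points differ.
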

	
	Define the set of global minimum points  of $\cF^V_{k,l}$ as follows:
	\begin{equation}
		\label{eq:k_min} \cK_{k,l}:=\left\{x\in\Xi_{k,l}^{\prime}: \cF^V_{k,l}(\xi)=\inf_{\Xi_{k,l}^{\prime}}\cF^V_{k,l}\right\}.
	\end{equation}
	\begin{corollary}\label{cor:mf}
		Given  $m\in \mathbb{N}_+, k,l \in \N$ with $m=2k+l$, suppose that $\cK_{k,l}\neq \emptyset$. Then, the conclusions in Theorem~\ref{main_thm} hold. Furthermore,
		$u_{\varepsilon}$ has  $k$ local maximum points $\xi^\varepsilon_{i}$ in $\intsigma$ for $i=1, \cdots, k$ and $l$ local maximum points $\xi^\varepsilon_{i}$  restricted to the boundary $\partial\Sigma $ for $i=k+1,\cdots, k+l $ such that up to a subsequence $(\xi^\varepsilon_{1},\cdots,\xi^\varepsilon_{k+l})$ converges to $\xi:=(\xi_1,\cdots,\xi_{k+l})\in \cK_{k,l} $ with\[ \lim_{\varepsilon \rightarrow 0} \cF^V_{k,l}(\xi^\varepsilon_{1},\cdots,\xi^\varepsilon_{k+l})= \min_{\Xi_{k,l}^{\prime}}\cF^V_{k,l}= \cF^V_{k,l}(\xi).\]
	\end{corollary}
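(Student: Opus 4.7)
The plan is to apply Theorem~\ref{main_thm} with the critical set $K=\cK_{k,l}$, which requires first showing $\cK_{k,l}$ is a compact $C^1$-stable critical set of $\cF^V_{k,l}$, and then to extract the additional local-maximum description from the Lyapunov--Schmidt ansatz underlying Theorem~\ref{main_thm}.

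Compactness of $\cK_{k,l}$ follows from the singular behavior of $\cF^V_{k,l}$ at the topological boundary of $\Xi_{k,l}^{\prime}$. Set $m_0:=\cF^V_{k,l}(\xi^*)$ for any $\xi^*\in\cK_{k,l}$, which is finite by the nonemptiness hypothesis. For any sequence $\xi^n\in\cK_{k,l}$, I would pass to a subsequence with $\xi^n\to\xi^\infty$ in $\Sigma^{k+l}$ and rule out $\xi^\infty\notin\Xi_{k,l}^{\prime}$ case by case: (i) on the thick diagonal $\F_{k,l}(\Sigma)$, the off-diagonal Green's term $\varrho(\xi^n_i)\varrho(\xi^n_j)G^g(\xi^n_i,\xi^n_j)\to+\infty$ by the standard logarithmic singularity of $G^g$; (ii) if some component lands on a zero of $V$, then $2\varrho(\xi^n_i)\log V(\xi^n_i)\to-\infty$; (iii) if an interior component $\xi^n_i$ with $i\leq k$ approaches $\partial\Sigma$, the reflected pole of the Neumann Green's function forces $R^g(\xi^n_i)\to+\infty$, so the Robin-function term diverges. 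In each case the sign is inconsistent with $\cF^V_{k,l}(\xi^n)\equiv m_0$, so $\xi^\infty\in\Xi_{k,l}^{\prime}$ and $\cK_{k,l}$ is a compact subset of $\Xi_{k,l}^{\prime}$.

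$C^1$-stability is then a routine minimum-under-perturbation argument: for any closed neighborhood $U$ of $\cK_{k,l}$ in $\Xi_{k,l}^{\prime}$ with compact closure, compactness of $\partial U$ together with the strict inequality $\cF^V_{k,l}>m_0$ on $\overline U\setminus\cK_{k,l}$ yields $\delta>0$ with $\min_{\partial U}\cF^V_{k,l}\geq m_0+2\delta$; any $G$ with $\|G-\cF^V_{k,l}\|_{C^1(U)}<\delta$ then satisfies $G\geq m_0+\delta$ on $\partial U$ while $G\leq m_0+\delta$ on $\cK_{k,l}$, so $G$ attains its minimum on $\overline U$ at an interior critical point. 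Invoking Theorem~\ref{main_thm}, I obtain a family of blow-up solutions $u_\varepsilon$ whose ansatz concentration points $\tilde\xi^\varepsilon=(\tilde\xi_1^\varepsilon,\ldots,\tilde\xi_{k+l}^\varepsilon)\in U$ converge, along a subsequence, to some $\xi=(\xi_1,\ldots,\xi_{k+l})\in\cK_{k,l}$, and the measure-convergence assertion of Theorem~\ref{main_thm} transfers directly.

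It remains to pass from the ansatz centers $\tilde\xi_i^\varepsilon$ to genuine local maxima $\xi_i^\varepsilon$ of $u_\varepsilon$. The Lyapunov--Schmidt ansatz writes $u_\varepsilon$ as a superposition of Liouville-type bubbles $\Wc_i^\varepsilon$ peaked at $\tilde\xi_i^\varepsilon$ plus a uniformly small remainder: for $i\leq k$ the standard radial profile with interior maximum at $\tilde\xi_i^\varepsilon$, and for $i>k$ its reflection-symmetric analogue in boundary normal coordinates, so that $\Wc_i^\varepsilon|_{\partial\Sigma}$ is maximized at $\tilde\xi_i^\varepsilon$. Since the $\tilde\xi_i^\varepsilon$ remain mutually separated (they lie in $\Xi_{k,l}^{\prime}$) and each bubble sharpens as $\varepsilon\to 0$, the dominant bubble forces $u_\varepsilon$ to attain a local maximum $\xi_i^\varepsilon$ within a vanishing neighborhood of $\tilde\xi_i^\varepsilon$, interior for $i\leq k$ and along $\partial\Sigma$ for $i>k$. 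Hence $\xi_i^\varepsilon\to\xi_i$ and continuity of $\cF^V_{k,l}$ at $\xi$ gives $\cF^V_{k,l}(\xi^\varepsilon)\to m_0=\cF^V_{k,l}(\xi)$. The main obstacle is the compactness step: one must simultaneously control all three degenerations at $\partial\Xi_{k,l}^{\prime}$, and in particular invoke the divergence of the Neumann Robin's function near $\partial\Sigma$, a boundary-specific feature absent from the closed-surface results of \cite{Bartolucci2020,Esposito2014singular,figueroa2022bubbling}.
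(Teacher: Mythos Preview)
Your route is the paper's: show $\cK_{k,l}$ is a compact $C^1$-stable critical set, invoke Theorem~\ref{main_thm}, then read off the local maxima from the bubble ansatz. Two points where the paper differs in detail.

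\emph{Compactness.} Your case-by-case argument is fine when only one degeneration occurs, but---as you yourself flag at the end---it does not handle the interaction: when $V$ has zeros, case~(ii) pushes $\cF^V_{k,l}\to-\infty$ while (i)/(iii) push it to $+\infty$, and these can happen simultaneously. The paper's remark immediately after Corollary~\ref{cor:mf} records exactly this indeterminacy. The paper's own proof does not resolve it either; it simply asserts ``there exists $\delta>0$ such that $\cK_{k,l}\subset\subset M_\delta$'' and moves on. So you are being more explicit than the paper about where the difficulty lies, though neither argument closes the gap when $V$ vanishes somewhere.

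\emph{Local maxima.} The paper makes your ``bubble sharpens'' argument quantitative via Lemma~\ref{lemb1}: the ansatz expansion gives
\[
u_\varepsilon = -2\sum_{i=1}^{k+l}\chi\bigl(|y_{\xi_i^\varepsilon}(x)|/r_0\bigr)\log\bigl(\varepsilon^2\tau_i^2+|y_{\xi_i^\varepsilon}(x)|^2\bigr)+O(1),
\]
from which one reads off $u_\varepsilon\le C+2\log\frac{1}{\varepsilon\tau_i}$ on $|y_{\xi_i^\varepsilon}|\ge(\varepsilon\tau_i)^{1/2}$ but $u_\varepsilon\ge -C+4\log\frac{1}{\varepsilon\tau_i}$ on $|y_{\xi_i^\varepsilon}|<\varepsilon^2\tau_i$. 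Since $4\log(1/(\varepsilon\tau_i))$ dominates $2\log(1/(\varepsilon\tau_i))$ for small $\varepsilon$, the maximum over $U_{r_0}(\xi_i^\varepsilon)$ must lie in the inner ball $|y_{\xi_i^\varepsilon}|<(\varepsilon\tau_i)^{1/2}$. This is exactly the mechanism you describe, but the explicit pointwise inequality from Lemma~\ref{lemb1} is what pins the maximum to a shrinking neighborhood without further work on the remainder $\phi^\varepsilon_\xi$.
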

		If the zero set of $V$ is empty, the behavior of $\cF^V_{k,l}$ as $\xi$ approaches $\partial \Xi^{\prime}_{k,l}$ results in its divergence towards $+\infty$ (as in Lemma~\ref{H_INFTY}). This divergence suggests the presence of at least one global minimum point in the interior of $\Xi^{\prime}_{k,l}$. Additionally, a local minimum point is inherently ``stable''. Consequently, we have the following corollary:
		\begin{corollary}
			Given  $m\in \mathbb{N}_+, k,l \in \N$ with $m=2k+l$, if $V$ is a positive function, then there exists $\varepsilon_{0}>0$ such that   for $\varepsilon\in (0,\varepsilon_0)$ a family of blow-up solutions $u_{\varepsilon}$ of~\eqref{eq:main_eq} with $\lambda_{\varepsilon}\rightarrow 4\pi m$ can be constructed. Moreover, $u_\varepsilon$ satisfied the all conclusions in Corollary~\ref{cor:mf}.  
		\end{corollary}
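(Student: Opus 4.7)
\begin{altproof}{the corollary}
The plan is to reduce this statement to Corollary~\ref{cor:mf} by verifying that, under the positivity of $V$, the global minimum set $\cK_{k,l}$ is nonempty. First, I would observe that if $V>0$ on $\Sigma$, then $\Sigma'=\Sigma$, so $\Xi'_{k,l}=\Xi_{k,l}$ is an open relatively compact subset of $\intsigma^k\times(\partial\Sigma)^l$ whose boundary consists precisely of the thick diagonal $\F_{k,l}(\Sigma)$ (points where two of the $\xi_i$ collide) together with the configurations where some interior $\xi_i$ ($1\le i\le k$) approaches $\partial\Sigma$.

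Next, I would invoke Lemma~\ref{H_INFTY} (stated earlier in the excerpt) to conclude that $\cF^V_{k,l}(\xi)\to+\infty$ as $\xi\to\partial\Xi'_{k,l}$. Because $V$ is bounded away from $0$ on the compact set $\Sigma$, the term $\sum 2\varrho(\xi_i)\log V(\xi_i)$ is uniformly bounded, and both the Robin function term $\sum\varrho^2(\xi_i)R^g(\xi_i)$ and the regular cross terms $\varrho(\xi_i)\varrho(\xi_j)G^g(\xi_i,\xi_j)$ for $i\ne j$ are bounded on the closure of $\Xi_{k,l}$ except that the Green's function contributions blow up to $+\infty$ whenever two points $\xi_i,\xi_j$ collide (the Green's function has a $-\log$ singularity with the correct sign for the coercivity). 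Similarly, a boundary component forces an interior point to degenerate into a boundary situation, and Lemma~\ref{H_INFTY} handles this degeneration. Coercivity on $\Xi_{k,l}$ together with continuity then yields that the infimum is attained at some $\xi^\ast\in\Xi_{k,l}$, i.e.\ $\cK_{k,l}\neq\emptyset$.

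Having established $\cK_{k,l}\neq\emptyset$, the point $\xi^\ast\in\cK_{k,l}$ is in particular a local minimum of $\cF^V_{k,l}$. I would then verify that any such local minimum yields a $C^1$-stable critical set: choose a small closed neighborhood $U\subset\Xi'_{k,l}$ of $\xi^\ast$ on whose boundary $\cF^V_{k,l}>\cF^V_{k,l}(\xi^\ast)+2\delta$ for some $\delta>0$; then for any $G\in C^1(U)$ with $\|\cF^V_{k,l}-G\|_{C^1(U)}<\delta$, the minimum of $G$ on the compact set $U$ is attained in the interior of $U$, hence is a critical point of $G$ lying in $U$. This is exactly the $C^1$-stability condition from Definition~1.1. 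Taking $K=\{\xi^\ast\}$ (or $K=\cK_{k,l}$) and applying Theorem~\ref{main_thm} and Corollary~\ref{cor:mf} yields the desired family of blow-up solutions with the stated concentration behavior.

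The main obstacle in this proof is the coercivity statement, i.e.\ Lemma~\ref{H_INFTY}, which must correctly balance the diagonal singularities of the Green's function with the Robin function contributions to ensure divergence of $\cF^V_{k,l}$ at every boundary stratum of $\Xi_{k,l}$ (interior--interior collisions, boundary--boundary collisions, and interior-to-boundary approaches). Once this coercivity is in hand, the rest of the argument is a short topological reduction to Corollary~\ref{cor:mf}.
\end{altproof}
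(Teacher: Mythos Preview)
Your proposal is correct and follows essentially the same route as the paper: the paper simply notes that when $V>0$ Lemma~\ref{H_INFTY} gives $\cF^V_{k,l}\to+\infty$ at $\partial\Xi'_{k,l}$, so a global minimum exists (hence $\cK_{k,l}\neq\emptyset$), and then Corollary~\ref{cor:mf} applies. Your extra verification of $C^1$-stability of the minimum is fine but not strictly needed, since Corollary~\ref{cor:mf} already absorbs that step once $\cK_{k,l}\neq\emptyset$.
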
 
		\begin{remark}
			\begin{itemize}
				\item 	When $V(q)=0$ for some $q\in \Sigma$, a complication arises. As $\xi$ approaches $\partial \Xi^{\prime}_{k,l}$, there are cases where the sum of the first terms tends to $+\infty$ while the last term approaches $-\infty$, leading to an indeterminate behavior of $\cF^V_{k,l}$. 
				\item  It is observed that the constructed blow-up points in Theorem~\ref{main_thm} do not coincide with the zero set of $V$. Due to the high singularity of this problem,  constructing blow-up solutions that blow up at a singular point of mean field equation~\eqref{eq:main_eq}, i.e. $q\in \{x\in \Sigma: V(x)=0\}$,  remains a challenging open problem.
			\end{itemize}
		\end{remark}

	\section{Preliminaries}\label{prelim} 
	Throughout this paper, we use the terms ``sequence" and ``subsequence" interchangeably, as the distinction is not crucial for the context of our analysis. The constant denoted by $C$ in our deduction may assume different values across various equations or even within different lines of equations. 
	We also denote $B_r(y)=\{y\in\R^2: |y|<r\}$ and $A_r(y):=B_{2r}(y)\setminus B_r(y).$ For any $\xi\in \Sigma$ we also denote that 
	$\varrho(\xi)$ is  
	$8\pi$ if $\xi\in \intsigma$ and equals 	$4\pi$  if $ \xi\in \partial \Sigma$. 
	
	To construct the ansatz for solutions of problem~\eqref{eq:main_eq}, we firstly introduce a family of isothermal coordinates (see \cite{chern1955,Esposito2014singular,yang2021125440}, for instance). For any $\xi\in\intsigma$,  there exists an isothermal coordinate system $\left(U(\xi), y_{\xi}\right)$ such that $y_{\xi}$  maps an open  neighborhood $U(\xi)$ around $\xi$  onto an open disk $B^{\xi}$ with radius $2r_{\xi}$ and $y_\xi(\xi)=(0,0)$,  in which the Riemann metric has the form as follows: 
	$$g=\sum_{i=1}^2 e^{\hat{\varphi}_\xi(y_{\xi}(x))}  \mathrm{d} x^i \otimes \mathrm{d} x^i.$$ 
	Similarly, 
	for $\xi\in \partial\Sigma$ there exists an isothermal coordinate system $\left(U(\xi), y_{\xi}\right)$ around $\xi$ such that the image of $y_{\xi}$ is a half disk $B^{\xi}:=\{ y=(y_1,y_2)\in \R^2: |y|<2r_{\xi}, y_2\geq 0\}$ with a radius $2r_{\xi}$, $y_\xi(\xi)=(0,0)$ and   $y_{\xi}\left(U(\xi)\cap \partial \Sigma\right)= \{ y=(y_1,y_2)\in \R^2: |y|<2r_{\xi}, y_2=0 \}$,  in which  the Riemann metric has the form as follows:  $$g=\sum_{i=1}^2 e^{\hat{\varphi}_\xi(y_{\xi}(x))}  \mathrm{d} x^i \otimes \mathrm{d} x^i.$$ 
	
	Let 
	$K_g$ be the Gaussian curvature of $\Sigma$ and $k_g$ be the geodesic curvature of the boundary $ \partial\Sigma$. Then,  for $\xi\in \Sigma$
	\begin{equation}
		\label{eq:Gauss}
		-\Delta \hat{\varphi}_\xi(y) = 2K_g\big(y^{-1}_\xi(y)\big) e^{\hat{\varphi}_\xi(y)} \quad\text{for all } y\in B^\xi. 
	\end{equation}
	and for $\xi\in \partial\Sigma$,
	\begin{equation}\label{eq:b_restrict}
		\frac{\partial}{\partial y_2}  \hat{\varphi}_{\xi}(y) =- 2k_g(y_{\xi}^{-1}(y)) e^{ \frac{\hat{\varphi}_{\xi}(y)}{2}} \quad \text{ for all } y\in B^{\xi}\cap \{ y_2=0\}.
	\end{equation} 
	For   $\xi\in \Sigma$ and $0<r\le 2r_\xi$  we set 
	\[
	B_r^\xi := B^\xi \cap \{ y\in\R^2: |y|< r\}\quad \text{and}\quad U_{r}(\xi):=y_\xi^{-1}(B_{r}^{\xi}).
	\]
	Both $y_\xi$ and $\hat{\varphi}_\xi$ are assumed to depend smoothly on $\xi$ as in~\cite{Esposito2014singular,figueroa2022bubbling} for closed surfaces. With a slight modification, we can assume the smooth dependence of $\xi$ for Riemann surfaces with boundary.  Moreover, we can assume   $\hat{\varphi}_{\xi}(0,0)=0$ and $\nabla \hat{\varphi}_{\xi}(0,0)=\begin{cases}
		(0,0) & \text{ for } \xi\in\intsigma\\
		(0, -2k_g(\xi)) & \text{ for }\xi\in \partial\Sigma
	\end{cases}.$ 
	As in~\cite{yang2021125440},  the Neumann boundary conditions preserved by the isothermal coordinates  in following sense:  for any $\xi\in \partial\Sigma$ and  $x\in y_{\xi}^{-1}\left( B^{\xi} \cap \partial \mathbb{R}^{2}_+\right)$, we have
	\begin{equation}\label{eq:out_normal_derivatives}
		\left(y_{\xi}\right)_*(\nu_g(x))=\left. -e^{ -\frac{\hat{\varphi}_{\xi}(y)}2} \frac {\partial} { \partial y_2 }\right|_{	y=y_{\xi}(x)}.
	\end{equation}
	We define the cut-off function $\chi_\xi\in C^\infty(\Sigma,[0,1])$ by 
	\begin{equation}
		\label{eq:cut_off_i} \chi_\xi(x) =	\left\{ \begin{array}{ll}
			\chi\big(\frac{|y_\xi(x)|}{r_0}\big) &\quad \text{if $x\in U(\xi)$}\\
			0&\quad \text{if $x\in \Sigma\setminus U(\xi)$}
		\end{array}  \right.,
	\end{equation}
	where $r_0\in (0, \frac  1 2  r_{\xi}] $ which will be selected later. 
	The Robin's function is defined as follows: 
	\[ R^g(\zeta):=\lim_{x\to\zeta}\left(G^g(x,\zeta)+\frac4{\varrho(\zeta)}\log d_g(x,\zeta)\right). \]
	Observe that for $\zeta\in U(\xi)$, 
	$
	\lim_{x\to\zeta}\frac{d_g(x,\zeta)}{\big|y_\xi(x)-y_\xi(\zeta)\big|} = e^{\frac12
		\hat{\varphi}_\xi\circ y_\xi(\zeta)}.
	$
	It follows  
	\begin{equation}\label{eq:robin}
		R^g(\zeta) = \lim_{x\to\zeta}\left(G^g(x,\zeta)+\frac4{\varrho(\zeta)}\log\big|y_\xi(x)-y_\xi(\zeta)\big|\right)
		+ \frac 2 {\varrho(\zeta)}\hat{\varphi}_\xi\big(y_\xi(\zeta)\big).
	\end{equation}
	In particular, using the assumption  $\hat{\varphi}_\xi\big(y_\xi(\xi)\big)=\hat{\varphi}_\xi(0,0)=0$, we obtain that 
	$$
	R^g(\xi) = \lim_{x\to\xi}\left(G^g(x,\xi)+\frac4{\varrho(\xi)}\log\big|y_\xi(x)\big|\right) . 
	$$
	Let the function
	\begin{equation*}
		\Gamma^g_{\xi}(x)=\Gamma^g(x,\xi)= \left\{ \begin{array}{ll}	\frac{1}{2\pi} \chi_{\xi}(x)\log\frac{1}{|y_{\xi}(x)|}& \text{ if }\xi\in \intsigma \\	{\frac{1}{\pi}}\chi_{\xi}(x)\log\frac{1}{|y_{\xi}(x)|}	& \text{ if } \xi\in\partial\Sigma \end{array}\right..
	\end{equation*}  
	Decomposing the Green's function  
	$ G^g(x,\xi)= \Gamma^g_{\xi}(x)+ H^g_{\xi}(x),$ we have the function $H^g_{\xi}(x):= H^g(x,\xi)$ that solves the following equations: 
	\begin{equation}\label{eqR}
		\left\{\begin{array}{ccll}
			-\Delta_g H^g_\xi+\beta H^g_\xi
			&=&-\beta\frac{4}{\varrho(\xi)}\chi_{\xi} \log \frac 1 {|y_{\xi}|} 
			+ \frac{4}{\varrho(\xi)} (\Delta_g \chi_{\xi}) \log \frac 1 {|y_{\xi}|}& \\
			&&+ \frac{8}{\varrho(\xi)} \la\nabla\chi_{\xi}, \nabla\log \frac 1 {|y_{\xi}|}\ra_g 	-\frac 1 {|\Sigma|_g}, & \text{ in }\intsigma\\
			\partial_{ \nu_g} H^g_\xi&=&- \frac{4}{\varrho(\xi)}(\partial_{ \nu_g} \chi_{\xi}) \log\frac{1}{|y_{\xi}|}-\frac{4}{\varrho(\xi)}\chi_{\xi}\partial_{ \nu_g}  \log\frac{1}{|y_{\xi}|}, &\text{ on }\partial \Sigma\\
			\int_{\Sigma} H^g_\xi \, dv_g&=& - \frac{4}{\varrho(\xi)}\int_{\Sigma}  \chi_{\xi}\log\frac{1}{|y_{\xi}|} \, dv_g &
		\end{array}\right..\end{equation}
	By the regularity of elliptic equations (see Lemma~\ref{lem:re_green}), there is a unique  solution  $H^g(x,\xi)$ that solves \eqref{eqR} in $C^{1,\alpha}(\Sigma)$ for $\alpha\in (0,1)$. 
	$H^g(x,\xi)$ is the regular part of  $G^g(x,\xi)$. 
	It is clear that $R^g(\xi)=H^g(\xi,\xi)$ and $H^g(\xi,\xi)$ is independent of the choice of the cut-off function $\chi$ and the local chart. 
	For $\delta>0$, we consider 
	\begin{eqnarray}\label{eq:def_M_delta}
		{M}_{\delta}:=\left\{
		\begin{array}{ll}
			\xi\in \Xi^\prime_{k,l}:&  d_g( \xi_i, \partial\Sigma) \geq \delta \text{ for } i=1,\cdots,k; \\
			&
			d_g(\xi_i,\xi_j)\geq \delta \text{ for   } i\neq j; V(\xi_i)\geq \delta \text{ for } i=1,\cdots, k+l 
		\end{array} \right\},
	\end{eqnarray} a compact subset $ \Xi^\prime_{k,l}$, where 
	$d_g (\cdot,\cdot):\Sigma \times\Sigma \rightarrow \mathbb{R} $ is the geodesic distance with respect to metric $g$ and $d_g(p, \partial \Sigma):=\inf_{q\in \partial \Sigma} d_g(p,q)$ for any $p\in \intsigma$. 
	We observe that for any $\alpha\in (0,1)$,  $G^g(x, \xi) \in C^{\infty}(\Sigma\setminus \{ \xi \})$ and $H^g(x,\xi)$  is $C^{1,\alpha}(\Sigma)$, too. Thus, $ \cF^V_{k,l}$ is $C^{1,\alpha}(M_{\delta})$ for any fixed $\delta>0$.

	To study the blow-up solutions of $\eqref{eq:main_eq}$, we consider the weak solution of the following problem in the space $\oH:=\left\{ u\in H^1(\Sigma): \int_{\Sigma} u \, dv_g=0\right\},$
	\begin{equation}~\label{rho_2}
		\begin{cases}
			(-\Delta_g+\beta) u= \varepsilon^2V  e^u  -\overline{\varepsilon^2 V e^u}& \quad \text{ in }\intsigma\\
			\partial_{\nu_g} u=0& \quad\text{ on } \partial\Sigma
		\end{cases}, 
	\end{equation}
	such that 
	$\varepsilon^2 Ve^u\rightarrow  \sum_{i=1}^{k+l} \varrho(\xi_i) \delta_{\xi_i},$
	convergent in a sense of measures on  $\Sigma $ as $\varepsilon\rightarrow 0$, for some 
	$ \xi=(\xi_1,\cdots,\xi_{k+l})\in \Xi^\prime_{k,l} .$
	If we take $\lambda=\varepsilon^2 \int_{\Sigma} Ve^{u} \, dv_g $, the weak solutions of~\eqref{rho_2} must be  the weak solutions of~\eqref{eq:main_eq}. So we try to construct a sequence of blow-up solutions of~\eqref{rho_2} as $\varepsilon\rightarrow 0$ and then pass back to the original  problem~\eqref{eq:main_eq} as $\lambda\rightarrow 4\pi m$. 
	
	It is well known that 
	$ u_{\tau,\eta}(y)=\log \frac{ 8\tau^2 }{(\tau^2\varepsilon^2 +|y-\eta|^2)^2}$ for $(\tau, \eta)\in (0,\infty)\times  \mathbb{R}^2 $ are all the solutions of the Liouville-type equations, 
	\begin{equation*}
		\begin{cases}
			-\Delta u= \varepsilon^2 e^u \quad \text{ in } \mathbb{R}^2,\\
			\int_{\mathbb{R}^2}  e^u <\infty. 
		\end{cases}
	\end{equation*}
	Our goal is to construct approximate solutions of~\eqref{rho_2} applying the pull-back of $u_{\tau,\eta}$ to $\Sigma$ by isothermal coordinates and selecting appropriate values for $\tau$ and $\xi$.
	Define 
	\[U_{\tau,\xi}(x) = u_{\tau,0}(y_{\xi}(x))= \log \frac{ 8\tau^2 }{( \tau^2\varepsilon^2+ |y_{\xi}(x)|^2 )}, \text{ for all } x\in U(\xi)\]
	and   $U_{\tau,\xi}(x)=0$ for all $x\in \Sigma \setminus U(\xi)$. 
	For any function $f\in L^1(\Sigma)$, we denote its average over $\Sigma$ as $\overline{f}=\frac{1}{|\Sigma|g}\int_{\Sigma} f \, dv_g$.
	Then, we introduce a projection operator $P$, which is used to project $U_{\tau,\xi}$ into the space $\oH$. The projected function $P U_{\tau,\xi}$ is defined as the solution to the problem:
	\begin{equation}~\label{eq:projection}
		\left\{\begin{array}{ll}
			(-\Delta_g+\beta) P U_{\tau,\xi}(x)= \varepsilon^2 \chi_{\xi}e^{-\varphi_{\xi}}e^{U_{\tau, \xi}} - \overline{\varepsilon^2 \chi_{\xi} e^{-\varphi_{\xi}}e^{U_{\tau, \xi}} }  , &  x\in \intsigma,\\
			\partial_{\nu_g} P U_{\tau,\xi}=0, &x\in \partial \Sigma, \\
			\int_{\Sigma} P U_{\tau, \xi} \, dv_g=0, &
		\end{array}\right. 
	\end{equation}
	For $\beta \neq 0$, the last condition of zero integral of $P U_{\tau, \xi}$ over $\Sigma$ can be inferred from the preceding equations via the divergence theorem. However, it is explicitly included to address the case when $\beta = 0$, ensuring the solution criteria for all $\beta\geq 0$. The solution of \eqref{eq:projection} is unique in $\oH$ and $PU_{\tau,\xi}$ in $C^{\infty}({\Sigma})$ as per regularity theory in Lemma~\ref{lem:schauder}, ensuring that $PU_{\tau,\xi}$ is well-defined. 
	
		Let $$\psi^0_{\tau,\eta}(y)=\frac{\partial}{\partial_{\tau} } u_{\tau,\eta}(x)= \frac{2}{\tau}  \frac{ |y-\eta|^2-\tau^2\varepsilon^2}{ |y-\eta|^2+\tau^2\varepsilon^2},$$ and 
	$$\psi^j_{\tau,\eta} (y)= \frac{\partial }{\partial \eta_j } u_{\tau,\eta}(x)=4\frac{y_j-\eta_j}{ \tau^2\varepsilon^2 +|y-\eta|^2}, $$ 
	for $ j=1,2.$
	It is observed that the derivatives above satisfy the equation:
	$-\Delta \psi= \varepsilon^2 e^{u_{\tau,\eta}}\psi  \text{  in }\mathbb{R}^2,$
	where $\psi=\psi^{j}_{\tau,\eta}, \text{ for }j=0,1,2$. 
	The function $\Psi^j_{\tau,\xi}$ is then defined as the pull-back of $\psi^j_{\tau,0}$ under the isothermal coordinate $y_{\xi}$, i.e.
	$\Psi^j_{\tau, \xi}(x)= \psi^j_{\tau,0}( y_{\xi}(x)),$
	for any $x\in y_{\xi}^{-1}(B^{\xi}_{2r_0}). $
	Let $P\Psi^j_{\tau,\xi}$ be a projection into $\overline{\mathrm{H}}^1$ of $\Psi^j_{\tau,\xi}$,  for $\xi\in \Sigma$ and $j=0,1,\cdots,\ii(\xi_i)$, where $\ii(\xi_i)\text{ equals }
	2 \text{ if } 1\leq i\leq k\text{ and } \text{ equals }
	1 \text{ if } k+1\leq i\leq m$. $P\Psi^j_{\tau,\xi}$ is defined as the solution of 
	\begin{equation}~\label{ppsi}
		\left\{\begin{array}{ll}
			(-\Delta_g+\beta) P \Psi^j_{\tau,\xi}= \varepsilon^2\chi_{\xi} e^{-\varphi_{\xi}} e^{U_{\tau, \xi}} \Psi^j_{\tau,\xi}- \overline{ \varepsilon^2 \chi_{\xi} e^{-\varphi_{\xi}} e^{U_{\tau, \xi}} \Psi^j_{\tau,\xi} }   , & x\in \intsigma,\\
			\partial_{\nu_g} P \Psi^j_{\tau,\xi}=0, &x\in \partial \Sigma, \\
			\int_{\Sigma} P \Psi^j_{\tau, \xi}=0.&
		\end{array}\right.
	\end{equation}
	By the regularity theory in Lemma~\ref{lem:schauder} the solution to problem~\eqref{ppsi} is unique and smooth on $\Sigma$. Hence, $P\Psi^{j}_{\tau,\xi}$ is well-defined and lies in the space $C^{\infty}(\Sigma)$.

	For any  $\xi=(\xi_1,\cdots,\xi_{k+l})\in {M}_{\delta}$, we can establish an isothermal chart around $y_{\xi_i}$ for each point $\xi_i$ for $i=1,\cdots, k+l $. Given the compactness of $\Sigma$, it is possible to select a uniform radius $r_{\xi_i}>0$ for any $\xi\in M_{\delta}$, denoted as $2r_0$. This radius is sufficiently small and depends only on $\delta$ and $\partial \Sigma$. Moreover, we ensure that $U_{4r_0}(\xi_i)\cap  U_{4r_0}(\xi_j)=\emptyset $ for any $i,j=1,\cdots, k+l $ with $i\neq j$ and $U_{4r_0}(\xi_i)\cap \partial\Sigma=\emptyset$ for $i=1,\cdots, k$. For any $i=1, \cdots, k+l$, we define the scaling parameter $\tau_i$ as:
	\begin{equation}~\label{tau}
		\tau_{i}(x)=\sqrt{\frac{1}{8} V(x){e}^{\varrho(\xi_i)H^g\left(x, \xi_{i}\right)+\sum_{j\neq i} \varrho(\xi_j)G^g\left(x, \xi_{j}\right)}}.
	\end{equation}
	For simplicity, we denote that $U_{i}=U_{\tau_{i}(\xi), \xi_{i}}$, $\chi_i=\chi_{\xi_i}$, $\varphi_i:=\varphi_{\xi_i}$, $\hat{\varphi}_i=\hat{\varphi}_{\xi_i}$ and $\tau_i=\tau_i(\xi_i)$. The formulation of the scaling parameter $\tau_i$ is chosen for technical considerations.

	We consider the manifold  for given  $k,l\in \N$ and a positive constant $\varepsilon>0$,
	\[ \mathcal{M}^{k,l}_{\varepsilon}:=\left\{  \sum_{i=1}^{k+l} PU_i: 
	\xi_i\in \intsigma \text{ for } i=1,\cdots,k  \text{ and } \xi_i \in \partial\Sigma \text{ for } i=k+1,\cdots, k+l\right\}. \] 
	The functions in manifold $\mathcal{M}^{k,l}_{\varepsilon}$ serve as approximate solutions of the problem~\eqref{rho_2}. 
	We then denote the projected function for any $i=1,\cdots, k+l $ and $j=0,\cdots,\ii(\xi_i)$ as
	\[P\Psi^j_i:= P\Psi^j_{\tau_i(\xi),\xi_i}.\]
	These projected functions generate a subspace of $\oH$, $\{ P\Psi^j_i: i=1,\cdots, k+l , j=1,\cdots,\ii(\xi_i)\}$ denoted as $K_{\xi}$.
	Furthermore, we introduce an inner product for the space $\oH$ as follows:
	\[ \la \psi,\phi\ra:= \int_{\Sigma} \la \nabla \psi,\nabla \phi\ra_g \, dv_g +\beta\int_{\Sigma} \psi\phi 
	\, dv_g \text{ for any }\psi,\phi\in \oH, \]
	where $\la \cdot,\cdot\ra_g$ denotes the inner product on the tangent bundle of  $\Sigma$ induced by the Riemann metric 
	$g$. 
	The orthogonal complement of $K_{\xi}$ , denoted as $K_{\xi}^{\perp}$, is  as follows: 
	\[ K_{\xi}^{\perp}=\left\{ \phi\in \oH: \langle \phi, f\rangle =0 \text{ for all } f\in K_{\xi}\right\}. \]
	We also introduce $\Pi_{\xi}: \oH\rightarrow K_{\xi}$ and $\Pi_{\xi}^{\perp}: \oH\rightarrow K_{\xi}^{\perp}$ as the orthogonal projections onto $K_{\xi}$ and $K_{\xi}^{\perp}$, respectively.
	The solution $u$ can decompose into two parts: one part lies on the manifold  $\mathcal{M}^{k,l}_{\varepsilon}$; the other part is on $K_{\xi}^\perp$ near the orthogonal space of the tangent space of the manifold $\mathcal{M}^{k,l}_{\varepsilon}$, i.e. 
	$u= \sum_{i=1}^{k+l} PU_i+\phi^{\varepsilon}_{\xi},$
	where  $ \phi^{\varepsilon}_{\xi}$ is  the remainder term.
	
	\section{The Lyapunov-Schmidt reduction}
	Utilizing the Moser-Trudinger type inequality on compact Riemann surfaces, as in~\cite{yang2006extremal}, we have 
	\[\sup _{\int_{\Sigma}|\nabla_g u|^2 \, dv_g=1, \int_{\Sigma} u \, dv_g=0} \int e^{2\pi u^2} \, dv_g <+\infty.
	\]
	Since $\left(\int_{\Sigma}|\nabla u|^2_g \, dv_g+\beta\int_{\Sigma} |u|^2 \, dv_g\right)^{\frac 1 2}$ and $\left(\int_{\Sigma}|\nabla u|^2_g \, dv_g\right)^{\frac 1 2}$ are equivalent norms in the Hilbert space $\oH$,
	it follows that  for any $u\in \oH$
	\begin{eqnarray*}
		\log \int_{\Sigma} e^u \, dv_g &\leq& \log \int_{\Sigma}  e^{ 2\pi\frac{ u^2}{\|u\|^2} + \frac 1 {8\pi}\|u\|^2  } \, dv_g \quad \text{(by Young's Inequality)}\\
		&=& 
		\frac 1 {8\pi} \int_{\Sigma} |\nabla_g u|^2 \, dv_g  +C\leq \frac{1}{8\pi C} \la u,u\ra+C, 
	\end{eqnarray*}
	where $C>0$ is a constant. 
	Consequently, $\oH\rightarrow L^p(\Sigma), u\mapsto e^u$ is continuous.  
	For any $p>1$, let $i^*_p: L^p(\Sigma)\rightarrow \oH$  be the adjoint operator corresponding to the immersion $i: \oH \rightarrow L^{\frac{p}{p-1}}$ and $\tilde{i}^* : \cup_{p>1} L^p(\Sigma)\rightarrow \oH$. For any $f\in L^p(\Sigma)$, we define that 
	$i^*(f):=\tilde{i}^*( f-\bar{f})$, i.e. for any $h\in \oH $, 
	$\langle i^*(f), h\rangle =\int_{\Sigma}(f-\bar{f}) h \, dv_g.$
	
	The problem~\eqref{rho_2} has the following equivalent form,
	\begin{equation}
		\begin{cases}
			u=i^*(\varepsilon^2 V e^u) \\
			u\in \oH 
		\end{cases}. 
	\end{equation}
	\subsection{The linearized operator}
	We consider the linearized operator 
	\[ L_{\xi}^{\varepsilon}(\phi):= \Pi^{\perp}_{\xi}(\phi-i^*(\varepsilon^2 Ve^{\sum_{i=1}^{k+l} PU_i}\phi))\]
	for any fixed $\xi\in {M}_{\delta}$. 
	The following lemma shows that for fixed $\varepsilon$ the linearized operator is invertible in the space $K_{\xi}^{\perp}$, and the norm of the inverse operator is controlled by $|\log \varepsilon|$ as $\varepsilon\rightarrow 0$, which is a key lemma to solve the problem~\eqref{rho_2}. 
	\begin{lemma}~\label{t1} For any $\delta>0$, let $\xi=(\xi_1,\cdots,\xi_{k+l})\in M_{\delta}$. There exists $\varepsilon_{0}>0$ and a constant $c>0$ such that for any $\varepsilon \in\left(0, \varepsilon_{0}\right)$ we have 
		\[\left\|L_{\xi}^{\varepsilon}(\phi)\right\| \geqslant \frac{c}{|\log \varepsilon|}\|\phi\|, \quad \forall \phi \in K_{\xi}^{\perp} .\]
		In particular, the operator $L_{\xi}^{\varepsilon}$ is invertible and $\left\|\left(L_{\xi}^{\varepsilon}\right)^{-1}\right\| \leqslant|\log \varepsilon| / c$.
	\end{lemma}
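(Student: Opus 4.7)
\begin{altproof}{Lemma~\ref{t1} (plan)}
The plan is to argue by contradiction via a blow-up/rescaling procedure around each concentration point, in the standard spirit of \cite{del_pino_singular_2005, pino_collapsing_2006, figueroa2022bubbling} but adapted to the Riemann surface with boundary setting. Suppose the conclusion fails: then there exist sequences $\varepsilon_n\to 0$, $\xi^n=(\xi^n_1,\dots,\xi^n_{k+l})\in M_\delta$ and $\phi_n\in K_{\xi^n}^\perp$ with $\|\phi_n\|=1$ and
\[
|\log\varepsilon_n|\,\|L_{\xi^n}^{\varepsilon_n}(\phi_n)\|\longrightarrow 0.
\]
By compactness of $M_\delta$ we may pass to a subsequence so that $\xi^n\to \xi^\infty\in M_\delta$. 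Writing $L_{\xi^n}^{\varepsilon_n}(\phi_n)=\phi_n-\Pi^\perp_{\xi^n}i^*(\varepsilon_n^2 Ve^{\sum PU_i^n}\phi_n)=h_n$ with $\|h_n\|=o(|\log\varepsilon_n|^{-1})$, I obtain the weak form
\[
\langle\phi_n,\psi\rangle - \int_\Sigma \varepsilon_n^2 Ve^{\sum PU_i^n}\phi_n\psi\,dv_g = \langle h_n,\psi\rangle \quad\text{for all }\psi\in K_{\xi^n}^\perp.
\]

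Next I would perform the rescaling at each concentration point. For an interior point $\xi_i^n$ ($1\le i\le k$) set $\tilde\phi_{n,i}(y)=\phi_n\big(y_{\xi_i^n}^{-1}(\tau_i^n\varepsilon_n y)\big)$ on $B_{r_0/(\tau_i^n\varepsilon_n)}(0)$; for a boundary point ($k+1\le i\le k+l$) do the same but on the half-disk, where the Neumann condition \eqref{eq:out_normal_derivatives} becomes a standard Neumann condition $\partial_{y_2}\tilde\phi_{n,i}=0$ on the flat boundary. Using $|\phi_n(x)|\leq C\|\phi_n\|\cdot\text{(Moser--Trudinger)}$-type control together with the explicit asymptotics of $PU_i^n$ (which near $\xi_i^n$ looks like $u_{\tau_i^n,0}(y_{\xi_i^n}(\cdot))+O(1)$), a standard computation shows that the coefficient $\varepsilon_n^2 Ve^{\sum PU_j^n}$ transforms, in the rescaled variable, into $\tfrac{8}{(1+|y|^2)^2}+o(1)$ on any compact set. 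Elliptic regularity gives $\tilde\phi_{n,i}\to \tilde\phi_i^\infty$ in $C^1_{\text{loc}}$, with $\tilde\phi_i^\infty$ bounded and satisfying
\[
-\Delta\tilde\phi_i^\infty = \tfrac{8}{(1+|y|^2)^2}\tilde\phi_i^\infty \quad\text{on the appropriate domain,}
\]
with a homogeneous Neumann condition at the flat boundary in the boundary case. By the well-known classification, the space of bounded solutions is spanned by $\psi^0_{1,0},\psi^1_{1,0},\psi^2_{1,0}$ in the interior case and, after reflection, by $\psi^0_{1,0},\psi^1_{1,0}$ in the boundary case (the $\psi^2$ direction is killed by the Neumann condition). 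The orthogonality $\langle\phi_n,P\Psi^j_i\rangle=0$, combined with a computation of $\langle P\Psi^j_i,P\Psi^{j'}_{i'}\rangle$ at leading order (essentially a Gram matrix that is diagonally dominant of order $\varepsilon_n^{-2}$ or $|\log\varepsilon_n|^{-1}$ appropriately normalized), forces $\tilde\phi_i^\infty$ to be orthogonal to every $\psi^j_{1,0}$ in the $L^2$-sense weighted by $\tfrac{1}{(1+|y|^2)^2}$, hence $\tilde\phi_i^\infty\equiv 0$ for each $i$.

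The final step is to upgrade ``$\tilde\phi_i^\infty\equiv 0$ for each $i$'' to $\|\phi_n\|\to 0$, yielding the contradiction. I split the domain as $\Sigma=\bigcup_i U_{Rr_0\varepsilon_n\tau_i^n}(\xi_i^n)\cup\Omega_n^{\text{out}}$. On the inner pieces, the local $L^\infty$-convergence $\tilde\phi_{n,i}\to 0$ together with the potential estimate $\int \varepsilon_n^2 Ve^{\sum PU_j^n}\tilde\phi_{n,i}^2\,dv_g\to 0$ controls that contribution. On the outer region, I test the equation against $\phi_n$ itself and use the Green's representation $\phi_n=i^*(\varepsilon_n^2 Ve^{\sum PU_i^n}\phi_n)+h_n+\text{projection terms}$, bounding the RHS in $\oH$ by exploiting that $\varepsilon_n^2 Ve^{\sum PU_i^n}$ concentrates at $\xi^n$ with total mass $\sim\varrho(\xi_i^n)$ and that $\phi_n$ has $L^\infty$-bound of order $|\log\varepsilon_n|$ coming from Moser--Trudinger. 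The key estimate, and the main obstacle, is to show that the outer $L^p$-norm of $\varepsilon_n^2 Ve^{\sum PU_i^n}\phi_n$ is of order $o(|\log\varepsilon_n|^{-1})$, which is precisely what produces the factor $|\log\varepsilon_n|$ in the statement; this balances a logarithmic loss from the Green's kernel against the improved decay of $e^{U_i}$ outside a fixed neighborhood of $\xi_i^n$. Combining all pieces yields $\|\phi_n\|\to 0$, contradicting $\|\phi_n\|=1$.

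The most delicate point is the sharp $|\log\varepsilon|$-dependence: coarser Moser--Trudinger estimates would only give $\|(L_\xi^\varepsilon)^{-1}\|=O(|\log\varepsilon|^\alpha)$ for some larger $\alpha$, whereas here one needs to track the logarithmic loss exactly in the outer-region estimate and in the Gram matrix of the $P\Psi_i^j$. The boundary points add an extra layer of care since the isothermal chart is only a half-disk and the orthogonality to $P\Psi_i^2$ is absent there; one has to verify that the half-space classification and the Neumann projection are consistent with the inner-product used to define $K_\xi^\perp$, so that no ghost kernel direction is overlooked.
\end{altproof}
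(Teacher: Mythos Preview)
Your overall contradiction/rescaling strategy matches the paper's, but there is a genuine gap at the heart of the argument: you have not explained how to kill the $\psi^0$ direction of the limiting kernel. Recall that $K_\xi$ is spanned only by $\{P\Psi^j_i : j=1,\dots,\ii(\xi_i)\}$, so the hypothesis $\phi_n\in K_\xi^\perp$ gives you orthogonality to $\psi^1_{1,0},\psi^2_{1,0}$ (resp.\ $\psi^1_{1,0}$ in the boundary case) in the limit, but says nothing about $\psi^0_{1,0}=\frac{1-|y|^2}{1+|y|^2}$. Your claim that the orthogonality ``forces $\tilde\phi_i^\infty$ to be orthogonal to every $\psi^j_{1,0}$'' is therefore incorrect as stated: a priori $\tilde\phi_i^\infty$ could be a nonzero multiple of $\psi^0_{1,0}$, and the classification alone does not yield $\tilde\phi_i^\infty\equiv 0$.

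This is not a cosmetic omission; it is exactly where the factor $|\log\varepsilon|$ enters. The paper (following Esposito) deals with the $\psi^0$ direction by constructing an auxiliary test function $Pu_i$ built from the log-weighted function
\[
\omega_i(y)=\tfrac{4}{3\tau_i}\log(\tau_i^2\varepsilon^2+|y|^2)\,\tfrac{\tau_i^2\varepsilon^2-|y|^2}{\tau_i^2\varepsilon^2+|y|^2}+\tfrac{8}{3\tau_i}\,\tfrac{\tau_i^2\varepsilon^2}{\tau_i^2\varepsilon^2+|y|^2},
\]
together with a correction $t_i$; one has $\|Pu_i\|=O(|\log\varepsilon|)$, so that testing the equation against $Pu_i$ and using $\|h_n\|=o(|\log\varepsilon|^{-1})$ gives $\langle Pu_i,h_n\rangle=o(1)$, from which one extracts $\langle\phi_n,P\Psi^0_i\rangle\to 0$. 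Only then does the classification force $\tilde\phi_i^\infty\equiv 0$. Your proposal locates the logarithmic loss in an outer-region $L^p$ estimate and in the Gram matrix of the $P\Psi^j_i$; this misidentifies the mechanism. Also, the paper obtains the contradiction directly from the mass identity $\sum_i\|\tilde\phi_i^0\|_{L_i}^2=\tfrac18$ (strong $L_i$-convergence) versus $\tilde\phi_i^0\equiv 0$, rather than through your inner/outer ``upgrade'' step, which you leave unproved.
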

	By~\cite{Esposito2005}, the proof of Lemma~\ref{t1} is relatively standard, which is given in Appendix~\ref{linAp}.

	For fixed $\varepsilon$ and $ \xi\in \Xi_{k,l}$,  we try to obtain the solution of  
	\[  \Pi_{\xi}^{\perp}\left( \sum_{i=1}^{k+l} PU_i + \phi^{\varepsilon}_{\xi}- i^*( \varepsilon^2 e^{\sum_{i=1}^{k+l} PU_i +\phi^{\varepsilon}_{\xi}} ) \right)=0, \]
	for $\phi^{\varepsilon}_{\xi}\in K_{\xi}^{\perp}$ applying the fixed-point theorem. Then, it is reduced to a finite-dimensional problem. 
	\begin{proposition}~\label{thm2} For any $\delta>0$,
		and $\xi=(\xi_1,\cdots,\xi_{k+l})\in M_{\delta}$. For any $p\in(1,\frac 6 5)$ there exist $\varepsilon_0>0$ and $R>0$ (uniformly in $\xi$) such that for any $\varepsilon\in (0,\varepsilon_0)$ there is a unique $\phi^{\varepsilon}_{\xi}\in K^{\perp}_{\xi}$ such that 
		\begin{equation}~\label{pi}
			\Pi_{\xi}^{\perp} \left[ \sum_{i=1}^{k+l} PU_i +\phi^{\varepsilon}_{\xi} -i^*\left( \varepsilon^2 V e^{\sum_{i=1}^{k+l} P U_i +\phi^{\varepsilon}_{\xi} }  \right)\right] =0. 
		\end{equation}
		and 
		$ \|\phi^{\varepsilon}_{\xi}\|\leq R \varepsilon^{ \frac{ 2-p}{p}}|\log \varepsilon|. $
	\end{proposition}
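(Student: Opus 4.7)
The plan is to cast \eqref{pi} as a fixed-point problem on $K_{\xi}^{\perp}$ and solve it by a Banach contraction argument, using Lemma~\ref{t1} to invert the linearization and the Moser–Trudinger estimate to control the nonlinear remainder. Write $V_{\xi}:=\sum_{i=1}^{k+l}PU_i$ and decompose
\[
\sum_{i=1}^{k+l}PU_i+\phi-i^{*}\!\left(\varepsilon^{2}Ve^{V_{\xi}+\phi}\right)
=L_{\xi}^{\varepsilon}(\phi)-\Pi_{\xi}^{\perp}\bigl(N(\phi)+R_{\varepsilon}\bigr),
\]
where $N(\phi):=i^{*}\bigl(\varepsilon^{2}Ve^{V_{\xi}}(e^{\phi}-1-\phi)\bigr)$ is the quadratic remainder and $R_{\varepsilon}:=i^{*}\bigl(\varepsilon^{2}Ve^{V_{\xi}}\bigr)-V_{\xi}$ is the error of the ansatz. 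Thus \eqref{pi} becomes the fixed-point equation
\[
\phi=T(\phi):=\bigl(L_{\xi}^{\varepsilon}\bigr)^{-1}\Pi_{\xi}^{\perp}\bigl(N(\phi)+R_{\varepsilon}\bigr).
\]

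First I would estimate $\|R_{\varepsilon}\|$. Using $\|i^{*}(f)\|\leq C\|f\|_{L^{p}(\Sigma)}$ (from duality and Sobolev embedding, legitimate since $p<2$), and comparing $\varepsilon^{2}Ve^{V_{\xi}}$ on each $U_{r_0}(\xi_i)$ with $\beta PU_i-(-\Delta_g)PU_i=\varepsilon^{2}\chi_{\xi_i}e^{-\varphi_{\xi_i}}e^{U_{\tau_i,\xi_i}}-\overline{(\cdot)}$ from \eqref{eq:projection}, the specific choice \eqref{tau} of $\tau_i$ (which absorbs the leading-order contribution of $V$, $H^g$ and the cross Green's-function terms) forces the residual to be $O(\varepsilon^{(2-p)/p})$ in $L^p$. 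Outside the bubbles the residual is exponentially small in $\varepsilon$. Thus for every $p\in(1,\tfrac{6}{5})$ there is $C=C(\delta,p)$ with $\|R_{\varepsilon}\|\leq C\varepsilon^{(2-p)/p}$, uniformly in $\xi\in M_{\delta}$.

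Next, using $|e^{\phi}-1-\phi|\leq \tfrac{1}{2}\phi^{2}e^{|\phi|}$, Hölder and the Moser–Trudinger bound proved at the start of Section~3 (which gives $e^{s\phi}\in L^{q}(\Sigma)$ with norm controlled exponentially by $\|\phi\|^{2}$), one obtains
\[
\|N(\phi)\|\leq C\bigl\|\varepsilon^{2}Ve^{V_{\xi}}\bigr\|_{L^{2p}}\|\phi\|^{2}\,e^{C\|\phi\|^{2}}\leq C\varepsilon^{(2-p)/p}\|\phi\|^{2}\,e^{C\|\phi\|^{2}},
\]
and a similar estimate gives the Lipschitz bound $\|N(\phi_{1})-N(\phi_{2})\|\leq C\varepsilon^{(2-p)/p}(\|\phi_{1}\|+\|\phi_{2}\|)\|\phi_{1}-\phi_{2}\|$ on balls of small radius. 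Combining with Lemma~\ref{t1}, which yields $\|(L_{\xi}^{\varepsilon})^{-1}\|\leq|\log\varepsilon|/c$, and choosing
\[
B_{R}:=\bigl\{\phi\in K_{\xi}^{\perp}:\|\phi\|\leq R\varepsilon^{(2-p)/p}|\log\varepsilon|\bigr\},
\]
standard manipulations show that $T$ maps $B_{R}$ into itself and is a contraction there provided $R$ is chosen sufficiently large and $\varepsilon_{0}$ sufficiently small, both independently of $\xi\in M_{\delta}$. The Banach fixed-point theorem then yields a unique $\phi_{\xi}^{\varepsilon}\in B_{R}$ solving \eqref{pi}.

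The delicate step, and the only place where the specific structure of the problem enters, is the error estimate $\|R_{\varepsilon}\|=O(\varepsilon^{(2-p)/p})$: one has to expand $PU_i$ near $\xi_i$ using the asymptotics of the regular part $H^g$, the cut-off terms in \eqref{eqR}, and the half-space boundary behavior via \eqref{eq:b_restrict}–\eqref{eq:out_normal_derivatives} for $\xi_i\in\partial\Sigma$, and verify that the cancellation built into \eqref{tau} is sharp enough. Once this pointwise expansion is obtained, the $L^{p}$ integration is routine thanks to $p<6/5$ (which controls the mild singularity of $e^{U_i}$ in $L^{p}$ after rescaling $y=\varepsilon\tau_i z$). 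The Moser–Trudinger control of $N$ and the contraction argument are then purely abstract.
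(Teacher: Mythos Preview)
Your strategy is exactly the paper's: rewrite \eqref{pi} as a fixed-point equation $\phi=(L_{\xi}^{\varepsilon})^{-1}\Pi_{\xi}^{\perp}i^{*}(M(\phi))$ with $M(\phi)=\varepsilon^{2}Ve^{V_{\xi}}(e^{\phi}-1-\phi)+\bigl(\varepsilon^{2}Ve^{V_{\xi}}-\varepsilon^{2}\sum_{i}\chi_{i}e^{-\varphi_{i}}e^{U_{i}}\bigr)$, then close by contraction using Lemma~\ref{t1}. Your identification of $R_{\varepsilon}$ is correct and is precisely the content of Lemma~\ref{lem5}, which gives $\bigl|\varepsilon^{2}Ve^{V_{\xi}}-\varepsilon^{2}\sum_{i}\chi_{i}e^{-\varphi_{i}}e^{U_{i}}\bigr|_{L^{p}}\le c\,\varepsilon^{(2-p)/p}$; this is indeed the place where the choice \eqref{tau} is used.

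There is, however, a genuine quantitative slip in your nonlinear bound. You assert
\[
\|N(\phi)\|\le C\,\bigl\|\varepsilon^{2}Ve^{V_{\xi}}\bigr\|_{L^{2p}}\,\|\phi\|^{2}e^{C\|\phi\|^{2}}\le C\,\varepsilon^{(2-p)/p}\|\phi\|^{2}e^{C\|\phi\|^{2}},
\]
but $\bigl\|\varepsilon^{2}Ve^{V_{\xi}}\bigr\|_{L^{q}}\sim \varepsilon^{2(1-q)/q}$ (see Lemma~\ref{lemb2}), which for any $q>1$ is a \emph{negative} power of $\varepsilon$, not the positive power $\varepsilon^{(2-p)/p}$ you wrote. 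The paper's Lemma~\ref{lem6} gives the correct form
\[
\bigl|\varepsilon^{2}Ve^{V_{\xi}}(e^{\phi}-1-\phi)\bigr|_{L^{p}}\le c_{1}\,e^{c_{2}\|\phi\|^{2}}\varepsilon^{(2-2pr)/(pr)}\|\phi\|^{2},
\]
with $r>1$ an auxiliary H\"older exponent. The contraction still closes because the prefactor $\|\phi\|^{2}\le R^{2}\varepsilon^{2(2-p)/p}|\log\varepsilon|^{2}$ compensates the blow-up, but only provided the combined exponent $\tfrac{2-2pr}{pr}+\tfrac{2-p}{p}$ stays positive; this is what forces $p$ close to~$1$ (and is the mechanism behind the hypothesis $p\in(1,\tfrac{6}{5})$). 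With your stated bound the argument would go through for every $p<2$, which is too good. Once you replace your estimate by the correct one from Lemma~\ref{lem6}, the rest of your proof is complete and coincides with the paper's.
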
 
	\begin{proof}
		Define operators  $T^{\varepsilon}_{\xi}$ and $M^{\varepsilon}_\xi$ on $K_{\xi}^{\perp}$ as follows: 
		\begin{equation*}
			\begin{aligned}
				&T_{\xi}^{\varepsilon}(\phi)=\left[\left(L_{\xi}^{\varepsilon}\right)^{-1} \circ \Pi_{\xi}^{\perp} \circ i^{*}\right] M_{\xi}^{\varepsilon}(\phi), \\
				&M_{\xi}^{\varepsilon}(\phi)=\varepsilon^{2} V  e^{\sum_{i=1}^{k+l} P U_{i}}\left[e^{\phi}-1-\phi\right]+\varepsilon^{2} \left[ V  e^{\sum_{i=1}^{k+l} P U_{i}}- \sum_{i=1}^{k+l}\chi_{i} e^{-{\varphi}_{\xi_i}} e^{U_{i}}\right].
			\end{aligned}
		\end{equation*}
		Since $i^*(\varepsilon^2\sum_{i=1}^{k+l}\chi_{i} e^{-{\varphi}_{\xi_i}} e^{U_i})= \sum_{i=1}^{k+l} PU_i$, it follows that 
		$\phi$ is a fixed point of $T^{\varepsilon}_{\xi}$ if and only if 
		$\phi$ solves~\eqref{pi} on $K^{\perp}_{\xi}$. \\
		{\it Claim. There exist $\varepsilon_0>0$ and $R > 0$ such that $T^{\varepsilon}_{\xi}$ is a contraction map for any $\varepsilon \in (0, \varepsilon_0)$ and $|\phi| \leq R \varepsilon^{\frac{2-p}{p}}|\log \varepsilon|$.} \\
		Applying Lemma~\ref{t1}, Lemma~\ref{lem5}, Lemma~\ref{lem6}, and the Moser-Trudinger inequality, we obtain 
		\begin{eqnarray*} 
			\|T^{\varepsilon}_{\xi}(\phi)\|&\leq&  C|\log \varepsilon| \| i^*\circ M^{\varepsilon}_{\xi}(\phi)\|
			\leq  C|\log \varepsilon| \left|  M^{\varepsilon}_{\xi}(\phi)\right|_{L^p(\Sigma)}\\
			&\leq & C |\log \varepsilon|\left(  
			\left| \varepsilon^2 Ve^{\sum_{i=1}^{k+l} PU_i} ( e^{\phi}-1-\phi)\right|_{L^p(\Sigma)}\right.\\
			&&+\left.  \left| \varepsilon^2 Ve^{\sum_{i=1}^{k+l} PU_i} -\varepsilon^2\sum_{i=1}^{k+l}\chi_{i} e^{U_i}\right|_{L^p(\Sigma)}\right)\\
			&\leq &C |\log \varepsilon|\left(\|\phi\|^2 e^{c_2\|\phi\|^2 } \varepsilon^{\frac{2-2pr}{pr}}  + \varepsilon^{\frac{2-p}{p}}\right) ,
		\end{eqnarray*}
		where $c_2 > 0$ is a constant, $r > 1$ is sufficiently close to 1, and $p \in (1, \frac 65)$. We then fix arbitrary $p \in (1, \frac 65 )$ and choose $R > 0$ large enough such that $C(1 + e^{c_2}) \leq R$. Next, we select $\varepsilon_1 > 0$ such that $\max\{R\varepsilon^{\frac{2-2pr}{pr} + \frac{2-p}{p}}| \log \varepsilon|, R \varepsilon^{\frac{2-p}{p}} |\log \varepsilon|\} \leq 1$ for all $\varepsilon \in (0, \varepsilon_1)$. Consequently, for any $|\phi| \leq R \varepsilon^{\frac{2-p}{p}} |\log \varepsilon|$, we have $|T^{\varepsilon}_{\xi}| \leq R \varepsilon^{\frac{2-p}{p}}|\log \varepsilon|$ for all $\varepsilon\in (0,\varepsilon_{1})$.
		And similarly,  by Lemma~\ref{lem6} we deduce that 
		\begin{eqnarray*}
			\|T^{\varepsilon}_{\xi}(\phi_1)-T^{\varepsilon}_{\xi}(\phi_2) \|&\leq & C'|\log \varepsilon| \left| \varepsilon^2 V  e^{\sum_{i=1}^{k+l}  PU_i }( e^{\phi_1} -e^{\phi_2}-(\phi_1-\phi_2)) \right|_{L^p(\Sigma)}\\
			&\leq & C' |\log \varepsilon|e^{ c_2( \sum_{j=1}^2 \|\phi_j\|^2 )} \varepsilon ^{\frac{2-2pr}{pr}}( \sum_{j=1}^2 \|\phi_j\| )) \|\phi_1-\phi_2\|\\
			&\leq&  2RC^\prime e^{2c_2}  \varepsilon^{\frac{2-2pr}{pr}+ \frac{1+\alpha_0-p}{p}} \log ^2\varepsilon \|\phi_1-\phi_2\|\leq  \frac{1}{2}\|\phi_1-\phi_2\|,  
		\end{eqnarray*}
		uniformly for all $\varepsilon\in (0,\varepsilon_2)$ and $\xi\in M_{\delta}$, where $\varepsilon_2>0$ is chosen such that \[ \max\{  R \varepsilon^{\frac{2-p}{p}} |\log \varepsilon|,  2RC^{\prime} e^{c_2} \varepsilon^{\frac{2-2pr}{pr}+\frac{2-p}{p}} \log^2 \varepsilon\}<\frac{1}{2},\] for any $  \varepsilon \in (0,\varepsilon_2).$  Then define $\varepsilon_0=\min\{ \varepsilon_1,\varepsilon_2\}$.
		Thus $T^{\varepsilon}_{\xi}(\phi)$ is a contraction map on $\{\phi\in K_{\xi}^{\perp}: \|\phi\|\leq R \varepsilon^{\frac{2-p}{p}} |\log \varepsilon| \}. $
		By the contracting-mapping principle, there exists a unique fixed point of $T^{\varepsilon}_{\xi}$ on $ \{\phi\in K_{\xi}^{\perp}: \|\phi\|\leq R \varepsilon^{\frac{2-p}{p}} |\log \varepsilon| \}$. 
	\end{proof}

	
	\section{ The reduced functional and its expansion}
	The associated functional $E_{\varepsilon}(u)$ of the problem~\eqref{rho_2} is defined as following:
	\begin{equation}
		E_{\varepsilon}(u)= \frac{1}{2} \int_{\Sigma} (|\nabla u|_g^2 +\beta |u|^2)\, dv_g -\varepsilon^2 \int_{\Sigma}V e^u \, dv_g.
	\end{equation}
	Assume $u$ has the form $\sum_{i=1}^{k+l} PU_i+\phi^{\varepsilon}_{\xi}$, where $\phi^{\varepsilon}_{\xi}$ is obtained by Proposition~\ref{thm2}. Then, the reduced functional is defined by  $\tilde{E}_{\varepsilon}(\xi):= E_{\varepsilon}( \sum_{i=1}^{k+l} PU_i +\phi^{\varepsilon}_{\xi})$ with $\|\phi^{\varepsilon}_{\xi}\|\leq R \varepsilon^{\frac{2-p } {p}}|\log \varepsilon|$, i.e. 
	\begin{eqnarray}\label{eq:def_reduced_tilde_E}
		\widetilde{E}_{\varepsilon}(\xi):&=& \frac{1}{2} \int_{\Sigma} \left(\left|\nabla \Big( \sum_{i=1}^{k+l} PU_i +\phi^{\varepsilon}_{\xi}\Big)\right|_g^2+\beta\left |\sum_{i=1}^{k+l} \Big(PU_i +\phi^{\varepsilon}_{\xi}\Big)\right|^2\right)  \, dv_g \\
		&&-\varepsilon^2 \int_{\Sigma} Ve^{  \sum_{i=1}^{k+l} PU_i +\phi^{\varepsilon}_{\xi} }\, dv_g.
	\end{eqnarray}
	The reduced functional  $\tilde{E}_{\varepsilon}$ has a $C^1$-expansion with respect to $\xi$ as stated in the following  proposition: \par 
	\begin{proposition}~\label{thm3} As $\varepsilon\rightarrow 0,$
		$$
		\widetilde{E}_{\varepsilon}(\xi) = 4\pi m (3\log 2 - 2) -8\pi m \log \varepsilon -\frac{1}{2}\cF^V_{k,l}(\xi) +o(1),
		$$
		$C^{1}$-uniformly convergent in any compact sets of $\Xi^\prime_{k,l}$, where $m=2k+l$.
	\end{proposition}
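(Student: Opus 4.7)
The plan is to split the analysis into two parts: (a) the explicit expansion of the ``unperturbed'' energy $E_{\varepsilon}\bigl(\sum_{i=1}^{k+l} PU_i\bigr)$, which carries all the geometric information, and (b) a perturbation argument showing that the contribution of $\phi^{\varepsilon}_{\xi}$ is negligible in $C^{1}$ uniformly on compact subsets of $\Xi^{\prime}_{k,l}$.

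First I would expand the quadratic term $\tfrac{1}{2}\bigl\|\sum_i PU_i\bigr\|^2 = \tfrac{1}{2}\sum_{i,j}\langle PU_i, PU_j\rangle$ by testing with the defining equation \eqref{eq:projection} to convert it into $\sum_{i,j}\int_{\Sigma} PU_j\cdot \varepsilon^{2}\chi_{i}e^{-\varphi_{i}}e^{U_i}\,dv_g$ (using $\int PU_j=0$). For the off-diagonal terms I would use the standard asymptotic $PU_j(x)=\varrho(\xi_j)G^g(x,\xi_j)+\log(8\tau_j^2)+o(1)$ uniformly away from $\xi_j$, so that the concentration of $\varepsilon^{2}\chi_i e^{-\varphi_i}e^{U_i}$ at $\xi_i$ extracts $\varrho(\xi_i)\bigl[\varrho(\xi_j)G^g(\xi_i,\xi_j)+\log(8\tau_j^2)\bigr]$. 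For the diagonal terms I would use the finer local expansion $PU_i(x)=U_i(x)-\log(8\tau_i^2)+\varrho(\xi_i)H^g(x,\xi_i)+o(1)$ in $U(\xi_i)$, rescale $y=\tau_i\varepsilon z$, and evaluate the resulting integrals against the standard Liouville bubble on $\mathbb{R}^2$ (or the half-plane, when $\xi_i\in\partial\Sigma$), which produce the universal constants $\int\tfrac{\log(1+|z|^2)}{(1+|z|^2)^2}$ together with contributions involving $\log\varepsilon$, $\log\tau_i$, and $\varrho(\xi_i)R^g(\xi_i)$.

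Next I would expand the exponential term $\varepsilon^{2}\int_\Sigma V e^{\sum PU_i}\,dv_g$ by splitting the integration into the domains $U_{r_0}(\xi_i)$ and their complement. The dominant contribution comes from each $U_{r_0}(\xi_i)$, where in isothermal coordinates and after rescaling $y=\tau_i\varepsilon z$ the integrand converges to the standard bubble profile multiplied by $V(\xi_i)\exp\bigl(\varrho(\xi_i)H^g(\xi_i,\xi_i)+\sum_{j\neq i}\varrho(\xi_j)G^g(\xi_i,\xi_j)\bigr)$. The precise choice of $\tau_i$ in \eqref{tau}, namely $\tau_i^{2}=\tfrac{1}{8}V(\xi_i)\exp\bigl(\varrho(\xi_i)H^g(\xi_i,\xi_i)+\sum_{j\neq i}\varrho(\xi_j)G^g(\xi_i,\xi_j)\bigr)$, is engineered so that these factors combine with the $\log(8\tau_j^2)$ terms from the quadratic expansion to produce exactly $\varrho^2(\xi_i)R^g(\xi_i)$, $\varrho(\xi_i)\varrho(\xi_j)G^g(\xi_i,\xi_j)$, and $2\varrho(\xi_i)\log V(\xi_i)$, i.e. the terms of $\mathcal F^V_{k,l}(\xi)$. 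The universal numerical constants coming from the Liouville profile collect into $4\pi m(3\log 2 - 2)$ and the $\log\varepsilon$ contributions collect into $-8\pi m\log\varepsilon$, with $m=2k+l$.

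To pass from $E_{\varepsilon}(\sum PU_i)$ to $\widetilde E_{\varepsilon}(\xi)$, I would Taylor-expand in $\phi$: $\widetilde E_{\varepsilon}(\xi)=E_{\varepsilon}(\sum PU_i)+DE_{\varepsilon}(\sum PU_i)[\phi^{\varepsilon}_{\xi}]+O(\|\phi^{\varepsilon}_{\xi}\|^{2})$, and use the Moser--Trudinger type bounds (Lemma~\ref{lem5}, Lemma~\ref{lem6}) together with the estimate $\|\phi^{\varepsilon}_{\xi}\|\le R\,\varepsilon^{(2-p)/p}|\log\varepsilon|$ from Proposition~\ref{thm2} to conclude both terms are $o(1)$. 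For the $C^{1}$-uniform convergence, I would exploit the reduction: since $\Pi^{\perp}_{\xi}DE_{\varepsilon}(\sum PU_i+\phi^{\varepsilon}_{\xi})=0$ by \eqref{pi}, the derivative $\partial_{\xi_i}\widetilde E_{\varepsilon}$ can be written as $DE_{\varepsilon}(\sum PU_j+\phi^{\varepsilon}_{\xi})[\partial_{\xi_i}(\sum PU_j)]$ plus a remainder involving Lagrange multipliers against $\partial_{\xi_i}\phi^{\varepsilon}_{\xi}$; the multipliers are $O(\varepsilon^{(2-p)/p}|\log\varepsilon|)$, and $\partial_{\xi_i}\phi^{\varepsilon}_{\xi}$ is nearly orthogonal to $K_\xi$, which makes the remainder $o(1)$.

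The hard part will be twofold. First, producing uniform-in-$\xi\in M_\delta$ asymptotic expansions of $PU_i$ in both local and global regions, together with their $\xi$-derivatives, so that the delicate cancellations among the $\log\varepsilon$, $\log\tau_i$, $\log 2$, and Robin/Green terms yield precisely $-\tfrac12\mathcal F^V_{k,l}(\xi)$ with no stray contributions — a careful book-keeping, particularly at boundary blow-up points where $\hat\varphi_\xi$ contains the $k_g$-term from \eqref{eq:b_restrict}, is required. Second, upgrading the pointwise expansion to a $C^{1}$-uniform one requires an analogous uniform bound for $\partial_{\xi_i}\phi^{\varepsilon}_{\xi}$, which is standard but technical output of differentiating the fixed-point identity in Proposition~\ref{thm2}.
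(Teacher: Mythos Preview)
Your overall architecture matches the paper's: expand $E_\varepsilon(\sum_i PU_i)$ via the pairing identities (this is exactly Lemma~\ref{lem7} and Lemma~\ref{lem8}), then control the $\phi^{\varepsilon}_{\xi}$-contribution. For the $C^0$ part your Taylor-in-$\phi$ scheme is equivalent to the paper's direct splitting using $|e^s-1|\le e^{|s|}|s|$. One small slip: away from $\xi_j$ the expansion is $PU_j=\varrho(\xi_j)G^g(\cdot,\xi_j)+O(\varepsilon^{1+\alpha_0})$ with \emph{no} additive $\log(8\tau_j^2)$ term (recall $\int_\Sigma PU_j=0$); this constant is already absorbed into the local expansion of Lemma~\ref{lemb1}.

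The genuine gap is in your $C^1$ step. You assert the Lagrange multipliers satisfy $c^{\varepsilon}_{st}=O(\varepsilon^{(2-p)/p}|\log\varepsilon|)$ and that ``$\partial_{\xi_i}\phi^{\varepsilon}_{\xi}$ is nearly orthogonal to $K_\xi$''. Quantitatively, near-orthogonality only gives $\langle P\Psi^t_s,\partial_{(\xi_h)_j}\phi\rangle=-\langle\partial_{(\xi_h)_j}P\Psi^t_s,\phi\rangle=O(\|\phi\|/\varepsilon^2)$, so the remainder is $O\bigl(\sum|c^{\varepsilon}_{st}|\cdot\|\phi\|/\varepsilon^{2}\bigr)$; with your stated bound this is $O(\varepsilon^{2(2-p)/p-2}|\log\varepsilon|^2)$, which \emph{diverges} for every $p>1$. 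The paper closes this by first proving the sharp bound $\sum|c^{\varepsilon}_{st}|=O(\varepsilon^{2})$: one tests the residual identity \eqref{finidim} against $\partial_{(\xi_h)_j}PU_h$, uses Lemma~\ref{lem11} to see the pairing is $-\tfrac12\partial_{(\xi_h)_j}\mathcal F^V_{k,l}(\xi)+o(1)=O(1)$, and then Remark~\ref{rklem4} (the near-diagonal structure $\langle P\Psi^t_s,\partial_{(\xi_h)_j}PU_h\rangle=\tfrac{C}{\varepsilon^2}\delta_{sh}\delta_{tj}+O(\varepsilon^{\alpha_0-1})$) forces $c^{\varepsilon}_{hj}=O(\varepsilon^2)$. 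With this in hand the remainder becomes $o(\varepsilon^{-2})\cdot O(\varepsilon^2)=o(1)$.

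A second point: you flag a uniform bound on $\partial_{\xi_i}\phi^{\varepsilon}_{\xi}$ as one of the ``hard parts''. The paper in fact \emph{avoids} this entirely. Because $\phi\in K_\xi^{\perp}$, differentiating $\langle P\Psi^t_s,\phi\rangle=0$ gives $\langle P\Psi^t_s,\partial_{(\xi_h)_j}\phi\rangle=-\langle\partial_{(\xi_h)_j}P\Psi^t_s,\phi\rangle$, and only $\|\partial_{(\xi_h)_j}P\Psi^t_s\|=O(\varepsilon^{-2})$ together with $\|\phi\|=o(1)$ is used. So no implicit-function differentiation of the fixed point in Proposition~\ref{thm2} is required for the $C^1$ expansion.
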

	
	\begin{proof}
		Denote $\phi=\phi^{\varepsilon}_{\xi}$ to simplify the notation. 
		Then 
		\begin{eqnarray*}
			\widetilde{E}_{\varepsilon}(\xi)
			&=& \frac{1}{2}\left(\sum_{i=1}^{k+l} \la PU_i,PU_i\ra + \sum_{i\neq j} \la PU_i,PU_j\ra \right) +  \frac{1}{2}\left(\|\phi\|^2+2 \sum_{i=1}^{k+l}  \la PU_i,\phi\ra  \right)\\
			&&-  \int_{\Sigma} \varepsilon^2 Ve^{\sum_{i=1}^{k+l} PU_i  } \, dv_g 
			-\int_{\Sigma} \varepsilon^2 (Ve^{\sum_{i=1}^{k+l} PU_i +\phi }-Ve^{\sum_{i=1}^{k+l} PU_i }) \, dv_g . 
		\end{eqnarray*}
		We notice that $|e^s -1|\leq e^{|s|}|s| (\forall s\in \mathbb{R}). $
		By Lemma \ref{lemb2}, we obtain that 
		\begin{eqnarray*}
			&&\left|\int_{\Sigma} \varepsilon^2 (Ve^{\sum_{i=1}^{k+l} PU_i +\phi }-Ve^{\sum_{i=1}^{k+l} PU_i }) \, dv_g \right|\leq \left| \int_{\Sigma} \varepsilon^2 Ve^{\sum_{i=1}^{k+l} PU_i } e^{|\phi|}|\phi| \, dv_g \right|  \\
			&\leq& \mathcal{O}\left( \varepsilon^2 \left(\int_{\Sigma} e^{r \sum_{i=1}^{k+l} PU_i} \, dv_g \right)^{1/r}\left|e^{|\phi|} \right|_{L^s(\Sigma)} \left|\phi\right|_{L^t(\Sigma)}\right)\\
			&\leq & \mathcal{O}\left( \left|\varepsilon^2  Ve^{\sum_{i=1}^{k+l} PU_i} \right|_{L^r(\Sigma)} \|\phi\|\right) =o(1) ,
		\end{eqnarray*}
		where $r\in (1,2)$ with $\frac{1}{s}+\frac{1}{r}+\frac{1}{t}=1$ and $ \frac{2(1-r)}{r}+\frac{2-p}{p}>0$. 
		By Lemma \ref{lem7} and Lemma \ref{lem8}, as $\varepsilon\rightarrow 0$, $
		\widetilde{E}_{\varepsilon}(\xi) = \sum_{i=1}^{k+l} \varrho(\xi_i) (3\log 2 - 2\log \varepsilon) -2\sum_{ i=1}^{k+l} \varrho(\xi_i) -\frac{1}{2}\cF^V_{k,l}(\xi) +o(1).$ 
		By \eqref{pi},  it holds 
		\begin{equation}~\label{finidim}
			\sum_{i=1}^{k+l} PU_i +\phi -i^*\left( \varepsilon^2 Ve^{\sum_{i=1}^{k+l} PU_i +\phi} \right)
			= \sum_{s=1}^{k+l}\sum_{t=1}^{\ii(\xi_s)}c^{\varepsilon}_{st} P\Psi^t_{s},
		\end{equation}
		where $c^{\varepsilon}_{st}$ are coefficients.  
		Combining~\eqref{finidim} with Lemma~\ref{lem11}, we deduce that 
		\begin{equation}~\label{cc}
			\sum_{i=1}^{k+l}\sum_{j=1}^{\ii(\xi_i)} \left|c^{\varepsilon}_{ij} \right|=\mathcal{ O}(\varepsilon^2),
		\end{equation}
		via Lemma \ref{lemb1} and Remark \ref{rklem4}.  
		For the $C^{1}$-expansion,    Lemma \ref{lemb3} and Lemma \ref{lem11} imply that  
		\begin{eqnarray*}	 & &\partial_{\left(\xi_{h}\right)_{j}}E_{\varepsilon}\left(\sum_{i=1}^{k+l} P U_{i}+\phi\right) \\
			&=&\left \la\sum_{i=1}^{k+l} P U_{i}+\phi-i^{*}\left(\varepsilon^{2} V e^{\sum_{i=1}^{k+l} P U_{i}+\phi}\right), 	\partial_{\left(\xi_{h}\right)_{j}}PU_h+\sum_{i=1}^{k+l}  P \Psi_{i}^{0}\partial_{\left(\xi_{h}\right)_{j}} \tau_{i}(\xi)+\partial_{\left(\xi_{h}\right)_{j}} \phi\right\ra  \\
			& =&-\frac 1 2  \frac{\partial \mathcal{F}}{\partial\left(\xi_{h}\right)_{j}}
			\left( \xi_{1}, \cdots, \xi_{k+l}\right)+	\left\la \sum_{s=1}^{k+l} \sum_{t=1}^{\ii(\xi_s)} c_{st}^{\varepsilon}{ P\Psi^t_s}
			,  \sum_{i=1}^{k+l}  P \Psi_{i}^{0}\partial_{\left(\xi_{h}\right)_{j}} \tau_{i}(\xi)+\partial_{\left(\xi_{h}\right)_{j}} \phi\right\ra \nonumber +o(1)\\
			&=&-\frac 1 2  \frac{\partial \mathcal{F}}{\partial\left(\xi_{h}\right)_{j}}\left(\xi_{1}, \cdots, \xi_{k+l}\right)+\sum_{s=1}^{k+l} \sum_{t=1}^{\ii(\xi_s)} c_{s t}^{\varepsilon}\left\la {P \Psi_{s}^{t}}, \sum_{i=1}^{k+l}  P \Psi_{i}^{0}\partial_{\left(\xi_{h}\right)_{j}} \tau_{i}(\xi)+\partial_{\left(\xi_{h}\right)_{j}} \phi\right\ra +o(1),
		\end{eqnarray*}
		for any $h=1, \cdots,  k+l$ and  $j=1,\cdots,\ii(\xi_h)$. 
		Utilizing Lemma~\ref{lem4}, we have 
		$$
		|	\la P \Psi_{s}^{t}, P \Psi_{i}^{0}\ra |\leq \|P \Psi_{s}^{t}\|  \| P \Psi_{i}^{0}\|=O\left(\frac{1}{\varepsilon}\right).
		$$
		Taking into account that 
		$\la P\Psi^t_s,\phi\ra=0$ and $|\partial_{(\xi_h)_j}P\Psi^t_s|\leq |\partial_{(\xi_h)_j}\Psi^t_s|=\mathcal{ O}(\frac 1 {\varepsilon^2})$,   we obtain 
		\begin{equation*}
			\begin{array}{lll}
				\la P \Psi_{s}^{t}, \partial_{\left(\xi_{h}\right)_{j}} \phi \ra&=&\partial_{(\xi_h)_j}\la P \Psi_{s}^{t},\phi \ra- \la \partial_{(\xi_h)_j}  P \Psi_{s}^{t} ,\phi\ra\\
				&\leq&O\left(\|\phi\|\left\|\partial_{\left(\xi_{h}\right)_{j}} P \Psi_{s}^{t}\right\|\right)=O\left(\frac{\|\phi\|}{\varepsilon^{2}}\right)=o\left(\frac{1}{\varepsilon^{2}}\right).
			\end{array}
		\end{equation*}
		Consequently, we have
		\begin{equation}~~\label{occ}
			\left\la \sum_{s=1}^{k+l} \sum_{t=1}^{\ii(\xi_s)} c_{st}^{\varepsilon} P\Psi^t_s
			,  \sum_{i=1}^{k+l} \partial_{\left(\xi_{h}\right)_{j}} \tau_{i}(\xi) P \Psi_{i}^{0}+\partial_{\left(\xi_{h}\right)_{j}} \phi\right\ra
			=o\left( \frac{1}{\varepsilon^2 } \sum_{s=1}^{k+l} \sum_{t=1}^{\ii(\xi_s)} |c^{\varepsilon}_{st}|\right). 
		\end{equation}
		It follows that 
		\begin{equation}~\label{eq:partial_1_c}
			\partial_{(\xi_h)_j} \tilde{E}_{\varepsilon}(\xi) = -\frac 1 2 \frac{\partial \cF^V_{k,l}}{\partial\left(\xi_{h}\right)_{j}}\left(\xi_{1}, \cdots, \xi_{k+l}\right)
			+ o\left(\frac{1}{\varepsilon^2} \sum_{s=1}^{k+l} \sum_{t=1}^{\ii(\xi_s)} |c^{\varepsilon}_{st
			}|\right). 
		\end{equation}
		Then, \eqref{cc} and  \eqref{eq:partial_1_c}  imply that for any $h=1,\cdots, k+l $ and $j=1,\cdots, \ii(\xi_h)$
		\[	\partial_{\left(\xi_{h}\right)_{j}}E_{\varepsilon}\left( \sum_{i=1}^{k+l} PU_i +\phi \right) = -\frac 1 2 \frac{\partial \mathcal{F}}{\partial\left(\xi_{h}\right)_{j}}\left(\xi_{1}, \cdots, \xi_{k+l}\right)+o(1), \]
		as $\varepsilon\rightarrow 0.$
	\end{proof}
	On the other hand,  $\sum_{i=1}^{k+l} PU_i+ \phi^{\varepsilon}_{\xi}$ is a critical point of $E_{\varepsilon}(u)$ in $\oH$, which is equivalent to $\xi$ being a critical point of $\tilde{E}_{\varepsilon}(\xi)$ in $\Xi^\prime_{k,l}$.  
	
	\begin{proposition}~\label{thm4} There exists $\varepsilon_0>0$ such that for any fixed $\varepsilon\in (0,\varepsilon_{0})$,
		the function $\sum_{i=1}^{k+l} P U_{\tau_{i}(\xi), \xi_{i}}+\phi_{\xi}^{\varepsilon}$ is a solution of~\eqref{rho_2} for some $\xi\in \Xi^\prime_{k,l}$ if and only if $\xi$ is a critical point of the reduced map $$\tilde{E}_{\varepsilon}: M_{\delta}\rightarrow \R^2, \xi \mapsto \tilde{E}_{\varepsilon}(\xi)= E_{\varepsilon}\left(\sum_{i=1}^{k+l} P U_{\tau_{i}(\xi), \xi_{i}}+\phi_{\xi}^{\varepsilon}\right),$$
		for some $\tau>0.$
	\end{proposition}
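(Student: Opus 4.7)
The plan is to read off Proposition~\ref{thm4} from the Lagrange-multiplier form of the reduction already established in Proposition~\ref{thm2}. Setting $u_\xi := \sum_{i=1}^{k+l} PU_i + \phi^\varepsilon_\xi$, relation~\eqref{finidim} states
\begin{equation*}
u_\xi - i^*(\varepsilon^2 V e^{u_\xi}) = \sum_{s=1}^{k+l}\sum_{t=1}^{\ii(\xi_s)} c^\varepsilon_{st}(\xi)\, P\Psi^t_s.
\end{equation*}
Since $\nabla E_\varepsilon(u_\xi) = u_\xi - i^*(\varepsilon^2 V e^{u_\xi})$ in $\oH$, the function $u_\xi$ is a weak solution of~\eqref{rho_2} exactly when the right-hand side vanishes, equivalently---by the norm and near-orthogonality estimates for $P\Psi^t_s$ collected in Lemma~\ref{lem4}---when $c^\varepsilon_{st}(\xi)\equiv 0$ for every $s,t$ at sufficiently small $\varepsilon$.

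Differentiating $\tilde{E}_\varepsilon(\xi) = E_\varepsilon(u_\xi)$ with the chain rule then yields
\begin{equation*}
\partial_{(\xi_h)_j}\tilde{E}_\varepsilon(\xi) = \langle u_\xi - i^*(\varepsilon^2 V e^{u_\xi}),\, \partial_{(\xi_h)_j}u_\xi\rangle = \sum_{s,t} c^\varepsilon_{st}(\xi)\, A_{(h,j),(s,t)}(\xi),
\end{equation*}
with reduction matrix $A_{(h,j),(s,t)}(\xi) := \langle P\Psi^t_s,\partial_{(\xi_h)_j}u_\xi\rangle$ (which already appears implicitly in the proof of Proposition~\ref{thm3}). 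The ``solution $\Rightarrow$ critical point'' direction is immediate: if $u_\xi$ solves~\eqref{rho_2} then every $c^\varepsilon_{st}$ vanishes, so $\nabla\tilde{E}_\varepsilon(\xi) = 0$. The converse direction reduces to proving that $A(\xi)$ is invertible uniformly in $\xi\in M_\delta$ for all small $\varepsilon$, since then the homogeneous system $A(\xi)\,c^\varepsilon(\xi) = 0$ forces $c^\varepsilon(\xi)\equiv 0$.

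For invertibility, decompose
\begin{equation*}
\partial_{(\xi_h)_j}u_\xi = \partial_{(\xi_h)_j}PU_h + \sum_{i=1}^{k+l}(\partial_{(\xi_h)_j}\tau_i)\,P\Psi^0_i + \partial_{(\xi_h)_j}\phi^\varepsilon_\xi.
\end{equation*}
To leading order $\partial_{(\xi_h)_j}PU_h$ is the projected translation derivative $\pm P\Psi^j_h$ of the bubble at $\xi_h$, so it contributes the dominant diagonal entry $\|P\Psi^j_h\|^2\delta_{hs}\delta_{tj}\sim \varepsilon^{-2}$. Off-diagonal entries $\langle P\Psi^t_s,P\Psi^j_h\rangle$ with $s\neq h$ are lower order because the bubbles concentrate at distinct points of $M_\delta$. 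The $(\partial_{(\xi_h)_j}\tau_i)\,P\Psi^0_i$ terms yield only lower-order entries, since at a common center the parity of $\psi^0$ versus $\psi^t$ ($t\geq 1$) forces $\langle P\Psi^t_s,P\Psi^0_s\rangle = o(\varepsilon^{-2})$ (in the boundary case by reflecting across the symmetry line of the half-disk), and for $i\neq s$ disjointness makes the inner product much smaller than $\varepsilon^{-2}$. Finally, using $\langle P\Psi^t_s,\phi^\varepsilon_\xi\rangle = 0$ in $K_\xi^\perp$,
\begin{equation*}
\langle P\Psi^t_s,\partial_{(\xi_h)_j}\phi^\varepsilon_\xi\rangle = -\langle\partial_{(\xi_h)_j}P\Psi^t_s,\phi^\varepsilon_\xi\rangle = O(\varepsilon^{-2}\|\phi^\varepsilon_\xi\|) = o(\varepsilon^{-2}).
\end{equation*}
Combining, $A(\xi) = \operatorname{diag}_h D_h(\xi) + o(\varepsilon^{-2})$ with each block $D_h$ positive definite of size $\varepsilon^{-2}$, whence $A(\xi)$ is invertible and $\|A(\xi)^{-1}\| = O(\varepsilon^2)$.

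The principal technical obstacle is the block-diagonal dominance of $A$: the cross-block and $\Psi^0$-mixing bounds are of the same nature as the bubble-interaction estimates used in Sections~3 and~4 but must be made uniform in $\xi\in M_\delta$ to secure a single $\varepsilon_0$. Once that is in place, the equivalence between critical points of $\tilde{E}_\varepsilon$ on $M_\delta$ and genuine solutions of~\eqref{rho_2} of the form $\sum PU_i + \phi^\varepsilon_\xi$ follows at once.
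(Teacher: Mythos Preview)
Your argument is correct and follows essentially the same route as the paper: both write the gradient of $\tilde E_\varepsilon$ as $\sum_{s,t} c^\varepsilon_{st}\,\langle P\Psi^t_s,\partial_{(\xi_h)_j}u_\xi\rangle$ and prove the resulting linear system in the $c^\varepsilon_{st}$ is nondegenerate by showing the matrix is diagonally dominated at scale $\varepsilon^{-2}$, using the orthogonality $\langle P\Psi^t_s,\phi^\varepsilon_\xi\rangle=0$ and the bubble-interaction estimates (Lemma~\ref{lem4}, Remark~\ref{rklem4}, and the bound $\langle P\Psi^t_s,\partial_{(\xi_h)_j}\phi^\varepsilon_\xi\rangle=-\langle\partial_{(\xi_h)_j}P\Psi^t_s,\phi^\varepsilon_\xi\rangle=o(\varepsilon^{-2})$). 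One small caveat: $\partial_{(\xi_h)_j}PU_h$ is \emph{not} literally $P\Psi^j_h$ (cf.\ Remark~\ref{rklemb3}), so your phrase ``to leading order $\partial_{(\xi_h)_j}PU_h$ is $\pm P\Psi^j_h$'' should be replaced by the precise statement of Remark~\ref{rklem4}, which gives exactly the needed asymptotics $\langle P\Psi^t_s,\partial_{(\xi_h)_j}PU_h\rangle=\tfrac{8\varrho(\xi_h)D_t}{\pi\tau_h^2\varepsilon^2}\delta_{sh}\delta_{tj}+\mathcal{O}(\varepsilon^{\alpha_0-1})$ and makes the diagonal dominance rigorous.
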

	\begin{proof}
		Denote  $\phi:=\phi^{\varepsilon}_{\xi}$ to simplify the notations. 
		Assume that $\xi$ is a critical point of the reduced map $\tilde{E}_{\varepsilon}(\xi)$. Then $\xi$ satisfies 
		\begin{eqnarray}~\label{r1}
			\partial_{(\xi_i)_j}\tilde{E}_{\varepsilon}(\xi)=0,
		\end{eqnarray}
		for any $i=1, \cdots, k+l$ and $j=1,\cdots,\ii(\xi_i)$. 
		
		By \eqref{pi} of Proposition~\ref{thm2}, 
		$\sum_{h=1}^{k+l} PU_h +\phi -i^*\left( \varepsilon^2 Ve^{\sum_{h=1}^{k+l} PU_h +\phi} \right)
		= \sum_{s=1}^{k+l}\sum_{t=1}^{\ii(\xi_s)} c^{\varepsilon}_{st} P\Psi_s^{t}, $
		where $c^{\varepsilon}_{st}$ are coefficients. 
		Then ,
		\begin{equation}~\label{oc}
			\left\la   \sum_{s=1}^{k+l}\sum_{t=1}^{\ii(\xi_s)} c^{\varepsilon}_{st} P\Psi_s^{t}   ,\partial_{(\xi_i)_j}PU_i+\sum_{h=1}^{k+l}  P \Psi_{i}^{0}\partial_{\left(\xi_{i}\right)_{j}} \tau_{h}(\xi)+\partial_{\left(\xi_{i}\right)_{j}} \phi\right\ra =0.
		\end{equation}
		Applying~\eqref{occ} and  and~\eqref{oc}, we derive that 
		$$
		\sum_{s=1}^{k+l} \sum_{t=1}^{\ii(\xi_s)} c_{st}^{\varepsilon}\left\la P \Psi_{s}^{t},\partial_{(\xi_i)_j}PU_i\right\ra=o\left(\frac{1}{\varepsilon^{2}} \sum_{s=1}^{k+l} \sum_{t=1}^{\ii(\xi_s)}\left|c_{st}^{\varepsilon}\right|\right).
		$$
		By Remark \ref{rklem4}, we conclude that $c_{ij}^{\varepsilon}=0$ for any $i=1, \cdots, k+l$ and $j=1,\cdots,\ii(\xi_i)$.
		Thus 
		\begin{eqnarray}~\label{r2}
			\sum_{h=1}^{k+l} P U_{h}+\phi-i^{*}\left(\varepsilon^{2} e^{\sum_{h=1}^{k+l} P U_{h}+\phi}\right)=0. 
		\end{eqnarray}

		Conversely, suppose  $\sum_{h=1}^{k+l} PU_h +\phi^{\varepsilon}_{\xi}$ is a weak solution to~\eqref{rho_2} in $\oH$  for $\xi\in \Xi^\prime_{k,l}$. Then, there exists $\delta>0$ sufficiently small such that  $\xi\in \mathrm{\Xi_{k,l}^{\prime}}$ and  \eqref{r2} is verified. Hence, \eqref{r1} holds true, leading to the conclusion that $\xi$ is a critical point of the reduced function $\tilde{E}_{\varepsilon}(\xi)$. 
	\end{proof}
	\section{Proof of the Main Result}
	Now, we are ready to prove the main results.\\ 
	\begin{altproof}{Theorem~\ref{main_thm}}	Let $K$ be a stable critical point set of $\cF^{V}_{k,l}$.  As  $\varepsilon\rightarrow 0$  there exists  a sequence of points $\xi^{\varepsilon}=\left(\xi^\varepsilon_{1}, \cdots, \xi^\varepsilon_{k+l}\right) \in \Xi_{k,l}$ such that $d_g(\xi^{\varepsilon},K)\rightarrow 0$ and  $\xi_{\varepsilon}$ is a critical point of $\tilde{E}_{\varepsilon}: \Xi^\prime_{k,l}\rightarrow \mathbb{R}$. Assume that up to a subsequence 
		\[ \xi^{\varepsilon}=\left(\xi^\varepsilon_{1}, \cdots, \xi^\varepsilon_{m }\right)\rightarrow \xi=(\xi_1,\cdots,\xi_{k+l})\in K, \]
		as $\varepsilon\rightarrow 0.$
		Define 
		$u_{\varepsilon}=$ $\sum_{i=1}^{k+l} P U_{\tau_{i}\left(\xi^{\varepsilon}\right), \xi^{\varepsilon}_{i}}+\phi_{\xi^{\varepsilon}}^{\varepsilon}$. According to Proposition~\ref{thm4}, 
		$u_{\varepsilon}$ solves \eqref{rho_2} as $\varepsilon\rightarrow 0$, which means that  $u_{\varepsilon}$ solves problem~\eqref{eq:main_eq} in the weak sense for some $\lambda:=\lambda_\varepsilon= \varepsilon^2\int_{\Sigma} V e^{u_\varepsilon} \, dv_g$. 
		Applying Lemma~\ref{lemb2}, Lemma~\ref{lem6} and Lemma~\ref{lem8}, 
		$\lambda= 4\pi m +o(1), \text{ as } \varepsilon\rightarrow 0. $\\
		{\it Claim. For any $\Psi \in C(\Sigma),$
			$
			\varepsilon^{2} \int_{\Sigma}  V e^{u_{\varepsilon}} \Psi \, dv_g  \rightarrow  \sum_{i=1}^{k+l} \varrho(\xi_i) \Psi\left(\xi_{i}\right), \text { as } \varepsilon\rightarrow 0.
			$}
		In fact, by the inequality $\left|e^{s}-1\right| \leqslant e^{|s|}|s|$ for any $s \in \mathbb{R}$ and   Lemma~\ref{lem5}, we have 
		\begin{eqnarray*}
			\varepsilon^{2} \int_{\Sigma} V e^{u_{\varepsilon}} \Psi \, dv_g& =&\varepsilon^{2} \int_{\Sigma} V e^{\sum_{i=1}^{k+l} P U_{i}} \Psi \, dv_g+o(1)=
			\sum_{ i=1}^{k+l} \int_{\Sigma}  \varepsilon^2\chi_{\xi_{i}} e^{U_i}\Psi \, dv_g +o(1) \\
			&=& \sum_{i=1}^{k+l} \varrho(\xi_i) \Psi\left(\xi_{i}\right)+o(1),
		\end{eqnarray*} 
		as $\varepsilon\rightarrow 0.$
		Therefore, $u_\varepsilon$ is a family of blow-up solutions of \eqref{eq:main_eq} as $\varepsilon\rightarrow 0$. The proof is concluded. 
	\end{altproof}
	
	\begin{altproof}{Corollary~\ref{cor:mf}}
		The set of global minimum points $\cK_{k,l}$ is a $C^1$-stable critical point set of $\cF^V_{k,l}$.  There exists $\delta>0$ sufficiently smalll such that  $\cK_{k,l}\subset\subset M_{\delta}$ given by~\eqref{eq:def_M_delta}. 
		As demonstrated in the proof of Theorem~\ref{main_thm}, for any $\varepsilon>0$ sufficiently small we can construct $\xi^{\varepsilon} \in M_{\delta}$ and $\lambda_\varepsilon$ such that up to a subsequence
		$ \xi^{\varepsilon} \rightarrow \xi\in \cK_{k,l},$  $ \lambda_\varepsilon\rightarrow 4\pi m=4\pi m,$ 
		and $u_{\varepsilon}=\sum_{i=1}^{k+l} PU_{\tau_i(\xi^\varepsilon),\xi^\varepsilon_{i}}+\phi^{\varepsilon}_{\xi^{\varepsilon}}$ solving~\eqref{eq:main_eq} for the parameter $\lambda_\varepsilon$.
		It follows that 
		$$
		\cF^{V}_{k,l}(\xi^{\varepsilon}) \rightarrow \cF^{V}_{k,l}(\xi)=\min_{\xi \in \Xi_{k,l}} \cF^{V}_{k,l}(\xi), \quad \text{as }\varepsilon\rightarrow 0.
		$$
		We recall the following expansion from Proposition~\ref{thm3},
		\begin{equation*}
			\widetilde{E}_{\varepsilon}(\xi) = \sum_{i=1}^{k+l} \varrho(\xi_i) (3\log 2 - 2\log \varepsilon) -2\sum_{i=1}^{k+l} \varrho(\xi_i) -\frac{1}{2}\cF^{V}_{k,l}(\xi) +o(1)
		\end{equation*} 
		in $C^1$-sense. 
		As $\varepsilon\rightarrow 0$, $u_{\varepsilon}$  is uniformly bounded on $\Sigma\setminus \cup_{i=1}^{k+l} U_{\epsilon}(\xi_{i}^\varepsilon)$ 
		for any $\epsilon>0$ and  $\sup_{U_{\epsilon}(\xi_{i})} u_{\varepsilon}\rightarrow +\infty,$  as $\varepsilon\rightarrow 0.$
		Lemma~\ref{lemb1} implies that 
		\[ u_{\varepsilon}= -2\sum_{i=1}^{k+l} \chi\left(|y_{\xi^\varepsilon_{i}}(x)|/r_0\right) \log (\varepsilon^2\tau^2_i(\xi^\varepsilon)+ |y_{\xi^\varepsilon_{i}}(x)|^2)+ \mathcal{O}(1), \text{ as } \varepsilon\rightarrow 0.\]
		There exists a constant $C>0$ independent with $\varepsilon$ such that  around $\xi_i$
		\[ u_{\varepsilon}\leq C+2\log \frac{1}{\varepsilon \tau_i(\xi^\varepsilon) }, \text{ for any }|y_{\xi^\varepsilon_i}(x)|\geq \sqrt{\varepsilon \tau_i(\xi^\varepsilon)} \]
		While for any $ |y_{\xi^\varepsilon_i}(x)|< \varepsilon ^2\tau_i(\xi^\varepsilon)$, 
		we have 
		$ u_{\varepsilon}\geq -C + 4\log\frac 1 {\varepsilon \tau_i(\xi^\varepsilon)}. $
		It follows that for all sufficiently small $\varepsilon>0$, we have   for $i=1,\cdots, k+l$
		$$ \max_{U_{r_0}(\xi^\varepsilon_{i})}u_{\varepsilon}=\max\left\{u_{\varepsilon}(x): |y_{\xi^\varepsilon_{i}}(x)|< (\varepsilon \tau_i(\xi^\varepsilon))^{\frac 1 2} \right\}.  
		$$
		Then, there exists $\tilde{\xi}_{\varepsilon
			,i}$ satisfying that $ |y_{\xi_{\varepsilon
				,i}}(\tilde{\xi}_{\varepsilon 
			,i})|< \sqrt{\varepsilon \tau_i(\xi^\varepsilon)}$ attaining the local maximum of $u_{\varepsilon}$ for any $i=1,\cdots, k+l $. 
		Moreover, $\tilde{\xi}_{\varepsilon
		}:=(\tilde{\xi}_{\varepsilon
			,1},\cdots,\tilde{\xi}_{\varepsilon,m})\rightarrow \xi$ and 
		\[ \cF^{V}_{k,l}(\tilde{\xi}_{\varepsilon})\rightarrow \min_{\xi\in \Xi_{k,l}^\prime} \cF^{V}_{k,l}(\xi), \text{ as }\varepsilon\rightarrow 0. \] 
		Applying Theorem~\ref{main_thm}, we can conclude the proof.
	\end{altproof}

	\begin{appendices}
		\section{Regularity theory for Neumann boundary conditions}
		
		\begin{lemma}\label{lem:Lp_est}
			Let $(\Sigma)$ be a compact Riemann surface with smooth boundary $\partial \Sigma$. For any $\beta\geq 0$, if $f \in L^2(\Sigma, g)$ satisfies 	$$
			\int_{\Sigma} f =0,
			$$ then there exists a unique weak solution of
			\begin{equation}
				\label{eq:poission}	\left\{\begin{array}{lll}
					-\Delta_g u+\beta u=f & \text { in } &\Sigma \\
					\partial_{\nu_g} u =0& \text { on } &\partial \Sigma \\
					\int_{\Sigma} u \, dv_g=0 & & 
				\end{array}\right.,
			\end{equation}
			i.e. there exists  a unique $ u \in \overline{H}^1(\Sigma)$ satisfying  
			$$
			\int_{\Sigma} \lan\nabla u, \nabla\varphi \ran_g \, dv_g+\beta\int_{\Sigma} u\varphi \, dv_g=\int_{\Sigma} f \varphi \, dv_g+ \int_{\partial\Sigma} h \varphi ds_g, \quad \forall \varphi \in  H^1(\Sigma).
			$$
			Moreover, for any $p>1$ if $f\in L^p(\Sigma)$, there exists a $u\in W_0^{2,p}(\Sigma):= W^{2,p}(\Sigma)\cap \{ u: \int_{\Sigma} u \, dv_g =0\}$ solving~\eqref{eq:poission}	with the following $W^{2,p}$-estimate: 
			$$
			\|u\|_{W^{2,p}(\Sigma)} \leq C|f|_{L^p(\Sigma)} .
			$$
		\end{lemma}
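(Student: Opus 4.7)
The plan is to split the argument into two parts: first, existence and uniqueness of the weak solution via Lax–Milgram; second, the $W^{2,p}$-regularity via a standard localization on isothermal charts.

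For the weak theory, I would consider on $H^1(\Sigma)$ the continuous symmetric bilinear form
\[
B(u,v):=\int_\Sigma \langle\nabla u,\nabla v\rangle_g\,dv_g+\beta\int_\Sigma uv\,dv_g
\]
and the continuous linear functional $\ell(v):=\int_\Sigma fv\,dv_g$. When $\beta>0$, $B$ is coercive on the full space $H^1(\Sigma)$, so Lax–Milgram gives a unique $\tilde u\in H^1(\Sigma)$ with $B(\tilde u,\varphi)=\ell(\varphi)$ for every $\varphi\in H^1(\Sigma)$; testing against $\varphi\equiv 1$ and using $\int_\Sigma f\,dv_g=0$ forces $\beta\int_\Sigma \tilde u\,dv_g=0$, hence $\tilde u\in\bar H^1(\Sigma)$. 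When $\beta=0$, I would instead restrict $B$ and $\ell$ to $\bar H^1(\Sigma)$, where the Poincaré–Wirtinger inequality $|u|_{L^2}\le C\|\nabla u\|_{L^2}$ (valid on the compact surface $\Sigma$) makes $B$ coercive, and Lax–Milgram again yields a unique $u\in\bar H^1(\Sigma)$. In both cases uniqueness is automatic from coercivity on $\bar H^1(\Sigma)$.

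For the $W^{2,p}$-estimate, I would fix a finite atlas of isothermal charts of the type introduced in Section~\ref{prelim}: interior charts with image in a disk, and boundary charts with image in a half-disk with $\partial\Sigma$ corresponding to $\{y_2=0\}$. In each such chart $g=e^{\hat\varphi_\xi}\sum dx^i\otimes dx^i$, so the equation $-\Delta_g u+\beta u=f$ reads, in Euclidean coordinates,
\[
-\Delta u+\beta e^{\hat\varphi_\xi} u=e^{\hat\varphi_\xi} f,
\]
and by \eqref{eq:out_normal_derivatives} the Neumann condition $\partial_{\nu_g}u=0$ becomes $\partial_{y_2}u=0$ on $\{y_2=0\}$ in boundary charts. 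I would then apply the classical interior $L^p$-estimates (Calderón–Zygmund) on interior charts, and the boundary $L^p$-estimates of Agmon–Douglis–Nirenberg for the flat Neumann problem on the half-disk (the Neumann condition satisfying the Lopatinskii–Shapiro complementing condition). Multiplying by cut-offs from a partition of unity subordinate to the atlas, and controlling commutator terms with $\Delta$, one obtains local bounds $\|\chi_\alpha u\|_{W^{2,p}}\le C\bigl(|f|_{L^p(\Sigma)}+|u|_{L^p(\Sigma)}\bigr)$ on each chart. Summing gives the global inequality $\|u\|_{W^{2,p}(\Sigma)}\le C\bigl(|f|_{L^p(\Sigma)}+|u|_{L^p(\Sigma)}\bigr)$. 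Finally, I would absorb the $|u|_{L^p}$ term using the constraint $\int_\Sigma u\,dv_g=0$, via Poincaré–Wirtinger combined with a compactness-contradiction argument (otherwise a sequence $u_n$ with $|u_n|_{L^p}=1$, $\|f_n\|_{L^p}\to 0$ and zero mean would, by the uniform $W^{2,p}$-bound and Rellich, converge to a nonzero constant solving the problem with $f=0$, contradicting the weak uniqueness established in Step 1).

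The only mildly delicate point is the boundary half-disk estimate: one must verify that the Neumann condition is a regular elliptic boundary condition in the sense of Agmon–Douglis–Nirenberg and that the lower-order coefficients $\beta e^{\hat\varphi_\xi}\in L^\infty$ are harmless, but these are textbook facts (see e.g. Gilbarg–Trudinger or Grisvard) and the rest of the argument is a straightforward patching exercise combined with the uniqueness already proved.
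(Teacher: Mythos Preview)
Your proposal is correct and complete. It differs from the paper's proof in two mild ways. For existence, the paper uses direct minimization of the energy functional $J(u)=\tfrac12\int_\Sigma(|\nabla u|_g^2+\beta u^2)\,dv_g-\int_\Sigma fu\,dv_g$ on $\bar H^1(\Sigma)$ (bounded below by H\"older and Poincar\'e, then a standard weak-compactness and lower-semicontinuity argument), whereas you invoke Lax--Milgram; for a quadratic functional these are interchangeable, and your treatment of the cases $\beta>0$ and $\beta=0$ is perhaps slightly cleaner. For the boundary $W^{2,p}$-estimate, the paper does not appeal to Agmon--Douglis--Nirenberg but instead uses the even reflection across $\{y_2=0\}$ (legitimate precisely because the Neumann condition becomes $\partial_{y_2}u=0$ in isothermal coordinates) to reduce the half-disk to a full disk, after which only the interior Calder\'on--Zygmund estimate is needed. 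Your route via ADN is more general; the paper's reflection trick is more elementary and avoids checking the complementing condition. Your final absorption of the $|u|_{L^p}$ term by a compactness-contradiction argument is a point the paper leaves implicit, so your write-up is in that respect more careful.
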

		For the Poisson equation with homogeneous  Neumann boundary condition, the $L^p$-estimate was proven in \cite[Lemma 5]{yang2021125440}. And we can deduce \eqref{eq:poission} by the same approach. 
		\begin{proof}
			For the uniqueness, we assume that $u_1, u_2$ are two weak solutions of  \eqref{eq:poission}  in $\oH$. It follows that 
			\[	\int_{\Sigma} \lan\nabla (u_1-u_2), \nabla\varphi \ran_g \, dv_g+\beta\int_{\Sigma} (u_1-u_2)\varphi \, dv_g=0,  \]
			for any $\varphi\in H^1(\Sigma)$. Then, 
			$u_1=u_2$ up to the addition of a constant. Observing that $\int_{\Sigma} u_1 \, dv_g =\int_{\Sigma} u_2 \, dv_g =0$, we deduce that $u_1\equiv u_2.$
			
			We will prove the existence of solutions using  variational methods. 
			Consider the energy functional 
			\[ J(u)= \frac 1 2 \int_{\Sigma} ( |\nabla u|^2_g +\beta u^2) \, dv_g -\int_{\Sigma} f u \, dv_g. \]
			Applying the H\"{o}lder inequality and the Poincar\'{e} inequality, we deduce that 
			\begin{eqnarray*}
				\left| \int_{\Sigma} f u \, dv_g\right|\leq  |f|_{L^2(\Sigma)}|u|_{L^2(\Sigma)}\leq 
				\|f|_{L^2(\Sigma)}|\nabla u|_{L^2(\Sigma)},
			\end{eqnarray*}
			which yields that $J$ has a lower bound in $\oH$. Let $u_n$ be a sequence in $\oH$ such that $J$ attains the minimum value, i.e.
			\[ \lim _{n\rightarrow +\infty} J(u_n)= \inf _{u\in \oH} J(u). \]
			For any $n\in \N_+$, 
			$
			J(u_n) \geq \frac 1 2 \|u_n\|^2 - C |f|_{L^2(\Sigma)}\|u_n\|. 
			$
			Given that $ \inf _{u\in \oH} J(u)\leq J(0)=0$, $u_n$ is uniformly bounded in $\oH$. 
			Up to a subsequence, we assume that  $u_n$ converges to some $u_0\in \oH$ weakly. By the Rellich–Kondrachov theorem, 
			$u_n\rightarrow u_0$ strongly in $L^q(\Sigma)$ for any $q>1$ and almost everywhere. Fatou's lemma implies that 
			\[ J(u_0) \leq \liminf _{n\rightarrow +\infty} J(u_n)= \inf_{u\in \oH} J(u). \] 
			Thus, $u_0$ is a minimizer of $J(u)$ on $\oH$. 
			
			Next, we consider the $W^{2,p}$-estimates of the solutions. 
			Employing the isothermal coordinates introduced in Section~\ref{prelim} it is sufficient to prove the $L^p$-regularity locally in an open disk or half-disk in $\R^2$. Specifically, in the case of a half-disk, we can extend the problem by the reflection of the $x$-axis to a full open disk, considering that $\partial_{\nu_g} u=0$ on the boundary. This extension allows for the application of the standard local $L^p$-theory, thereby we can establish the $L^p$-regularity for the Neumann boundary problem~\eqref{eq:poission} on a compact Riemann surface $\Sigma$.
		\end{proof}
		Let $W^{s,p}_{\partial}(\Sigma):= \{ h|_{\partial\Sigma}: h\in W^{s,p}(\Sigma)\}$ equipped with the norm $$\|h\|_{W^{s,p}_{\partial}(\Sigma)}:= \inf \left\{ \|\psi\|_{W^{s,p}(\Sigma)}: \psi\in W^{s,p}(\Sigma) \text{ with } \psi|_{\partial\Sigma}=h\right\},$$
		for any $s\in \N$ and $p\in (1, +\infty)$. 
		For the inhomogeneous boudnary condition, we have the following $L^p$-theory:
		\begin{lemma}[Theorem 3.2 of~\cite{Wehrheim2004}]\label{lem:inh_LP}
			Suppose that $f\in L^p(\Sigma)$ and $h\in W^{1,p}_{\partial}(\Sigma)$. Let $u$ be a weak solution with $\int_{\Sigma} u \, dv_g =0$ of 
			\begin{equation*}
				\left\{\begin{array}{lll}
					-\Delta_g u+\beta u=f & \text { in } &\Sigma \\
					\partial_{\nu_g} u =h& \text { on } &\partial \Sigma \\
					\int_{\Sigma} u \, dv_g=0 & & 
				\end{array}\right..
			\end{equation*}
			Then, $u\in W^{2,p}(\Sigma)$ with the estimate 
			\[ \|u\|_{W^{2,p}(\Sigma)}\leq C\left( |f|_{L^p(\Sigma)}+ \|h\|_{W^{1,p}_{\partial}(\Sigma)}\right). \]
		\end{lemma}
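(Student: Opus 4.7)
The plan is to reduce the inhomogeneous Neumann problem to the homogeneous one already handled in Lemma~\ref{lem:Lp_est} by subtracting off a suitable lift of the boundary data. Given $h\in W^{1,p}_{\partial}(\Sigma)$, the first step is to choose a function $\psi\in W^{2,p}(\Sigma)$ whose normal derivative on $\partial\Sigma$ equals $h$ and which satisfies the bound $\|\psi\|_{W^{2,p}(\Sigma)}\leq C\|h\|_{W^{1,p}_{\partial}(\Sigma)}$. Such a lift can be produced by the standard trace/extension theory: working in the isothermal coordinates introduced in Section~\ref{prelim} near $\partial\Sigma$, one flattens the boundary to a segment of $\{y_2=0\}$ and defines $\psi$ locally as an explicit polynomial in $y_2$ with coefficient $-h$ in the linear term (times the conformal factor from \eqref{eq:out_normal_derivatives}), glued to $0$ in the interior by a partition of unity. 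The $W^{2,p}$ norm of this local construction is controlled by $\|h\|_{W^{1,p}_{\partial}(\Sigma)}$ by definition of the trace norm.

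Next, set $\tilde u := u-\psi+c_0$, where the constant $c_0$ is chosen so that $\int_\Sigma \tilde u\, dv_g=0$. Then $\tilde u$ is the weak solution in $\oH$ of
\begin{equation*}
\left\{\begin{array}{ll}
-\Delta_g \tilde u+\beta \tilde u= \tilde f & \text{in }\intsigma\\
\partial_{\nu_g}\tilde u=0 & \text{on }\partial\Sigma\\
\int_{\Sigma}\tilde u\, dv_g=0 &
\end{array}\right.,
\end{equation*}
with right-hand side $\tilde f:= f+\Delta_g\psi-\beta\psi+\beta c_0$. Since $\psi\in W^{2,p}(\Sigma)$, we have $\tilde f\in L^p(\Sigma)$ with $|\tilde f|_{L^p(\Sigma)}\leq C(|f|_{L^p(\Sigma)}+\|\psi\|_{W^{2,p}(\Sigma)})$. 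Moreover, the divergence theorem forces $\int_\Sigma\tilde f\, dv_g=0$ (this is precisely the compatibility condition, which is inherited from the equation for $u$ by a short computation using $\partial_{\nu_g}u=h$). Lemma~\ref{lem:Lp_est} therefore applies and yields $\tilde u\in W^{2,p}(\Sigma)$ with $\|\tilde u\|_{W^{2,p}(\Sigma)}\leq C|\tilde f|_{L^p(\Sigma)}$. Adding back $\psi-c_0$ and estimating the constant $c_0$ by $\|\psi\|_{L^1(\Sigma)}\lesssim\|h\|_{W^{1,p}_\partial(\Sigma)}$ (using $\int_\Sigma u\, dv_g=0$) produces the claimed estimate for $u$.

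The main obstacle is the construction and control of the lift $\psi$, since one must show that the natural trace space for $\partial_{\nu_g}u$ of a $W^{2,p}$ function is exactly $W^{1,p}_\partial(\Sigma)$ as defined in the excerpt. On a compact Riemann surface with smooth boundary this requires combining (i) the conformal flattening of $\partial\Sigma$ via the isothermal coordinates and \eqref{eq:out_normal_derivatives}, which reduces the computation to flat half-disks, and (ii) the flat trace theorem asserting that $u\mapsto (\partial_{y_2}u)|_{\{y_2=0\}}$ maps $W^{2,p}(\R^2_+)$ onto $W^{1-1/p,p}(\R)$ with a bounded right inverse. The definition of $\|h\|_{W^{1,p}_\partial(\Sigma)}$ as the infimum of $W^{1,p}$-norms of extensions makes the bound on $\psi$ essentially tautological once such a local right inverse is in hand. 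Everything else reduces to bookkeeping with a finite partition of unity subordinate to the isothermal charts covering $\partial\Sigma$ and to interior charts, after which one applies the already-proven Lemma~\ref{lem:Lp_est}.
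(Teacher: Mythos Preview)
Your argument is correct. The reduction to the homogeneous Neumann problem via a lift $\psi\in W^{2,p}(\Sigma)$ with $\partial_{\nu_g}\psi=h$ is a standard and clean route; the compatibility $\int_\Sigma\tilde f\,dv_g=0$ indeed follows from the hypotheses (testing the weak formulation for $u$ with $\varphi\equiv1$ gives $\int_\Sigma f+\int_{\partial\Sigma}h=0$, and then $\int_\Sigma\Delta_g\psi=\int_{\partial\Sigma}h$ by the divergence theorem, so the terms cancel). The only place to tighten is your description of $\psi$ as ``an explicit polynomial in $y_2$ with coefficient $-h$'': the naive formula $\psi=-y_2\,h(y_1)$ does not directly land in $W^{2,p}$ when $h$ is merely in $W^{1-1/p,p}$; you correctly fall back on the flat trace theorem (bounded right inverse for $W^{2,p}\to W^{1-1/p,p}$, $u\mapsto\partial_{y_2}u|_{y_2=0}$) later in the paragraph, and that is what actually does the work.

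The paper does not supply its own argument here: it labels the lemma as Theorem~3.2 of \cite{Wehrheim2004}, cites \cite{Agmon1959} and \cite{Wehrheim2004} for the case $\beta=0$, and remarks that the same approach handles $\beta>0$. That approach is the direct Agmon--Douglis--Nirenberg localization-and-flattening strategy, proving the boundary $W^{2,p}$ estimate from scratch rather than reducing to a previously established homogeneous case. Your route is more self-contained within the paper's framework, since it leverages Lemma~\ref{lem:Lp_est} and pushes all the hard analysis into the trace/extension theorem; the cited references instead develop the inhomogeneous estimate in one pass.
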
 
		For the case $\beta = 0$, we refer to~\cite{Agmon1959} and~\cite{Wehrheim2004}. By the same approach, Lemma~\ref{lem:inh_LP} can be proven for $\beta > 0$; hence, we omit the details.
		
		Next, we consider the Schauder estimates for the Neumann boundary condition on compact Riemann surfaces. 
		\begin{lemma}
			\label{lem:schauder}
			For any given $\alpha\in(0,1),\beta\geq 0$,  let $(\Sigma,g)$ be a compact Riemann surface with boundary in $C^{2, \alpha}$-class and let $f \in C^{ \alpha}(\Sigma), h \in C^{1, \alpha}(\Sigma)$ such that:
			\begin{eqnarray}
				\label{eq:f_h} 	\int_{\Sigma} f  \, dv_g=\int_{\partial \Sigma} h  ds_g.
			\end{eqnarray}
			Then, there exists a unique solution to the problem
			\begin{equation}\label{eq:schauder_c2_alpha}
				\left\{\begin{array}{ll}
					-\Delta_g u+\beta u=f & \text {in } \Sigma \\ 
					\partial_{ \nu_g} u=h& \text { on } \partial \Sigma\\
				\end{array}\right.
			\end{equation}
			in the space 
			$ C_0^{2, \alpha}(\Sigma):= C^{2,\alpha}(\Sigma)\cap \{ u : \int_{\Sigma} u \, dv_g =0\}.
			$
			Moreover, it has the following Schauder estimate: 
			$
			\left\|u\right\|_{C^{2, \alpha}(\Sigma)} \leq C\left(\|f\|_{C^{ \alpha}(\Sigma)}+\|h\|_{C^{1, \alpha}(\Sigma)}\right),
			$
			where $C>0$ is a constant. 
		\end{lemma}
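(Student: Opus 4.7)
The plan is to obtain existence from the $L^{p}$-theory already developed in Lemma~\ref{lem:inh_LP}, to establish uniqueness by a standard energy computation using the zero-mean normalization, and then to upgrade regularity to $C^{2,\alpha}$ by a local Schauder argument in isothermal coordinates.

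First I would prove uniqueness. If $u_{1},u_{2}\in C_{0}^{2,\alpha}(\Sigma)$ both solve \eqref{eq:schauder_c2_alpha}, then $w:=u_{1}-u_{2}$ satisfies $-\Delta_{g}w+\beta w=0$ in $\Sigma$ with $\partial_{\nu_{g}}w=0$ on $\partial\Sigma$ and $\int_{\Sigma}w\,dv_{g}=0$. Testing against $w$ and integrating by parts gives $\int_{\Sigma}|\nabla w|_{g}^{2}\,dv_{g}+\beta\int_{\Sigma}w^{2}\,dv_{g}=0$, so $w$ is constant, and the zero-mean condition forces $w\equiv0$. This argument works for both $\beta=0$ and $\beta>0$.

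For existence, I would choose $p>2$ large enough that $W^{2,p}(\Sigma)\hookrightarrow C^{1,\alpha}(\Sigma)$ by Sobolev embedding (using $\dim\Sigma=2$). Since $f\in C^{\alpha}(\Sigma)\subset L^{p}(\Sigma)$ and $h\in C^{1,\alpha}(\Sigma)$, and the compatibility condition \eqref{eq:f_h} is satisfied, Lemma~\ref{lem:inh_LP} yields a weak solution $u\in W^{2,p}(\Sigma)$ with $\int_{\Sigma}u\,dv_{g}=0$ and the bound $\|u\|_{W^{2,p}(\Sigma)}\leq C(|f|_{L^{p}(\Sigma)}+\|h\|_{W^{1,p}_{\partial}(\Sigma)})$. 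In particular, $u\in C^{1,\alpha}(\Sigma)$.

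The core step is the Schauder bootstrap. I would cover $\Sigma$ by finitely many isothermal charts of the kind introduced in Section~\ref{prelim}: interior charts $(U(\xi),y_{\xi})$ in which $g=e^{\hat{\varphi}_{\xi}}(dx^{1}\otimes dx^{1}+dx^{2}\otimes dx^{2})$, and boundary charts in which additionally $y_{\xi}(U(\xi)\cap\partial\Sigma)\subset\{y_{2}=0\}$. In any such chart the equation reads
\begin{equation*}
-\Delta \tilde{u}+\beta e^{\hat{\varphi}_{\xi}}\tilde{u}=e^{\hat{\varphi}_{\xi}}\tilde{f},
\end{equation*}
where $\tilde{u}=u\circ y_{\xi}^{-1}$ and $\tilde{f}=f\circ y_{\xi}^{-1}$; and by \eqref{eq:out_normal_derivatives} the Neumann condition on a boundary chart becomes $\partial_{y_{2}}\tilde{u}=-e^{\hat{\varphi}_{\xi}/2}\tilde{h}$ on $\{y_{2}=0\}$. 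Since $\hat{\varphi}_{\xi}\in C^{2,\alpha}$ (because $\partial\Sigma$ is $C^{2,\alpha}$) and the coefficients are $C^{\alpha}$, classical interior Schauder estimates handle the interior charts, while the half-disk boundary charts fall under the classical Schauder theory for the oblique (here, strictly normal) derivative problem on a flat boundary, giving
\begin{equation*}
\|\tilde{u}\|_{C^{2,\alpha}(B_{r/2}^{\xi})}\leq C\bigl(\|\tilde{f}\|_{C^{\alpha}(B_{r}^{\xi})}+\|e^{\hat{\varphi}_{\xi}/2}\tilde{h}\|_{C^{1,\alpha}(B_{r}^{\xi}\cap\{y_{2}=0\})}+\|\tilde{u}\|_{C^{0}(B_{r}^{\xi})}\bigr).
\end{equation*}
Patching the local estimates with a partition of unity subordinate to a finite covering, using $\|u\|_{C^{0}(\Sigma)}\leq C\|u\|_{W^{2,p}(\Sigma)}$ from Step~2 to absorb the zero-order term, yields the global Schauder bound $\|u\|_{C^{2,\alpha}(\Sigma)}\leq C(\|f\|_{C^{\alpha}(\Sigma)}+\|h\|_{C^{1,\alpha}(\Sigma)})$.

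The only genuinely delicate point is the boundary Schauder estimate: one must verify that the conformal factor and the smoothness of the boundary straightening keep the transformed problem within the scope of the classical half-space Schauder theorem for Neumann data; beyond this, the argument is a standard covering-plus-interpolation bootstrap, and I would omit the routine details as is done in the paper for Lemma~\ref{lem:inh_LP}.
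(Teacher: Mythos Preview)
Your proof is correct and follows essentially the same route as the paper: reduce to local interior and oblique-derivative boundary Schauder estimates via isothermal charts, then patch to obtain the global estimate. The only cosmetic differences are that the paper absorbs the $\|u\|_{C^{0}}$ term by invoking \cite[Theorem~6.31]{GilbargTrudinger2001} rather than your $W^{2,p}$ bound, and it obtains existence for inhomogeneous $h$ via the Fredholm alternative after first treating the homogeneous case through Lemma~\ref{lem:Lp_est}, whereas you appeal directly to Lemma~\ref{lem:inh_LP} (note that lemma is stated as a regularity result, so strictly speaking you still need a one-line reduction to the homogeneous problem, or a variational argument, to get the weak solution whose regularity you then upgrade).
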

		
		We refer to the Schauder interior estimates for domains as in \cite{GilbargTrudinger2001}.
		
		\begin{theorem}[Corollary~6.3 of \cite{GilbargTrudinger2001}]\label{thm:inter_schauder}
			Let $\Omega$ be an open subset of $\mathbb{R}^n$ and let $u \in C^{2,\alpha}(\Omega)$ be a bounded solution in $\Omega$ of the equation
			$
			Lu = a^{ij}D_{ij}u + b^iD_iu + cu = f,
			$
			where $f \in C^{\alpha}(\Omega)$ and there are positive constants $\lambda, \Lambda$ such that the coefficients satisfy
			$
			a^{ij}\xi_i\xi_j \geq \lambda|\xi|^2, \text{ for any } x \in \Omega, \xi \in \mathbb{R}^n
			$
			and
			$
			\|a^{ij}\|_{C^0(\Omega)} + \|b^i\|_{C^0(\Omega)} + \|c\|_{C^0(\Omega)} \leq \Lambda.
			$
			Then we have the interior estimate: for any $\Omega^{\prime} \subset\subset \Omega$, 
			\begin{equation}
				\|u\|_{C^{2,\alpha}(\Omega^{\prime})} \leq C(\|u\|_{C^0(\Omega)} + \|f\|_{C^{\alpha}(\Omega)})
				\label{6.14}
			\end{equation}
			where $C = C(n, \Omega^{\prime}, \alpha, \lambda, \Lambda)$ is a constant.
		\end{theorem}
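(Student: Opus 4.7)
The plan is to proceed by the classical freezing-of-coefficients method, reducing the variable-coefficient estimate to the constant-coefficient Schauder estimate for the Laplacian, and then absorbing lower-order terms by interpolation and a covering argument.

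First I would establish the constant-coefficient estimate: for any $v \in C^{2,\alpha}(B_R)$ solving $\Delta v = g$ with $g \in C^{\alpha}(B_R)$, one has
$$
\|v\|_{C^{2,\alpha}(B_{R/2})} \leq C\big(\|v\|_{C^{0}(B_R)} + \|g\|_{C^{\alpha}(B_R)}\big).
$$
This is the technical heart of the proof and would be carried out via the Newtonian potential representation: writing $v$ as $N\ast g$ plus a harmonic correction, the H\"older continuity of $D^{2}v$ reduces to estimating the second derivatives of a Calder\'on--Zygmund-type convolution, where the $C^{\alpha}$-modulus of $g$ is exploited by splitting the integration region into a ball near the singularity and its complement. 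An equivalent route is the Campanato characterization of $C^{k,\alpha}$-spaces through decay of $L^{2}$-mean oscillation on small balls, combined with the sharp mean-value decay properties of harmonic functions. A linear change of variables by $(a^{ij}(x_0))^{-1/2}$ then transfers the estimate to any constant-coefficient operator $L_0=a^{ij}(x_0)D_{ij}$, with constants depending only on $n,\lambda,\Lambda$.

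Next I would freeze coefficients at an arbitrary $x_0\in\Omega'$ on a small ball $B_r(x_0)\subset\Omega$. Rewriting $Lu=f$ as
$$
a^{ij}(x_0)D_{ij}u \;=\; f - b^{i}D_{i}u - cu + [a^{ij}(x_0)-a^{ij}(x)]D_{ij}u,
$$
I apply the constant-coefficient estimate to $u$ on $B_r(x_0)$. Using the assumption $[a^{ij}]_{C^{\alpha}(\Omega)}\leq \Lambda$ one has $|a^{ij}(x_0)-a^{ij}(x)|\leq \Lambda r^{\alpha}$ on $B_r(x_0)$, so the perturbation contributes a term of the form $C\Lambda r^{\alpha}\,[D^{2}u]_{C^{\alpha}(B_r)}$, which can be absorbed into the left-hand side provided $r$ is chosen sufficiently small (depending only on $\lambda,\Lambda,\alpha$). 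The lower-order terms $b^{i}D_{i}u$ and $cu$ are controlled via the interpolation inequalities
$$
\|u\|_{C^{1}(B_r)} + \|u\|_{C^{2}(B_r)} \;\leq\; \varepsilon\,[D^{2}u]_{C^{\alpha}(B_r)} + C_{\varepsilon}\|u\|_{C^{0}(B_r)},
$$
again absorbing into the left-hand side. A finite covering of $\overline{\Omega'}$ by such balls, together with the standard weighted-H\"older-norm covering argument (to patch the local bounds while respecting the distance from $\partial\Omega$), yields the global interior estimate on $\Omega'$ with a constant of the claimed form $C(n,\Omega',\alpha,\lambda,\Lambda)$.

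The main obstacle is the first step, namely the sharp constant-coefficient estimate $[D^{2}v]_{C^{\alpha}(B_{R/2})}\leq C\,[\Delta v]_{C^{\alpha}(B_R)}$. The required singular-integral analysis of the Newton potential (or equivalently the Campanato mean-oscillation decay) is the only genuinely delicate piece of the argument; once it is in hand, the perturbation/absorption/covering scheme outlined above is essentially routine.
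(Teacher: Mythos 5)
This statement is quoted in the paper verbatim from Gilbarg--Trudinger (Corollary 6.3) without proof, and your outline --- the constant-coefficient Schauder estimate via the Newtonian potential (equivalently, Campanato mean-oscillation decay), freezing of coefficients with absorption of the $C\Lambda r^{\alpha}[D^2u]_{C^\alpha}$ perturbation for small $r$, interpolation to control the lower-order terms, and a weighted-norm covering argument --- is precisely the argument given in that reference, so it matches the intended proof. The only point worth flagging is that your argument (correctly) invokes a H\"older bound $[a^{ij}]_{C^{\alpha}(\Omega)}\leq\Lambda$ on the leading coefficients, which is genuinely needed for the absorption step, whereas the hypothesis as transcribed in the paper lists only $C^{0}$-bounds on the coefficients; your proof thus silently supplies the regularity assumption that the printed statement omits.
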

		The Schauder estimate with oblique derivative boundary conditions is as follows:
		\begin{theorem}[Lemma 6.29 of \cite{GilbargTrudinger2001}]\label{thm:b_schauder}
			Let $\Omega$ be a bounded open set in $\mathbb{R}^n_+$ with a boundary portion $T$ on $x_n = 0$. Suppose that $u \in C^{2,\alpha}(\Omega \cup T)$ is a solution in $\Omega$ of $Lu = f$ (as in~Theorem~\ref{thm:inter_schauder}) satisfying the boundary condition
			\begin{equation}
				N(x')u = \gamma(x')u + \sum_{i=1}^{n} \beta_i(x')D_iu = h(x'), \quad x' \in T,
			\end{equation}
			where $|\beta_n| \geq \kappa > 0$ for some constant $\kappa$. Assume that $f \in C^{\alpha}(\Omega)$, $h \in C^{1,\alpha}(T)$, $a^{ij}, b^{i}, c \in C^{\alpha}(\Omega)$ and $\gamma, \beta_i \in C^{1,\alpha}(T)$ with
			\[
			\|a^{ij}, b^i, c\|_{C^{0,\alpha}(\Omega)}, \|\gamma, \beta_i\|_{C^{1,\alpha}(T)} \leq \Lambda, \quad i, j = 1, \cdots, n.
			\]
			Then for any $\Omega^{\prime}\subset\subset \Omega\cup T$, 
			\begin{equation}
				\|u\|_{C^{2,\alpha}(\Omega^{\prime})} \leq C(\|u\|_{C^0(\Omega)} + \|h\|_{C^{1,\alpha}(T)} + \|f\|_{C^{\alpha}(\Omega)}),
			\end{equation}
			where $C = C(n, \Omega^{\prime}, \alpha, \lambda, \kappa, \Lambda, \text{diam}\ \Omega)$ is a constant.
		\end{theorem}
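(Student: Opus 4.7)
The plan is to reproduce the classical Schauder-estimate scheme from \S6 of Gilbarg--Trudinger, combining the already-stated interior estimate (Theorem~\ref{thm:inter_schauder}) with a flat-boundary estimate for the oblique-derivative problem, and gluing them via a finite covering argument.

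First I would treat the constant-coefficient model problem. Freeze the coefficients $a^{ij}$, $\beta_i$, $\gamma$ at a boundary point $x_0\in T$; an affine change of coordinates reduces the frozen operator $a^{ij}(x_0)D_{ij}$ to the Laplacian $\Delta$ on $\mathbb{R}^n_+$. Since $|\beta_n|\ge\kappa>0$, after this change the transformed boundary operator still has nonzero coefficient on $\partial_{x_n}$; a further linear tilt of the $x_n$-axis along the direction $(\beta_1,\ldots,\beta_n)$ reduces the boundary condition to the pure Neumann condition $\partial_{x_n}v=0$ on $\{x_n=0\}$. For the resulting model problem $\Delta v=g$ in a half-ball with $\partial_n v=0$ on the flat part, even reflection across $\{x_n=0\}$ produces a $C^{2,\alpha}$-solution on the full ball to which Theorem~\ref{thm:inter_schauder} applies, yielding the sought model bound on the half-ball.

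Next I would establish the variable-coefficient boundary estimate by the standard perturbation scheme. In a small half-ball $B_r^+(x_0)$, write $Lu=a^{ij}(x_0)D_{ij}u+\bigl(a^{ij}(x)-a^{ij}(x_0)\bigr)D_{ij}u+b^iD_iu+cu$ and similarly for $N$. Controlling $[D^2u]_{\alpha;\,B_r^+}$ by comparison with the frozen model via tangential difference quotients $\tau_h u=u(\cdot+he_i)-u$ for $i<n$ (which satisfy a shifted oblique-derivative problem), the perturbation contributes an error proportional to $r^\alpha[D^2u]_\alpha$ that is absorbed into the left-hand side by choosing $r$ small depending on $\lambda,\Lambda,\kappa$. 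The remaining normal-normal component $D_{nn}u$ is recovered algebraically from the equation by solving $a^{nn}D_{nn}u=f-\sum_{(i,j)\neq(n,n)}a^{ij}D_{ij}u-b^iD_iu-cu$, while the lower-order terms are absorbed using the interpolation inequality $\|u\|_{C^{1,\alpha}}\le \varepsilon\|u\|_{C^{2,\alpha}}+C(\varepsilon)\|u\|_{C^0}$.

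Finally, a finite covering of $\overline{\Omega'}\subset\subset\Omega\cup T$ by interior balls and boundary half-balls, each of radius smaller than the perturbation threshold, yields the global estimate: interior balls are handled by Theorem~\ref{thm:inter_schauder}, and boundary half-balls by the estimate from the previous step. Summing the local bounds and absorbing the $C^{1,\alpha}$-norm via interpolation produces the stated inequality with constant depending only on $n,\Omega',\alpha,\lambda,\kappa,\Lambda$ and $\operatorname{diam}\Omega$. The hard part is the second step: tracking the Hölder seminorm of mixed second derivatives near $T$ under both the frozen-coefficient perturbation and the cutoff localisation, since the oblique boundary condition is only preserved up to a lower-order error. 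Enforcing this preservation carefully (by tangential differencing and algebraic recovery of $D_{nn}u$) is the technical core, after which the absorption and covering are routine.
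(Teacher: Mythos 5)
This statement is not proved in the paper at all: it is quoted verbatim as Lemma~6.29 of Gilbarg--Trudinger and used as a black box, so the only meaningful comparison is with the textbook argument itself. Your outline reproduces that argument's structure (frozen-coefficient model problem, perturbation with absorption for small radius, algebraic recovery of $D_{nn}u$, interpolation, finite covering) and is correct in its overall architecture.

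The one step that does not work as written is the model problem. After freezing the coefficients and tilting the $x_n$-axis along $\beta(x_0)$, the boundary condition you must handle is not the homogeneous Neumann condition but $D_n v = \tilde h$ on $\{x_n=0\}$, where $\tilde h$ collects $h$, the zeroth-order term $\gamma u$, and the coefficient-difference errors $(\beta_i(x)-\beta_i(x_0))D_i u$; these are precisely the terms the perturbation scheme pushes into the boundary data, and $\tilde h$ is genuinely nonzero. Even reflection across $\{x_n=0\}$ requires $\tilde h\equiv 0$, and the natural repair --- subtracting $x_n H(x)$ with $H$ a $C^{1,\alpha}$ extension of $\tilde h$ --- fails because $\Delta(x_n H)=2D_nH+x_n\Delta H$ is not controlled in $C^{\alpha}$ when $H$ is only $C^{1,\alpha}$. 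The classical treatment (Lemma~6.26 in Gilbarg--Trudinger) instead establishes the inhomogeneous half-space estimate directly through the Neumann Green's function and potential-theoretic bounds, and your proof needs that ingredient (or an equivalent) before the perturbation and covering steps, which are otherwise routine as you describe.
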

		
		\begin{altproof}{Lemma~\ref{lem:schauder}}
			By combining the isothermal coordinates  with the results from Theorem~\ref{thm:inter_schauder} and Theorem~\ref{thm:b_schauder}, we can infer the lemma.
			
			We consider  $u\in C^{2,\alpha}(\Sigma)$ solving~\eqref{eq:schauder_c2_alpha}. For each point $\zeta\in\Sigma$, there exists an isothermal chart  $(U(\zeta), y_{\zeta})$  defined in Section~\ref{prelim}.
			Given the compactness of $\Sigma$, it can be expressed as a finite union of local charts:
			$$\Sigma= \bigcup_{i=1}^{l_1+l_2} U_{r_{\zeta_i
			}}(\zeta_i),$$
			where $\zeta_i\in \intsigma,$ for $i=1,\cdots, l_1$ and $\zeta_i\in\partial\Sigma$ for $i=l_1+1,\cdots, l_1+l_2$ and $U_{r_{\zeta_i}}\subset U(\zeta_i)$.  
			Applying Theorem~\ref{thm:inter_schauder}, for each $i=1,\cdots, l_1$, 
			\[ 	\|u\|_{C^{2, \alpha}(U_{r_{\zeta_i}}(\zeta_i))} \leq C( \|u\|_{C^0(U(\zeta_i))}+\|f\|_{C^{\alpha}(U(\zeta_i))}).\]
			Then, utilizing the method in \cite[Theorem 6.31]{GilbargTrudinger2001}, we estimate $\|u\|_{C^0(U(\zeta_i))}$ in term of $\|f\|_{C^0(\Sigma)}.$
			Consequently, 
			\[ 	\|u\|_{C^{2, \alpha}(U_{r_{\zeta_i}}(\zeta_i))} \leq C( \|f\|_{C^{\alpha}(\Sigma)}). \]
			Similarly, 
			Theorem~\ref{thm:b_schauder} implies that  for $i=l_1+1,\cdots, l_1+l_2$, 
			\[ 	\|u\|_{C^{2,\alpha}(U_{r_{\zeta_i}}(\zeta_i))} \leq C(\|u\|_{C^0(U(\zeta_i))} +\|h\|_{_{C^{1,\alpha}(U(\zeta_i)\cap \partial\Sigma)}}+ \|f\|_{C^{\alpha}(\Sigma)}).\]
			\cite[Theorem~6.31]{GilbargTrudinger2001} yields that 
			$\|u\|_{C^0(U(\zeta_i))}\leq C \|f\|_{C^0(\Sigma)}. $ It follows that 
			\[ 	\|u\|_{C^{2, \alpha}(U_{r_{\zeta_i}}(\zeta_i))} \leq C( \|f\|_{C^{\alpha}(\Sigma)}). \]
			Summing up the local Schauder estimates for $i=1,\cdots, l_1+l_2$, we deduce that
			\begin{equation}\label{eq:schauder_ex}
				\|u\|_{C^{2, \alpha}(\Sigma)} \leq C( \|f\|_{C^{\alpha}(\Sigma)}+\|h\|_{C^{1,\alpha}(\Sigma)}) .
			\end{equation}
			
			Applying Lemma~\ref{lem:Lp_est}, when $h\equiv 0$ we have a unique solution $u\in W^{2,2}(\Sigma)$ solving~\eqref{eq:schauder_c2_alpha}. Then the estimate~\eqref{eq:schauder_ex} implies $u\in C^{2,\alpha}(\Sigma).$  Due to the Fredholm alternative mentioned in \cite[P. 130]{GilbargTrudinger2001},  for any inhomogeneous $h\in C^{2,\alpha}(\Sigma)$ satisfying~\eqref{eq:f_h}, there exists a unique solution $u\in C^{2,\alpha}_0(\Sigma)$ of~\eqref{eq:schauder_c2_alpha}. 
		\end{altproof}
		\begin{lemma}~\label{lem:re_green}
			For any fixed $\xi\in \Sigma$ and $\alpha\in (0,1)$, 
			$H^g_{\xi}$ is $C^{1,\alpha}$-smooth. Moreover, $ H^g_{\xi}$ is uniformly bounded in $C^{1,\alpha}(\Sigma)$ for any $\xi$ in any compact subset of $\intsigma$ or on $\partial\Sigma$.\\
		\end{lemma}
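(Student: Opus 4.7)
The plan is to treat \eqref{eqR} as a linear Neumann problem for $H^g_\xi$, verify that its right-hand side and its boundary data are uniformly controlled in a scale of function spaces, and then bootstrap via the $L^p$-theory of Lemma~\ref{lem:inh_LP} followed by Sobolev embedding. More precisely, writing the equation as $(-\Delta_g+\beta)H^g_\xi=f_\xi$ in $\intsigma$ with $\partial_{\nu_g}H^g_\xi=h_\xi$ on $\partial\Sigma$, I would first show that $f_\xi\in L^p(\Sigma)$ for every $p\in[1,\infty)$ and that $h_\xi\in C^\infty(\partial\Sigma)$, with norms that are uniformly bounded as $\xi$ varies in any compact $K\subset\intsigma$ or as $\xi$ varies on $\partial\Sigma$.

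Next I would analyze each summand of $f_\xi$ in the isothermal chart $(U(\xi),y_\xi)$. The singular piece $\chi_\xi\log(1/|y_\xi|)$ has the integrable logarithmic singularity $\int_{B_{2r_0}}|\log|y||^p\,dy<\infty$ for every $p<\infty$, and its $L^p$-norm is uniformly controlled because $y_\xi$ and $\hat\varphi_\xi$ depend smoothly on $\xi$ with a uniform radius $r_0$ (for $\xi$ ranging in the compact set in question). The cross terms $(\Delta_g\chi_\xi)\log(1/|y_\xi|)$ and $\langle\nabla\chi_\xi,\nabla\log(1/|y_\xi|)\rangle_g$ are supported in the annular region where $\chi_\xi$ is non-constant, i.e.\ away from $\xi$ where $|y_\xi|$ is bounded below, hence these are uniformly bounded in $L^\infty(\Sigma)$. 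Therefore $|f_\xi|_{L^p(\Sigma)}\le C(p,K)$ for each $p<\infty$.

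For the boundary datum I split into two cases. If $\xi\in\intsigma$ lies in a compact subset $K\subset\intsigma$, then by choosing $r_0$ smaller than $\tfrac12 d_g(K,\partial\Sigma)$ the cut-off $\chi_\xi$ is supported away from $\partial\Sigma$, so $h_\xi\equiv 0$. If $\xi\in\partial\Sigma$, I work in the half-disk isothermal chart adapted to the boundary; the second piece $\chi_\xi\partial_{\nu_g}\log(1/|y_\xi|)$ on $\partial\Sigma$ corresponds, via \eqref{eq:out_normal_derivatives}, to $e^{-\hat\varphi_\xi/2}\partial_{y_2}\log(1/|y_\xi|)\big|_{y_2=0}=-\frac{y_2}{|y|^2}e^{-\hat\varphi_\xi/2}\big|_{y_2=0}=0$, and the remaining term $(\partial_{\nu_g}\chi_\xi)\log(1/|y_\xi|)$ is supported in the annulus away from $\xi$, hence smooth and uniformly bounded on $\partial\Sigma$. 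In both cases $\|h_\xi\|_{W^{1,p}_{\partial}(\Sigma)}\le C(p,K)$ uniformly.

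With these two estimates, for any $\alpha\in(0,1)$ I choose $p=2/(1-\alpha)>2$, apply Lemma~\ref{lem:inh_LP} to conclude $H^g_\xi\in W^{2,p}(\Sigma)$ with
\[
\|H^g_\xi\|_{W^{2,p}(\Sigma)}\le C\bigl(|f_\xi|_{L^p(\Sigma)}+\|h_\xi\|_{W^{1,p}_\partial(\Sigma)}\bigr)\le C(K,\alpha),
\]
and then invoke the Sobolev embedding $W^{2,p}(\Sigma)\hookrightarrow C^{1,\alpha}(\Sigma)$, which is valid because $p>2$ and $\alpha=1-2/p$. This gives both the $C^{1,\alpha}$-regularity of $H^g_\xi$ and the claimed uniform bound. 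The main technical point where I would need to be careful is the verification that the isothermal data $y_\xi$, $\hat\varphi_\xi$, $\chi_\xi$ provide uniform $L^p$-control of the log-singular source term as $\xi$ varies — this relies on the smooth dependence of the isothermal chart on the base point stated in Section~\ref{prelim}, plus a change of variables that absorbs the Jacobian $e^{\hat\varphi_\xi}$ into a uniformly bounded factor on $B^\xi_{2r_0}$.
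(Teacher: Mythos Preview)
Your proposal is correct and follows essentially the same route as the paper's proof: verify that the Neumann boundary datum $h_\xi$ is smooth (using the key identity $\partial_{\nu_g}\log|y_\xi|=0$ on $\partial\Sigma$ via \eqref{eq:out_normal_derivatives}), check that the source $f_\xi$ lies in $L^p(\Sigma)$ for all $p<\infty$ with uniform bounds, then apply Lemma~\ref{lem:inh_LP} with $p=2/(1-\alpha)$ and the Sobolev embedding $W^{2,p}\hookrightarrow C^{1,\alpha}$. The only minor point you glossed over is that Lemma~\ref{lem:inh_LP} is stated for functions with zero average, so strictly one bounds $H^g_\xi-\overline{H^g_\xi}$ first and then recovers $H^g_\xi$ using the uniform control on $\int_\Sigma H^g_\xi\,dv_g$ from the third line of \eqref{eqR}; the paper does this explicitly.
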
 
		\begin{proof}
			We apply the isothermal coordinate $(y_{\xi}, U(\xi))$ introduced in Section~\ref{prelim}.
			By the transformation law for $\Delta_g$ under a conformal map, $\Delta_{\tilde{g}}=e^{-\varphi}\Delta_g$ for any $\tilde{g}=e^{\varphi} g$. It follows that  
			$ \Delta_g\left( \log\frac 1{ |y_{\xi}(x)|}\right) =e^{-{\varphi}_{\xi}(y)} \left. \Delta \log\frac 1 {|y|}\right|_{y= y_{\xi}(x)}= -\frac{\varrho(\xi)} 4 \delta_{\xi},$
			where $\delta_{\xi}$ is the Dirac mass concentrated at $\xi\in \Sigma$.  
			For any $x\in U(\xi)\cap \partial \Sigma$, 
			\begin{equation*}
				\begin{aligned}
					\partial_{ \nu_g } \log|y_{\xi}(x)| &
					\stackrel{\eqref{eq:out_normal_derivatives}}{=} \left. - e^{-\frac 1 2 {\varphi}_{\xi}(y)}\frac {\partial}{\partial y_2} \log |y| \right|_{y=y_{\xi}(x)}= \left. - e^{-\frac 1 2 {\varphi}_{\xi}(y)}\frac{y_2}{|y|^2}\right|_{y=y_{\xi}(x)}\equiv 0. 
				\end{aligned}
			\end{equation*}
			Clearly, $\partial_{\nu_g} \chi(|y_{\xi}(x)|)=0$ for $x \in \partial\Sigma \cap  U_{r_0}(\xi)$.
			It follows that that $\partial_{ \nu_g } H^g(\cdot,\xi)$ is smooth on  $\partial \Sigma$. 
			$\Delta_gH^g(\cdot,\xi)$ is bounded in $L^p(\Sigma)$, for any $p\geq 1$. 
			Using the $L^p$-estimate in Lemma~\ref{lem:inh_LP}, we derive that 
			\begin{eqnarray*}
				\|H^g_{\xi} -\overline{H^g_{\xi}}\|_{C^{2,\alpha}(\Sigma)}&\leq& C (\|\partial_{\nu_g} H^g_{\xi}\|_{W_{\partial}^{1,p}(\Sigma)}+
				|-\Delta_g H^g_{\xi}|_{L^p(\Sigma)})
			\end{eqnarray*}
			for same constant $C>0$ which is independent with $\xi.$ Given $p=\frac{2}{1-\alpha}$  for any $\alpha\in (0,1)$, the Sobolev embedding theorem yields that  $H^g_{\xi}(x)$  in $C^{1,\alpha}(\Sigma)$. 
			Considering  that  $|-\Delta_g H^g(\cdot,\xi)|_{L^p(\Sigma) }$,  $\|\partial_{\nu_g}H^g_{\xi}\|_{C^{1,\alpha}(\partial\Sigma)}$ and $\left|\int_{\Sigma} H^g_{\xi} \, dv_g \right| $ are uniformly bounded  for any $\xi$ in any compact subset of $\intsigma$ or on  $\partial\Sigma$, we have $H^g_{\xi}(x)$ is uniformly bounded for any $\xi$ in any compact subset of $\intsigma$ or on $\partial\Sigma$. 
		\end{proof}

		\begin{lemma}~\label{H_INFTY} Suppose that $V>0$ on $\Sigma$. Then, for any $\xi \in \intsigma$, we have:
			\begin{equation*}
				R^g(\xi,\xi) = H^g(\xi,\xi) \rightarrow +\infty \text{ as } \xi \text{ approaches } \partial\Sigma.
			\end{equation*}
			Furthermore, for any $\xi = (\xi_1, \cdots, \xi_{k,l}) \in \Xi_{k,l}$, it holds that
			\[ \cF^V_{k,l}(\xi)\rightarrow +\infty,\]
			as $\xi$ approaches $\partial\Xi_{k,l}$.
		\end{lemma}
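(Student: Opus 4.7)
The approach splits into two stages: first I prove $R^g(\xi)=H^g(\xi,\xi)\to+\infty$ for interior $\xi$ approaching $\partial\Sigma$, then I combine this with the diagonal divergence of $G^g$ to deduce $\cF^V_{k,l}\to+\infty$. The governing heuristic for the Robin blow-up is the method of images for Neumann boundary conditions: near $\partial\Sigma$, the Green's function inherits a ``mirror image'' singularity at the reflected point $\xi^*$, and as $\xi$ approaches $\xi^*$ (that is, as $d_g(\xi,\partial\Sigma)\to 0$), this image term produces a divergent logarithmic contribution to the regular part.

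To implement this, fix $\xi_0\in\partial\Sigma$ and work in the isothermal chart $(U(\xi_0),y_{\xi_0})$, where $\partial\Sigma\cap U(\xi_0)$ corresponds to $\{y_2=0\}$. For $\xi\in\intsigma$ close to $\xi_0$, set $a:=y_{\xi_0}(\xi)=(a_1,a_2)$ with $a_2>0$ and its reflection $a^*:=(a_1,-a_2)$. Introduce the model
\[
\Phi(x):=\frac{1}{2\pi}\chi_{\xi_0}(x)\Bigl(\log\tfrac{1}{|y_{\xi_0}(x)-a|}+\log\tfrac{1}{|y_{\xi_0}(x)-a^*|}\Bigr).
\]
The reflection identity $\partial_{y_2}\bigl(\log|y-a|+\log|y-a^*|\bigr)\big|_{y_2=0}=0$ combined with $\partial_{y_2}\chi_{\xi_0}|_{y_2=0}=0$ gives $\partial_{\nu_g}\Phi|_{\partial\Sigma}=0$; moreover, since $-\Delta_g\log\frac{1}{|y-a|}=2\pi\delta_\xi$ locally while $a^*$ lies outside the half-disk image, the function $E:=G^g(\cdot,\xi)-\Phi+c_a$, with a bounded constant $c_a$ chosen so that $\int_\Sigma E\,dv_g=0$, solves a Neumann problem $(-\Delta_g+\beta)E=f_a$ with $\|f_a\|_{L^p(\Sigma)}$ uniformly bounded in $a_2$ for any $p>2$. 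Applying Lemma~\ref{lem:Lp_est} and Sobolev embedding yields $\|E\|_{C^{0,\alpha}(\Sigma)}\le C$ uniformly in $\xi$. Using $d_g(x,\xi)=e^{\hat\varphi_{\xi_0}(a)/2}|y_{\xi_0}(x)-a|(1+o(1))$ and $\hat\varphi_{\xi_0}(0,0)=0$, passing to the limit $x\to\xi$ in the definition of $R^g(\xi)$ yields
\[
R^g(\xi)=-\tfrac{1}{2\pi}\log|a-a^*|+O(1)=-\tfrac{1}{2\pi}\log(2a_2)+O(1)\longrightarrow+\infty
\]
as $a_2\sim d_g(\xi,\partial\Sigma)\to 0$. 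The principal technical obstacle is maintaining the uniformity of the bound on $E$ as $a_2\to 0^+$; it reduces to showing that the commutator terms $\nabla\chi_{\xi_0}\cdot\nabla\Phi$ and $(\Delta\chi_{\xi_0})\Phi$ in $-\Delta_g\Phi$ stay $L^p$-bounded, which holds because $\nabla\chi_{\xi_0}$ is supported in an annulus away from the origin on which both $\log|y-a|$ and $\log|y-a^*|$ stay regular as $a\to 0$.

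For the divergence of $\cF^V_{k,l}$, the hypothesis $V>0$ on the compact $\Sigma$ makes $\sum_i 2\varrho(\xi_i)\log V(\xi_i)$ uniformly bounded, hence harmless. The set $\partial\Xi_{k,l}$ decomposes into: (i) configurations where some interior index $i\le k$ has $d_g(\xi_i,\partial\Sigma)\to 0$, and (ii) the thick diagonal, where $d_g(\xi_i,\xi_j)\to 0$ for some $i\ne j$. In case (i), the first stage gives $\varrho(\xi_i)^2 R^g(\xi_i)\to+\infty$ with coefficient $\varrho(\xi_i)^2=64\pi^2>0$; in case (ii), the expansion $G^g(\xi_i,\xi_j)=-\frac{4}{\varrho(\xi_i)}\log d_g(\xi_i,\xi_j)+O(1)$ gives $\varrho(\xi_i)\varrho(\xi_j)G^g(\xi_i,\xi_j)\to+\infty$ with $\varrho(\xi_i)\varrho(\xi_j)>0$. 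All other contributions (Robin values at configuration points that remain in a compact subset of $\intsigma\cup\partial\Sigma$, and cross Green's function values for non-colliding pairs) remain bounded by continuity of $G^g$ off the diagonal and Lemma~\ref{lem:re_green}. Since every divergent contribution carries a positive coefficient and no term can diverge to $-\infty$, we conclude $\cF^V_{k,l}(\xi)\to+\infty$ as $\xi\to\partial\Xi_{k,l}$.
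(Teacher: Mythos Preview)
Your proof is correct, but the two stages take routes genuinely different from the paper's.

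For the Robin blow-up, the paper does not use the method of images. Instead, it writes $\tilde H^g(\xi,\xi)$ via the representation formula and divergence theorem, isolates the boundary integral
\[
-\tfrac{1}{4\pi^2}\int_{\partial\Sigma}\chi\,\log|y_\zeta(x)-y_\zeta(\xi)|\,\partial_{\nu_g}\log|y_\zeta(x)-y_\zeta(\xi)|\,ds_g,
\]
and estimates it directly to extract a $-\tfrac{1}{4\pi}\log|y_\zeta(\xi)_2|$ term. Your image construction $\Phi$ with elliptic control of the remainder $E$ is more geometric and yields the same leading order $-\tfrac{1}{2\pi}\log(2a_2)$; it also gives, as a byproduct, a uniform description of $G^g(\cdot,\xi)$ for interior $\xi$ near $\partial\Sigma$, which you implicitly reuse in Stage~2.

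For the divergence of $\cF^V_{k,l}$, the paper does not appeal to joint continuity of $G^g$ off the diagonal. It instead proves a uniform lower bound $G^g(\xi_i,\xi_j)\ge c_0$ for all $\xi_i\ne\xi_j$ via a maximum principle argument (subtracting the solution $h$ of an auxiliary Dirichlet problem to reduce to a superharmonic function, then bounding its boundary values). Your argument --- that $G^g$ is continuous on the compact set $\{(x,y):d_g(x,y)\ge\delta\}\subset\Sigma\times\Sigma$, hence bounded there --- is shorter but relies on a fact the paper does not explicitly establish (continuity of $\xi\mapsto G^g(\cdot,\xi)$ as $\xi$ crosses from $\intsigma$ to $\partial\Sigma$). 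This is true and in fact follows from your own Stage~1 construction, since $G^g(\cdot,\xi)=\Phi-c_a+E$ with $E,c_a$ uniformly bounded exhibits the continuous passage to the boundary Green's function; you might make that link explicit. The paper's maximum-principle route is more self-contained but introduces an extra auxiliary PDE.
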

		\begin{proof}
			Since $V(x) > 0$, for any $x \in \Sigma$ the function $\cF^V_{k,l}$ is well-defined on $$\Xi_{k,l} = \intsigma^k \times (\partial \Sigma)^l \setminus \F_{k,l}(\Sigma).$$
			For any $\zeta \in \partial\Sigma$, consider an isothermal chart $(y_{\zeta}, U(\zeta))$. Set $r_0 = r_{\zeta}/2$. Then, for any $\xi \in U_{r_{\zeta}}(\zeta)$, we decompose the Green's function as follows:
			$$G^g(x,\xi)= \tilde{H}^g(x,\xi)-\frac 4 {\varrho(\xi)}\chi\left(\frac{|y_{\zeta}(x)-y_{\zeta}(\xi)|}{r_0}\right)\log  {|y_{\zeta}(x)-y_{\zeta}(\xi) |},$$
			where $\chi$ is a cut-off function defined by~\eqref{eq:cut_off_i}.	Applying the representation formula and divergence theorem, for any $\xi\in U_{r_{\zeta}}(\zeta)$, we obtain
			\begin{eqnarray*}
				&&\tilde{H}^g(\xi,\xi)= \int_{\Sigma} G^g(x,\xi)(-\Delta_g+\beta) \tilde{H}^g(x,\xi) \, dv_g(x)+ \int_{\partial\Sigma}  G^g(x,\xi) \partial_{\nu_g}\tilde{H}^g(x,\xi) ds_g(x)+\mathcal{O}(1) \\
				&=& \int_{\Sigma} (|\nabla \tilde{H}^g(x,\xi)|_g^2 + \beta| \tilde{H}^g(x,\xi)|^2
				) \, dv_g(x)	- \frac 1 {4\pi^2} \int_{\partial\Sigma} \partial_{\nu_g}\left(\chi(|y_{\zeta}(x)-y_{\zeta}(\xi)|)\log {|y_{\zeta}(x)-y_{\zeta}(\xi) |}\right) \\
				&&\cdot \chi(|y_{\zeta}(x)-y_{\zeta}(\xi)|)\log  {|y_{\zeta}(x)-y_{\zeta}(\xi) |}  ds_g(x) +\mathcal{O}(1)\\
				&\geq & - \frac 1 {4\pi^2} \int_{\{x: |y_{\zeta}(x)-y_{\zeta}(\xi)|< r_0\}\cap \partial \Sigma} \log {|y_{\zeta}(x)-y_{\zeta}(\xi) |} \partial_{\nu_g} \log  {|y_{\zeta}(x)-y_{\zeta}(\xi) |} ds_g(x)+  \mathcal{O}(1) \\
				&\geq &\frac 1 {4\pi^2} \int_{ \{y: |y-y_{\zeta}(\xi)|< r_0\}\cap \partial\R^2_+} \frac {-y_{\zeta}(\xi)_2} {|y-y_{\zeta}(\xi) |^2} \log {|y-y_{\zeta}(\xi) |}  \,d y_1+  \mathcal{O}(1) \\
				&\geq &- \frac 1 {4\pi^2} \log ({|y_{\zeta}(\xi)_2|})\int_{\R} \frac {1}{1+s^2} ds+  \mathcal{O}(1) =- \frac{1}{4\pi } \log |y_{\zeta}(\xi)_2|+\mathcal{O}(1)\rightarrow +\infty ,
			\end{eqnarray*}
			as $ d_g(\xi,\partial\Sigma)\rightarrow 0$, where $ds_g$ is the line element of $\partial\Sigma$. 
			It is straightforward to see that $H^g(\xi,\xi)=\tilde{H}^g(\xi,\xi)$. The first statement is concluded. 
			
			Next, we assume that $\xi \in \Xi_{k,l}$.  
			\begin{claim}\label{claim:G^g}
				There exists a constant $c_0$ satisfying  $ G^g(\xi_i,\xi_j)\geq c_0$, for any $\xi_i\neq \xi_j$.
			\end{claim}
			Before proving Claim~\ref{claim:G^g} we first show how Lemma~\ref{H_INFTY} follows.
			We denote that 
			$  \mathcal{I}_0=\left\{i:  1\leq i\leq k, d_g(\xi_i, \partial\Sigma)\rightarrow 0 \text{ as } \xi \text{ going to  } \partial\Xi_{k,l}\right\}. $
			For any $i\in \mathcal{I}_0$, 
			$H^g(\xi_i,\xi_i)\rightarrow +\infty$. 
			There exists a compact subset set $F$ of $\intsigma$ such that  $\xi_i \in F$ for any $i\in \{1,\cdots,k\} \setminus \mathcal{I}_0$. 
			It follows that any $i\notin \mathcal{I}_0,$ $H^g(\xi_i,\xi_i)\geq - \sup_{x \in F} \|H^g_ x\|_{C(\Sigma)}>-\infty. $\\
			{\it Case I. $ \mathcal{I}_0\neq \emptyset$.} 
			As $\xi \text{ approaches } \partial\Xi_{k,l}$,
			\begin{eqnarray*}
				\cF^V_{k,l}(\xi)&\geq &\sum_{i\in  \mathcal{I}_0 } \varrho(\xi_i)^2 H^g(\xi_i,\xi_i) - \sum_{i\notin \mathcal{I}_0}  \sup_{x \in \partial\Sigma\cup F} \varrho(\xi_i)^2\|H^g(\cdot, x)\|_{C(\Sigma)}\\
				& &-\sum_{i\neq h} \varrho(\xi_i)\varrho(\xi_h)|c_0|+ \sum_{i=1}^{k+l} 2 \varrho(\xi_i) \inf_{x\in \Sigma} \log V(x) \rightarrow+\infty. 
			\end{eqnarray*}
			{\it Case II. $ \mathcal{I}_0= \emptyset$. }
			Then there exists a compact subset $F$ such that $\xi_i\in F$ for any $1\leq i\leq k$ and 
			\[\mathcal{I}_1:=\{(i,j): i,j=1,\cdots, k+l; i\neq j \text{ such that } d_g(\xi_i,\xi_j)\rightarrow 0 \text{ as }  \xi \rightarrow  \partial\Xi_{k,l} \}\] 
			is non-empty. 
			For any  $(i,j)\in \mathcal{I}_1$, 
			\begin{eqnarray*}
				G^g(\xi_i,\xi_j) &=& H^g(\xi_i,\xi_j) + \frac 4{\varrho(\xi_j)} \chi(|y_{\xi_j}(\xi_i)|/r_0) \log \frac 1 { |y_{\xi_j}(\xi_i)| } \\
				&\geq&  -  \sup_{x \in  F\cup \partial \Sigma} \|H^g(\cdot, x)\|_{C(\Sigma)}+c_1\frac 4 {\varrho(\xi_j)} \log \frac 1 {|d_g(\xi_i,\xi_j)|},
			\end{eqnarray*}
			in which $c_1>0$ is a constant. 
			Consequently, as $\xi \text{ approaches to } \partial\Xi_{k,l}$, 
			\begin{eqnarray*}
				\cF^V_{k,l}{(\xi)}&\geq&  -64\pi^2 (k+l)^2 c_0+ 64\pi^2 (k+l)  \sup_{x \in \partial\Sigma\cup F} \|H^g(\cdot, x)\|_{C(\Sigma)} \\
				&&+ c_1\sum_{(i,j)\in \mathcal{I}_1} \frac 4 {\varrho(\xi_i)}\log \frac 1 {d_g(\xi_i,\xi_j)} + \sum_{i=1}^{k+l} 2 \varrho(\xi_i) \inf_{x\in \Sigma} \log V(x)
				\rightarrow +\infty.
			\end{eqnarray*}
			{ \it It remains to establish Claim~\ref{claim:G^g}.}
			We begin by decomposing the Green's function as follows:
			\begin{eqnarray*}
				G^g(x,\xi_j)&=& H^g(x,\xi_j) +\frac 4 {\varrho(\xi_j)} \chi(|y_{\xi_j}(x)|/r_0) \log \frac 1 {|y_{\xi_j}(x)|}\\
				&\geq& - \|H^g(\cdot, \xi_j)\|_{C(\Sigma)} +  \frac 4 {\varrho(\xi_j)} \chi(|y_{\xi_j}(x)|/ r_0) \log \frac 1 {|y_{\xi_j}(x)|}.
			\end{eqnarray*}
			If $\xi_j\in \partial\Sigma$,  $\|H^g(\cdot, \xi_j)\|_{C(\Sigma)}$ is uniformly bounded. It is clear that $G^g(x,\xi_j)\geq c_0$, for some $c_0>0$. Thus, it suffices to focus on the cases where  $j=1,\cdots, k.$
			We observe that $G^g(x,\xi_j)\in C^{1,\alpha}_{loc}( \Sigma\setminus\{\xi_j\} )$ for any $\alpha\in (0,1)$ and $\lim_{x\rightarrow \xi_j}G^g(x,\xi_j)= +\infty.$
			Let $h(x)$ be the unique solution of the Dirichlet problem: 
			\[ \left\{\begin{array}{ll}
				(-\Delta_g+\beta) h(x)= -\frac 1 {|\Sigma|_g}, & x\in \intsigma\\
				h(x)=0 & x\in \partial\Sigma
			\end{array}\right.. \]
			Define that $\tilde{G}^g(x,\xi_j)= G^g(x,\xi_j) - h(x)$. Then, $-\Delta_g \tilde{G}^g(x,\xi_j)=0 $ on $\Sigma\setminus \{\xi_j\}$. Considering that  $\lim_{x\rightarrow \xi_j} \tilde{G}^g(x,\xi_j)=+\infty$,  it follows that 
			\[ \inf_{\Sigma\setminus \{\xi_j\} } \tilde{G}^g(x,\xi_j)= \min_{x\in \partial\Sigma} \tilde{G}^g(x,\xi_j), \]
			by the maximum principle.  
			Thus we have  for some constants $c_2, c_0>0$
			\begin{eqnarray*}
				\inf_{\Sigma\setminus \{\xi_j\} } {G}(x,\xi_j)
				&\geq &\inf_{\Sigma\setminus \{\xi_j\} } \tilde{G}^g(x,\xi_j)- \|h\|_{C(\Sigma)} \geq  \min_{x\in \partial\Sigma} \tilde{G}^g(x,\xi_j) -  \|h\|_{C(\Sigma)}
				\\
				&\geq&   \min_{x\in \partial\Sigma} {G}^g(\xi_j, x) - 2 \|h\|_{C(\Sigma)}\\
				&\geq &-\sup_{x\in \partial \Sigma}\|H^g_{x}\|_{C(\Sigma)} - 2 \|h\|_{C(\Sigma)}+c_2 \min_{x\in \partial\Sigma} \frac 1 {\pi}\log\frac 1 {d_g(x,\xi_j)}:=c_0.
			\end{eqnarray*}
		\end{proof}
		
		\section{Technique estimates}
		Firstly, this section will provide detailed proofs of crucial estimates for the projected bubbles $PU_{\tau,\xi}$ for $\tau\in (0,\infty)$ and $\xi\in \Sigma$.
		For any $\xi$  in a compact subset of  $\intsigma$ or $\partial \Sigma$, we set $r_{\xi}$ to be $2r_0$, where $r_0$ is a positive constant.

		The following lemma is the asymptotic expansion of $PU_{\tau,\xi}$  as $\varepsilon\rightarrow 0$. 
		\begin{lemma}~\label{lemb1}
			The function $P U_{\delta, \xi}$ satisfies
			$$
			P U_{\tau, \xi}=\chi_{\xi}\left(U_{\tau, \xi}-\log \left(8 \tau^2\right)\right)+\varrho(\xi) H^g(x, \xi)+{\mathcal{O}}(\varepsilon^{1+\alpha_0}) ,
			$$
			for any $\alpha_0\in (0,1)$ and the convergent is  locally uniform for $\xi$ in  $\intsigma$ and $\partial \Sigma$ and also locally uniform for $\tau$ in $(0, +\infty)$. 
			In particular, 
			$$
			P U_{\tau, \xi}=\varrho(\xi)G^g(x, \xi)+{\mathcal{O}}(\varepsilon^{1+\alpha_0}) , 
			$$
			locally uniformly in $\Sigma\backslash\{\xi\}$.
		\end{lemma}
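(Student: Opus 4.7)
The plan is to write $PU_{\tau,\xi} = V_{\tau,\xi} + c_\varepsilon + W$, where $V_{\tau,\xi} := \chi_\xi(U_{\tau,\xi} - \log(8\tau^2)) + \varrho(\xi) H^g_\xi$ is the proposed leading-order term, $c_\varepsilon\in \mathbb{R}$ is a constant chosen so that $\int_\Sigma W\, dv_g = 0$, and $W$ is a remainder to be controlled. The normalization constant is negligible: the integral condition on $H^g_\xi$ in \eqref{eqR} gives $\int_\Sigma V_{\tau,\xi}\, dv_g = -2\int_\Sigma \chi_\xi \log(1 + \tau^2\varepsilon^2/|y_\xi|^2)\, dv_g$, and the substitution $r=\tau\varepsilon s$ in polar coordinates shows this integral is $\mathcal{O}(\tau^2\varepsilon^2)$, so $c_\varepsilon = \mathcal{O}(\varepsilon^2)$.

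Next I would compute $(-\Delta_g + \beta)W$ and $\partial_{\nu_g} W$ explicitly. Using the identity $-\Delta_g U_{\tau,\xi} = \varepsilon^2 e^{-\varphi_\xi}e^{U_{\tau,\xi}}$ in the isothermal chart (inherited from $-\Delta u_{\tau,0} = \varepsilon^2 e^{u_{\tau,0}}$ on $\mathbb{R}^2$ via the conformal change), a product-rule expansion of $\Delta_g(\chi_\xi(U_{\tau,\xi}-\log(8\tau^2)))$ combined with \eqref{eqR} and subtraction from \eqref{eq:projection} gives $(-\Delta_g + \beta)W = R_1$ in $\intsigma$, where $R_1$ assembles four ingredients: the constant $\varrho(\xi)/|\Sigma|_g - \overline{\varepsilon^2 \chi_\xi e^{-\varphi_\xi}e^{U_{\tau,\xi}}}$, which is $\mathcal O(\varepsilon^2)$ by direct integration of the bubble profile on $B_{2r_0}$ (respectively the half-disk when $\xi\in\partial\Sigma$), giving $\varrho(\xi) + \mathcal O(\varepsilon^2)$; the bulk term $-2\beta\chi_\xi\log(1 + \tau^2\varepsilon^2/|y_\xi|^2)$ coming from the mismatch between $\chi_\xi(U_{\tau,\xi}-\log(8\tau^2))$ and $-4\chi_\xi\log(1/|y_\xi|)$, whose $L^p(\Sigma)$-norm for $p>1$ is $\mathcal O((\tau\varepsilon)^{2/p})$; and two commutator terms $(\Delta_g\chi_\xi)[-2\log(1+\tau^2\varepsilon^2/|y_\xi|^2)]$ and $2\langle \nabla\chi_\xi,\nabla[-2\log(1+\tau^2\varepsilon^2/|y_\xi|^2)]\rangle_g$ supported in $r_0\le |y_\xi|\le 2r_0$ and hence $\mathcal O(\varepsilon^2)$ in $L^\infty$. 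For the boundary source $R_2:= \partial_{\nu_g}W$, if $\xi$ is bounded away from $\partial\Sigma$ then $\chi_\xi$ vanishes near $\partial\Sigma$ and $R_2\equiv 0$; when $\xi\in\partial\Sigma$, the crucial observation is that $-2\log(1+\tau^2\varepsilon^2/|y|^2)$ is even in $y_2$, so its $y_2$-derivative vanishes on $\{y_2=0\}$ and the inner contribution $\chi_\xi\,\partial_{\nu_g}[\cdots]$ is identically zero on $\partial\Sigma$, leaving only $R_2 = -(\partial_{\nu_g}\chi_\xi)[-2\log(1+\tau^2\varepsilon^2/|y_\xi|^2)] = \mathcal O(\varepsilon^2)$ supported on $r_0\le|y_\xi|\le 2r_0$.

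Given these estimates, Lemma~\ref{lem:inh_LP} yields $\|W\|_{W^{2,p}(\Sigma)} \le C(\|R_1\|_{L^p(\Sigma)} + \|R_2\|_{W^{1,p}_\partial(\Sigma)}) = \mathcal O(\varepsilon^{2/p})$; choosing $p\in(1, 2/(1+\alpha_0))$ so that $2/p\ge 1+\alpha_0$, and using the Sobolev embedding $W^{2,p}(\Sigma)\hookrightarrow C^0(\Sigma)$ in dimension two, gives $\|W\|_{C^0}=\mathcal O(\varepsilon^{1+\alpha_0})$. Uniformity in $\xi$ over compact subsets of $\intsigma$ or of $\partial\Sigma$ and in $\tau$ over compact subsets of $(0,\infty)$ follows from the uniform choice of $r_0$ on such sets, the smooth $\xi$-dependence of $y_\xi$ and $\varphi_\xi$, and the uniform $C^{1,\alpha}$-bound on $H^g_\xi$ provided by Lemma~\ref{lem:re_green}. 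The ``in particular'' statement follows by rewriting $V_{\tau,\xi} = \varrho(\xi)G^g(x,\xi) - 2\chi_\xi\log(1+\tau^2\varepsilon^2/|y_\xi|^2)$ via the identities $\Gamma^g_\xi = \frac{4}{\varrho(\xi)}\chi_\xi\log(1/|y_\xi|)$ and $G^g = \Gamma^g_\xi + H^g_\xi$; on any closed subset of $\Sigma\setminus\{\xi\}$ the second summand is $\mathcal O(\varepsilon^2)$ pointwise. The principal delicate point is that the $\beta$-term $-2\beta\chi_\xi\log(1+\tau^2\varepsilon^2/|y_\xi|^2)$ is \emph{not} pointwise $\mathcal O(\varepsilon^2)$ as $|y_\xi|\to 0$; only its $L^p$-norm is small, which forces $p$ strictly greater than $1$ and restricts the attainable order to $\varepsilon^{1+\alpha_0}$ for $\alpha_0<1$.
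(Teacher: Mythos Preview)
Your proposal is correct and follows essentially the same route as the paper: define the remainder $\eta_{\tau,\xi}=PU_{\tau,\xi}-V_{\tau,\xi}$, compute its boundary normal derivative, its mean, and $(-\Delta_g+\beta)\eta_{\tau,\xi}$, identify the $\beta$-term $-2\beta\chi_\xi\log(1+\tau^2\varepsilon^2/|y_\xi|^2)$ as the bottleneck (only $\mathcal O(\varepsilon^{2/p})$ in $L^p$), and conclude via Lemma~\ref{lem:inh_LP} and Sobolev embedding. One small slip: the mean $c_\varepsilon$ is actually $\mathcal O(\varepsilon^2|\log\varepsilon|)$, not $\mathcal O(\varepsilon^2)$, because $\int_0^{R}\log(1+s^{-2})\,s\,ds\sim \log R$ as $R\to\infty$; this is harmless since $\varepsilon^2|\log\varepsilon|=o(\varepsilon^{1+\alpha_0})$ for every $\alpha_0<1$.
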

		\begin{proof}
			Let $\eta_{\tau,\xi}(x)= PU_{\tau,\xi}-\chi_{\xi}(U_{\tau,\xi}-\log 8\tau^2)  -\varrho(\xi) H^g(x,\xi) $. 
			If $\xi\in \intsigma$,  $$\partial _{\nu_g}\eta_{\tau,\xi}= 2\partial_{\nu_g} \chi_{\xi} \log \left(1+\frac{\tau^2\varepsilon^2}{|y_{\xi}(x)|^2}\right) -2 \chi_{\xi} \partial_{ \nu_g}\log \left(1+\frac{\tau^2\varepsilon^2}{|y_{\xi}(x)|^2}\right) \equiv 0$$ 
			on $\partial \Sigma$.  
			We observe that for any $x\in \partial\Sigma\cap U(\xi)$
			\[ \partial_{\nu_g} |y_{\xi}(x)|^2=- e^{-\frac 1 2 \hat{\varphi}_{\xi}(y)} \left. \frac{\partial}{\partial y_2} |y|^2\right|_{y=y_{\xi}(x)}= 0.\]
			If $\xi\in \partial \Sigma$,  
			for any $x\in \partial \Sigma$, as $\varepsilon\rightarrow 0.$
			\begin{eqnarray*}
				\partial_{ \nu_g}\eta_{\tau,\xi}(x)
				&=& 2(\partial_{ \nu_g}\chi_{\xi}) \frac{\tau^2\varepsilon^2}{|y_{\xi}(x)|^2} -2\chi_{\xi} \partial_{ \nu_g}\log \left(1+\frac{\tau^2\varepsilon^2}{|y_{\xi}(x)|^2}\right)+\mathcal{ O}(\varepsilon^4) =\mathcal{O}(\varepsilon^2). 
			\end{eqnarray*}
			Then for any $\xi\in \Sigma$ we have 	$
			\partial_{ \nu_g} \eta_{\tau,\xi} =
			\mathcal{ O}(\varepsilon^2). $
			For any  $A\subset \mathbb{R}^2$, denote $aA:=\{ ay: y\in A\}$. 
			\begin{eqnarray*}
				&&\int_{\Sigma} \eta_{\tau,\xi} \, dv_g =  -\int_{\Sigma} \chi_{\xi}( U_{\tau,\xi} -\log(8\tau^2)) +\varrho(x) \Gamma_{\xi}(x) \, dv_g(x) \\
				&=& -\int_{\Sigma} \chi_{\xi} \log \frac{ |y_{\xi}(x)|^4}{( \tau^2\varepsilon^2+|y_{\xi}(x)|^2)^2} \, dv_g(x)\\
				&=& 2\int_{B^{\xi}_{r_0}} \log \frac{\tau^2\varepsilon^2 +|y|^2}{ |y|^2} e^{-\hat{\varphi}_{\xi}(y)} \,d y +2 \int_{B^{\xi}_{2r_0}\setminus B_{r_0}(0)} \chi(|y|)
				\left( \frac{\tau^2\varepsilon^2 }{|y|^2} +\mathcal{ O}(\varepsilon^4)\right) e^{\hat{\varphi}_{\xi}(y)} \,d y\\
				&=&  2\tau^2\varepsilon^2 \int_{\frac 1 {\tau\varepsilon}(B^{\xi}_{r_0}\cap B_{r_0}(0))} \log \left( 1+ \frac 1 {|y|^2}\right) e^{-\hat{\varphi}(\tau\varepsilon y)} \, d y +\mathcal{ O}(\varepsilon^2)\\
				&=& 2\tau^2\varepsilon^2 (1+\mathcal{O}(\varepsilon)) \int_{B_{r_0/(\tau\varepsilon)}(0) }\log \left( 1+\frac 1 {|y|^2}\right) \, d y+\mathcal{O}(\varepsilon^2)={\mathcal{ O}}(\varepsilon^2|\log \varepsilon|), 
			\end{eqnarray*}
			where we applied 
			\begin{eqnarray*}
				&&\int_{|y|<\frac {r_0}{\tau\varepsilon}}     \log \left( 1+\frac 1 {|y|^2}\right) \, d y 
				=2\pi \int_0^{r_0/(\tau\varepsilon)} \log   \left( 1+\frac 1 {r^2}\right)r  dr\\
				&=& \pi \int_0^{r^2_0/(\tau\varepsilon)^2} \log   \left( 1+\frac 1 {t}\right) dt \\
				&=& \pi \frac{r_0^2}{\tau^2\varepsilon^2} \log\left(1+\frac{\tau^2\varepsilon^2}{r_0^2}\right)-\pi \int_0^{r_0^2/(\tau\varepsilon)^2} (1-\frac 1 {1+t}) dt \\
				&=& \pi \frac{r_0^2}{\tau^2\varepsilon^2}\left(1+\frac{\tau^2\varepsilon^2}{r_0^2}+ \mathcal{ O}(\varepsilon^4)\right) - \pi\frac{r_0^2}{\tau^2\varepsilon^2} + \pi \log \left( 1+ \frac{r_0^2}{\tau^2\varepsilon^2} \right)\\
				&=& \mathcal{ O}(|\log \varepsilon|).
			\end{eqnarray*}
			For any $x\in U_{2r_0}(\xi)$, $-\Delta_g U_{\tau,\xi}= e^{-\hat{\varphi}_{\xi}(y)} \Delta u_{\tau,0}|_{ y=y_{\xi}(x)} =e^{-\varphi_{\xi}} e^{U_{\tau,\xi}}$.  It follows that 
			\begin{eqnarray*}
				(-\Delta_{g}+\beta) \eta_{\tau,\xi}&=& (-\Delta _g+\beta)\left( PU_{\tau,\xi}-\chi_{\xi}(U_{\tau,\xi}-\log  8\tau^2)  -\varrho(\xi) H^g_{\xi}   \right)\\
				&=&
				(\Delta_g\chi_{\xi})\log\frac{|y_{\xi}|^4}{(\tau^2\varepsilon^2+|y_{\xi}|^2)^2}+2\lan \nabla\chi_{\xi},\nabla\log\frac{|y_{\xi}|^4}{(\tau^2\varepsilon^2+|y_{\xi}|^2)^2}\ran_g\\
				&&+\frac 1 {|\Sigma|_g} \left( \varrho(\xi)- \int_{\Sigma} \varepsilon^2\chi_{\xi} e^{-\varphi_{\xi}} e^{U_{\tau,\xi}} \, dv_g \right)+2\beta\log\left(1+\frac{\tau^2\varepsilon^2}{|y_{\xi}|^2}\right). 
			\end{eqnarray*}
			We observe that $\Delta_{g} \chi_{\xi}\equiv 0$ and $\nabla \chi_{\xi} \equiv 0$ in $U_{2r_0}(\xi)\setminus U_{r_0}(\xi)$. 
			For any $x\in A_{r_0}(\xi)$, we have
			\[ U_{\tau,\xi}-\log(8\tau^2) + 4\log |y_{\xi}(x)|
			= -2\log\left( 1+ \frac{\tau^2\varepsilon^2}{|y_{\xi}(x)|^2}\right)
			= -2\tau^2\varepsilon^2  |y_{\xi}(x)|^{-2} +\mathcal{ O}(\varepsilon^4) \] 
			and 
			\[	\nabla \left(  U_{\tau,\xi}-\log(8\tau^2) + 4\log |y_{\xi}(x)|\right) 
			= -2\tau^2\varepsilon^2 \nabla |y_{\xi}(x)|^{-2} +\mathcal{ O}(\varepsilon^4).  \] 
			\begin{eqnarray*}
				&&	\int_{\Sigma} \varepsilon^2 \chi_{\xi} e^{-\varphi_{\xi}} e^{U_{\tau,\xi}} \, dv_g 
				=\int_{B_{2r_0}^{\xi}} \varepsilon^2 \chi(|y|) \frac{8\tau^2}{(\tau^2\varepsilon^2 +|y|^2)^2} \, d y \\
				&=&  \int_{B_{r_0}^{\xi}} \varepsilon^2 \chi(|y|) \frac{8\tau^2}{(\tau^2\varepsilon^2 +|y|^2)^2} \, d y +\mathcal{ O}(\varepsilon^2 )= \varrho(\xi)+{\mathcal{O}}(\varepsilon^2),
			\end{eqnarray*}
			where we applied the fact that $ 	\int _{|y|<r} \frac{\tau^2\varepsilon^2}{     ( \tau^2\varepsilon^2+ |y|^2)^2} \, d y = \pi- \frac{\pi\tau^2\varepsilon^2}{r^2}+ \frac{\pi\tau^4\varepsilon^4}{(r^2+\tau^2\varepsilon^2)r^2}$ for any $r\geq 0.$
			Hence, for any $p>1$
			$ |(-\Delta_g+\beta)\eta_{\tau,\xi}|_{L^p(\Sigma)}= \mathcal{O}(\varepsilon^2+\beta \varepsilon^{\frac 2 p}). $
			By the regularity theory in  Lemma~\ref{lem:inh_LP}, we have  
			$
			\| \eta_{\tau,\xi}-\overline{ \eta_{\tau,\xi}}\|_{W^{2,p}(\Sigma)}\leq C\left( \left\| {\partial_{\nu_g} \eta_{\tau,\xi} }\right\|_{W^{1,p}_{\partial}(\Sigma)} + \left|(-\Delta_g+\beta) \eta_{\tau,\xi} \right|_{L^p(\Sigma)}\right)\leq C(\varepsilon^2+\beta \varepsilon^{\frac 2 p} ),
			$
			for $p>1$. 
			We take $p\in(1,2)$	such that $\alpha_0=\frac 2 p- 1>0 $. Then, the Sobolev inequality  implies that as $\varepsilon\rightarrow 0$,
			$\eta_{\tau,\xi}= \mathcal{O}(\varepsilon^{1+\alpha_0}),$
			uniformly in $C(\Sigma).$
		\end{proof} 
		
		\begin{lemma}~\label{lemb2}
			If $p\geq 1$ then $| \varepsilon^2 \chi_{\xi} e^{U_{\tau,\xi}}|_{L^p(\Sigma)}=\mathcal{O}(\varepsilon^{\frac{2(1-p)}{p}})$ which is   uniform for $\xi$ in $ \Sigma$ and  locally uniform for $\tau$ in $(0, +\infty)$. 
		\end{lemma}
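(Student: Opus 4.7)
My plan is to reduce the $L^p$-norm to an explicit Euclidean integral via the isothermal chart $(U(\xi),y_\xi)$ at $\xi$ and then perform the natural scaling $y=\tau\varepsilon z$. Since the integrand is supported in $U_{r_0}(\xi)\subset U(\xi)$, pulling back to coordinates gives
\[
\int_\Sigma \bigl(\varepsilon^2\chi_\xi e^{U_{\tau,\xi}}\bigr)^p \, dv_g
= \int_{B^\xi_{2r_0}} \varepsilon^{2p}\,\chi^p\!\bigl(|y|/r_0\bigr)\,\frac{(8\tau^2)^p}{(\tau^2\varepsilon^2+|y|^2)^{2p}}\, e^{\hat\varphi_\xi(y)}\,dy,
\]
where $B^\xi_{2r_0}$ is a full disk if $\xi\in\intsigma$ and a half-disk if $\xi\in\partial\Sigma$.

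Second, I would change variables $y=\tau\varepsilon z$, under which $(\tau^2\varepsilon^2+|y|^2)^{2p}=(\tau\varepsilon)^{4p}(1+|z|^2)^{2p}$ and $dy=(\tau\varepsilon)^2\,dz$. A direct bookkeeping gives
\[
\int_\Sigma \bigl(\varepsilon^2\chi_\xi e^{U_{\tau,\xi}}\bigr)^p \, dv_g
= 8^p\,\tau^{2-2p}\,\varepsilon^{2-2p}\!\!\int_{(\tau\varepsilon)^{-1}B^\xi_{2r_0}} \!\!\chi^p\!\bigl(\tau\varepsilon|z|/r_0\bigr)\,\frac{e^{\hat\varphi_\xi(\tau\varepsilon z)}}{(1+|z|^2)^{2p}}\,dz.
\]
For $p\geq 1$ we have $2p\geq 2>1$, so $(1+|z|^2)^{-2p}$ is integrable on $\R^2$ (and hence on $\R^2_+$). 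Since $\hat\varphi_\xi$ is smooth and is uniformly bounded on $B^\xi_{2r_0}$ by the smooth dependence of the isothermal coordinates on the base point $\xi$, the integrand is dominated by an $\varepsilon$-independent integrable function, and the integral is uniformly $O(1)$ as $\varepsilon\to 0$.

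Combining these two steps yields $|\varepsilon^2\chi_\xi e^{U_{\tau,\xi}}|_{L^p(\Sigma)}^p=O(\varepsilon^{2-2p})$, and taking the $p$-th root gives the desired bound $O(\varepsilon^{2(1-p)/p})$. The uniformity in $\xi\in\Sigma$ follows from the uniform bound on $\hat\varphi_\xi$ across the atlas, while local uniformity in $\tau\in(0,\infty)$ comes from the explicit factor $8^p\tau^{2-2p}$ and the fact that $\tau\varepsilon\to 0$ on any compact $\tau$-range. No serious obstacle is expected; the only minor point is to treat the interior and boundary cases simultaneously, which is automatic since the computation does not distinguish between the disk and half-disk beyond the integrability of $(1+|z|^2)^{-2p}$ on either region.
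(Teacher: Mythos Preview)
Your proof is correct and follows essentially the same approach as the paper: pull back to the isothermal chart, rescale $y=\tau\varepsilon z$, and use that $e^{\hat\varphi_\xi}$ is bounded together with integrability of $(1+|z|^2)^{-2p}$ on $\R^2$ (or $\R^2_+$). The only cosmetic difference is that the paper writes $e^{\hat\varphi_\xi(\tau\varepsilon z)}=1+\mathcal{O}(\tau\varepsilon|z|)$ rather than merely using boundedness, and one minor slip in your prose is that the support of $\chi_\xi$ is $U_{2r_0}(\xi)$, not $U_{r_0}(\xi)$, though you write the integral correctly over $B^\xi_{2r_0}$.
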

		
		\begin{proof} By direct calculation, we have 
			\begin{eqnarray*}
				\int_{\Sigma}(	\varepsilon^2 \chi_{\xi}  e^{U_{\tau,\xi}})^p \, dv_g &= &   \int_{B^{\xi}_{2r_0} } e^{\hat{\varphi}_{\xi} (y)
				} \frac{ (8\tau^2 \varepsilon^2)^p}{ (\tau^2\varepsilon^2+|y|^2)^p} \, d y \\
				&=& \int_{B^{\xi}_{2r_0} }
				\frac{ (8\tau^2 \varepsilon^2)^p}{ (\tau^2\varepsilon^2+|y|^2)^p} \, d y +
				\int_{B^{\xi}_{2r_0} }( e^{\hat{\varphi}_{\xi} (y)
				}-1) \frac{ (8\tau^2 \varepsilon^2)^p}{ (\tau^2\varepsilon^2+|y|^2)^p} \, d y\\
				&= &(\tau^2\varepsilon^2)^{1-p} \int_{\frac{1}{\tau\varepsilon} B^{\xi}_{2r_0}  } {(1+ {\mathcal{O}} ( \tau\varepsilon |y|)) } \frac{8}{(1+|y|^2)^2} \, d y  =\mathcal{O}( \varepsilon^{2(1-p)}). 
			\end{eqnarray*}
			Thus 
			$| \varepsilon^2 \chi_{\xi}e^{-\varphi_{\xi}} e^{U_{\tau,\xi}}|_{L^p(\Sigma)}=\mathcal{O}(\varepsilon^{\frac{2(1-p)}{p}})$
			uniformly in $\xi \in \Sigma$ and $\tau$ is bounded away from zero. 
		\end{proof}
		Next, we discuss the asymptotic expansions of $P\Psi^j_i$ as $\varepsilon\rightarrow 0$ analogue to $PU_{\tau,\xi}$.  
		\begin{lemma}~\label{lemb3}
			{\it For any $\alpha_0\in (0,1)$, 
				\[P\Psi^0_{\tau,\xi}(x)=\chi_{\xi} \left(\Psi^0_{\tau,\xi}(x) -\frac{2}{\tau}\right) +\mathcal{O}(\varepsilon^{1+\alpha_0}) = -4\chi_{\xi}(x)\frac{ \tau\varepsilon^2}{ \tau^2\varepsilon^2 +|y_{\xi}(x)|^2}+\mathcal{O} (\varepsilon^{1+\alpha_0})\] 
				in $C(\Sigma)$  as $\varepsilon\rightarrow 0$. 
				And 
				$ P\Psi^0_{\tau,\xi}(x)=\mathcal{O}(\varepsilon^{1+\alpha_0}) , $
				in $C_{loc}(\Sigma \setminus\{\xi\})$  uniformly for $\xi$ in any compact subset of $\intsigma$ or $\xi\in \partial \Sigma$ and $\tau$ is bounded away from zero.  
				For $\xi\in \intsigma$ with $j=1,2$, or for $\xi\in \partial\Sigma$ with $j=1$,
				\[P\Psi^j_{\tau,\xi}(x)
				= \chi_{\xi}(x) \Psi^j_{\tau,\xi}(x)+\varrho(\xi) H^j(x,\xi) +{ \mathcal{O}}
				(\varepsilon^{\alpha_0})
				\] 
				in $C(\Sigma)$ as $\varepsilon\rightarrow 0$, where  $H^j(x,\xi)$ is the unique solution of the following problem
				\begin{equation}~\label{H_j}
					\begin{cases}
						(-\Delta_{g}+\beta) H^j(x,\xi)&=-\beta\frac{4}{\varrho(\xi)}\chi_{\xi} \frac{y_{\xi}(x)_j}{|y_{\xi}(x)|^2} + \frac{4}{\varrho(\xi)} (\Delta_g \chi_{\xi}) \frac{ y_{\xi}(x)_j}{|y_{\xi}(x)|^2}\\
						&+\frac{8}{\varrho(\xi)} \left\la\nabla \chi_{\xi}, \nabla\left( \frac{ y_{\xi}(x)_j}{|y_{\xi}(x)|^2}\right)\right\ra_g,\quad x\in \intsigma\\
						\partial_{ \nu_g} H^j(x,\xi) &=-\frac{4}{\varrho(\xi)} \partial_{ \nu_g} \left( \frac{ y_{\xi}(x)_j }{|y_{\xi}(x)|^2}\right) \chi_{\xi} -\frac{4}{\varrho(\xi)} \frac{ y_{\xi}(x)_j}{|y_{\xi}|^2} \partial_{\nu_g} \chi_{\xi},\quad x\in \partial \Sigma\\
						\int_{\Sigma} H^{j}(x,\xi) \, dv_g&= -\frac{ 4}{\varrho(\xi)} \int_{\Sigma}  \frac{ y_{\xi}(x)_j}{|y_{\xi}(x)|^2} \chi_{\xi}(x) \, dv_g 
					\end{cases}.
				\end{equation} 
				In addition, the convergences above are 
				uniform for $\xi$ in any compact subset of $\intsigma$ or  $\xi\in \partial \Sigma$ and $\tau$ bounded away from zero.}
		\end{lemma}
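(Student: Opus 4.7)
The plan is to mirror the strategy of Lemma~\ref{lemb1}, treating each claim as an inhomogeneous Neumann problem for the remainder and invoking the $L^p$-regularity of Lemma~\ref{lem:inh_LP} combined with the Sobolev embedding $W^{2,p}(\Sigma)\hookrightarrow C(\Sigma)$ with $\alpha_0=2/p-1$, $p\in(1,2)$.

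First, for $j=0$ I would introduce the remainder
\[ \eta^0_{\tau,\xi}:=P\Psi^0_{\tau,\xi}-\chi_\xi(\Psi^0_{\tau,\xi}-2/\tau). \]
Using the algebraic identity $\psi^0_{\tau,0}(y)-2/\tau=-4\tau\varepsilon^2/(|y|^2+\tau^2\varepsilon^2)$ and the Liouville-type relation $-\Delta\psi^0_{\tau,0}=\varepsilon^2 e^{u_{\tau,0}}\psi^0_{\tau,0}$, I would compute $(-\Delta_g+\beta)\eta^0$. The bulk term $\chi_\xi\varepsilon^2 e^{-\varphi_\xi}e^{U_{\tau,\xi}}\Psi^0_{\tau,\xi}$ cancels the corresponding forcing from the projection~\eqref{ppsi}, leaving a contribution supported in the annulus $A_{r_0}(\xi)$ where $\nabla\chi_\xi\neq 0$; there $\Psi^0-2/\tau$ and its derivatives are uniformly of order $\varepsilon^2$. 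The $\beta$-correction $\beta\chi_\xi(\Psi^0-2/\tau)$ has $L^p$-norm $\mathcal{O}(\varepsilon^{2})$ for any $p<2$, and the mean-value correction coming from the projection is also $\mathcal{O}(\varepsilon^2)$. For the Neumann datum I would verify $\partial_{\nu_g}\eta^0\equiv 0$ when $\xi\in\intsigma$, and $\partial_{\nu_g}\eta^0=\mathcal{O}(\varepsilon^2)$ when $\xi\in\partial\Sigma$ by an elementary computation in the coordinate $y_\xi$, imitating the boundary analysis of Lemma~\ref{lemb1}. A direct integration shows the mean is $\mathcal{O}(\varepsilon^2|\log\varepsilon|)$. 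Lemma~\ref{lem:inh_LP} together with Sobolev embedding then gives $\|\eta^0\|_{C(\Sigma)}=\mathcal{O}(\varepsilon^{1+\alpha_0})$. Outside any fixed neighborhood of $\xi$ the cutoff term vanishes, so the same bound yields the claim in $C_{loc}(\Sigma\setminus\{\xi\})$.

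Second, for $j\geq 1$ I would set
\[ \eta^j_{\tau,\xi}:=P\Psi^j_{\tau,\xi}-\chi_\xi\Psi^j_{\tau,\xi}-\varrho(\xi)H^j(\cdot,\xi), \]
with $H^j$ defined by~\eqref{H_j}. The crucial point is the pointwise identity
\[ \Psi^j_{\tau,\xi}(x)-\frac{4\,y_\xi(x)_j}{|y_\xi(x)|^2}=\frac{-4\tau^2\varepsilon^2\,y_\xi(x)_j}{|y_\xi(x)|^2\bigl(|y_\xi(x)|^2+\tau^2\varepsilon^2\bigr)}, \]
which pairs $\chi_\xi\Psi^j$ with precisely the singular datum driving $H^j$ in~\eqref{H_j}. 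Expanding $(-\Delta_g+\beta)\eta^j$ via $-\Delta\psi^j_{\tau,0}=\varepsilon^2 e^{u_{\tau,0}}\psi^j_{\tau,0}$, the bulk Liouville term cancels the projection forcing, and the cutoff contributions involving $\Delta_g\chi_\xi$ and $\nabla\chi_\xi$ cancel the corresponding pieces from~\eqref{H_j} up to replacing $\Psi^j$ by $4y_{\xi,j}/|y_\xi|^2$, which is an $\mathcal{O}(\varepsilon^2/|y_\xi|^3)$ change on the annulus $A_{r_0}(\xi)$. The unavoidable residue is the $\beta$-correction $\beta\chi_\xi(\Psi^j-4y_{\xi,j}/|y_\xi|^2)$ supported in $B^\xi_{r_0}$; splitting the $L^p$-integral into $\{|y_\xi|<\tau\varepsilon\}$ and $\{\tau\varepsilon<|y_\xi|<r_0\}$ produces $\mathcal{O}(\varepsilon^{(2-p)/p})=\mathcal{O}(\varepsilon^{\alpha_0})$ on both pieces. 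The Neumann datum and the mean value are cancelled by construction of $H^j$ in~\eqref{H_j}. Applying Lemma~\ref{lem:inh_LP} and Sobolev embedding yields $\|\eta^j\|_{C(\Sigma)}=\mathcal{O}(\varepsilon^{\alpha_0})$, the claimed rate.

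The main obstacle will be the careful boundary bookkeeping when $\xi\in\partial\Sigma$ in the $j\neq 0$ case, since $y_{\xi,j}/|y_\xi|^2$ has a more singular boundary trace than $\log(1/|y_\xi|)$ encountered in Lemma~\ref{lemb1}. One must check that the prescribed Neumann datum for $H^j$ in~\eqref{H_j} is in $W^{1,p}_\partial(\Sigma)$ with controlled norm, and that its subtraction from $\partial_{\nu_g}(\chi_\xi\Psi^j)$ produces a residue small in $W^{1,p}_\partial$. Uniformity in $\xi$ over compact subsets of $\intsigma$ or on $\partial\Sigma$, and in $\tau$ bounded away from zero, is then inherited from uniform $C^{1,\alpha}$-bounds on $H^j(\cdot,\xi)$ deduced by the same argument as Lemma~\ref{lem:re_green}.
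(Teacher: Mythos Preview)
Your approach is correct and essentially identical to the paper's: define the same remainders $\eta^0=P\Psi^0-\chi_\xi(\Psi^0-2/\tau)$ and $\eta^j=P\Psi^j-\chi_\xi\Psi^j-\varrho(\xi)H^j$, estimate their forcing, Neumann datum, and mean, then apply Lemma~\ref{lem:inh_LP} with Sobolev embedding. One small slip: the $\beta$-correction $\beta\chi_\xi(\Psi^0-2/\tau)$ has $L^p$-norm $\mathcal{O}(\varepsilon^{2/p})$, not $\mathcal{O}(\varepsilon^2)$, but since $2/p=1+\alpha_0$ this is exactly what you need; similarly, the Neumann datum and mean for $\eta^j$ are not cancelled exactly by $H^j$ but only up to $\mathcal{O}(\varepsilon^2)$ and $\mathcal{O}(\varepsilon^2|\log\varepsilon|)$ residues, which you correctly flag in your final paragraph.
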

		\begin{proof}
			Let $\eta_{\tau,\xi} = P\Psi^0_{\tau,\xi} - \chi_{\xi}\left( \Psi^0_{\tau,\xi}-\frac 2 {\tau}\right). $
			For $x\in\partial \Sigma$, 
			\begin{eqnarray*}
				\partial_{\nu_g} \left(\chi_{\xi} \left(\Psi^0_{\tau,\xi} (x) -\frac{ 2}{\tau}\right)\right)&=& -\partial_{ \nu_g} \chi_{\xi} \frac{ 4\tau\varepsilon^2}{ |y_{\xi}(x)|^2 +\tau^2\varepsilon^2}+  \chi_{\xi} \frac{ 8\tau\varepsilon^2 |y_{\xi}(x)|^2}{( |y_{\xi}(x)|^2+\tau^2\varepsilon^2)^2 } \partial_{\nu_g} \log |y_{\xi}(x)|.
			\end{eqnarray*}
			If $\xi \in \intsigma$, $\partial_{ \nu_g} \eta_{\tau,\xi}\equiv 0$ in $\partial \Sigma$; if $\xi\in \partial \Sigma$, 
			$\partial_{\nu_g} \eta_{\tau,\xi}=\mathcal{O}(\varepsilon^2)$ on  $\partial\Sigma$. By direct calculation, we have 
			\begin{equation*}
				\begin{array}{ll}
					\int_{\Sigma} \chi_{\xi} \left( \Psi^0_{\tau,\xi} -\frac{2}{\tau}\right) \, dv_g &=2
					\int_{\Sigma}\chi_{\xi} \frac{ \tau\varepsilon^2}{ |y_{\xi}(x)|^2 +\tau^2\varepsilon^2 } \, dv_g(x)= 2\tau \varepsilon^2 \int_{ B_{2r_0}^{\xi}} \frac{1}{|y|^2+\tau^2\varepsilon^2} e^{\hat{\varphi} _{\xi}(y)} \, d y\\
					&=2\tau \varepsilon^2 \int_{ B_{2r_0}^{\xi}} \frac{1}{|y|^2+\tau^2\varepsilon^2}  \, d y + 2\tau \varepsilon^2 \int_{ B_{2r_0}^{\xi}} \frac{1}{|y|^2+\tau^2\varepsilon^2} (e^{\hat{\varphi} _{\xi}(y)}-1) \, d y\\
					&= \mathcal{O}(\varepsilon^2|\log \varepsilon|)
				\end{array}
			\end{equation*}
			and 
			\begin{equation*}
				\begin{array}{lcl}
					&&	(-\Delta_g+\beta) \eta_{\tau,\xi}(x)= (-\Delta_g+\beta)\left(  P\Psi^0_{\tau,\xi} - \chi_{\xi}\left( \Psi^0_{\tau,\xi}-\frac 2 {\tau}\right)   \right)\\
					&=&
					(\Delta_{g} \chi_{\xi}) \left(\Psi^0_{\tau,\xi}-\frac 2 {\tau}\right) +2 \lan \nabla \chi_{\xi}, \nabla \Psi^0_{\tau,\xi} \ran_g - \overline{ \varepsilon^2 \chi_{\xi} e^{-\varphi_{\xi}} e^{U_{\tau, \xi}} \Psi^0_{\tau,\xi} }+\beta\chi_{\xi}\frac{4\tau\varepsilon^2}{|y_{\xi}|^2+\tau^2\varepsilon^2}\\
					&=&\beta\chi_{\xi} \frac{4\tau\varepsilon^2}{|y_{\xi}|^2+\tau^2\varepsilon^2}+\mathcal{O}(\varepsilon^2),
				\end{array}
			\end{equation*}
			where we applied the fact for any fixed $r>0$, $ \int_{|y|<r} \frac{ \tau^2\varepsilon^2 -|y|^2}{(\tau^2\varepsilon^2+|y|^2)^3}= \mathcal{O}(\varepsilon^2)$ as $\varepsilon\rightarrow 0.$
			Via  the regularity theory in  Lemma~\ref{lem:inh_LP} and Sobolev inequality, there exists a constant $C>0$ such that 
			$
			\| \eta_{\tau,\xi}-\overline{ \eta_{\tau,\xi}}\|_{C(\Sigma)}
			\leq  C(\varepsilon^2+\beta \varepsilon^{\frac 2 p} |\log \varepsilon|^{\frac 1 p}).
			$
			We choose $p\in (1,2)$ such that $\alpha_0<\frac 2 p-1 $, then 
			$ \eta_{\tau,\xi}= \mathcal{O}(\varepsilon^{1+\alpha_0}), $
			uniformly in $C(\Sigma)$.

			If $\xi\in \intsigma$, $\partial_{ \nu_g} H^{j}(x,\xi)=0$ for any $x\in \partial\Sigma$. If $\xi\in \partial \Sigma$, for any $x\in\partial\Sigma$
			by direct calculation,  
			$$
			\chi_{\xi}(x) \partial_{ \nu_g}\left( \frac{ y_{\xi}(x)_1}{|y_{\xi}(x)|^2} \right)=0.$$
			Denote $\partial_{ \nu_g} H^j(\xi,\xi):= 0 $, then $\partial_{ \nu_g} H^j(\cdot,\xi)\in C^{\infty}(\partial \Sigma)$.  By Lemma~\ref{lem:schauder}, there is a unique solution to the problem~\eqref{H_j} in  $C^{1,\alpha}(\partial \Sigma)$ for any $\alpha\in (0,1).$ 
			Let $$\zeta_{\tau,\xi}(x)= P\Psi^j_{\tau,\xi}(x)-\chi_{\xi}(x)\Psi^j_{\tau,\xi}(x) - \varrho(\xi) H^j(x,\xi).$$
			Since $\int_{B} \frac{ \varepsilon^3y_j}{ (\varepsilon^2+|y|^2)^3} \, d y=	0 \text{ for  } j=1, 2 \text{ and }B= B_r \text{ or } j=1\text{ and } B= B_r\cap\{y_2\geq 0\},$ we have the following estimates:
			\begin{eqnarray*}
				\overline{ \varepsilon^2 \chi_{\xi} e^{-\varphi_{\xi}} e^{U_{\tau,\xi} } \Psi^j_{\tau,\xi}}&=&\int_{B^{\xi}_{2r_0}}  \frac{ 8\tau^2\varepsilon^2 \chi(|y|) y_j }{(\tau^2\varepsilon^2+|y|^2)^3} \, d y\\
				&=&  \int_{B}  \frac{ 8\tau^2\varepsilon^2 y_j }{(\tau^2\varepsilon^2+|y|^2)^3} \, d y +\mathcal{ O}(\varepsilon^{2})= \mathcal{O}(\varepsilon^2),
			\end{eqnarray*}
			\begin{eqnarray*}
				(-\Delta_g+\beta) \zeta_{\tau,\xi}
				&=& -\frac{ 4\tau^2\varepsilon^2 (y_{\xi})_j}{(\tau^2\varepsilon^2+ |y_{\xi}|^2) |y_{\xi}|^2}  \Delta_{g}\chi_{\xi}- 8\tau^2\varepsilon^2 \left\lan \nabla \chi_{\xi}, \nabla\left( \frac{ (y_{\xi})_j}{  (\tau^2\varepsilon^2+ |y_{\xi}|^2) |y_{\xi}|^2   }\right)\right\ran_g \\
				&&-\overline{ \varepsilon^2 \chi_{\xi} e^{-\varphi_{\xi}} e^{U_{\tau,\xi} } \Psi^j_{\tau,\xi}}+4\beta\chi_{\xi} \frac{\tau^2\varepsilon^2 (y_{\xi})_j}{(\tau^2\varepsilon^2+|y_{\xi}|^2)|y_{\xi}|^2}\\
				&=&  4\beta\chi_{\xi} \frac{\tau^2\varepsilon^2 (y_{\xi})_j}{(\tau^2\varepsilon^2+|y_{\xi}|^2)|y_{\xi}|^2} +\mathcal{O}(\varepsilon^2)
			\end{eqnarray*}
			and 
			\begin{eqnarray*}
				\int_{\Sigma} \zeta_{\tau,\xi} \, dv_g & =& 4 \int_{\Sigma} \chi_{\xi}(x)\frac{ \tau^2\varepsilon^2 y_{\xi}(x)_j}{|y_{\xi}(x)|^2( \tau^2\varepsilon^2 + |y_{\xi}(x)|^2) } \, dv_g(x) =4 \int_{B_{2r_0}^{\xi} } \chi\left( \frac {|y|} { r_0}\right) e^{\hat{\varphi}_{\xi}(y)} \frac{ \tau^2\varepsilon^2 y_j}{|y|^2 (\tau^2\varepsilon^2 +|y|^2 ) } .
			\end{eqnarray*}
			For any  $\xi\in \intsigma$ and $j=1,\cdots, \ii(\xi)$
			\begin{eqnarray*}
				&&	\left| \int_{B_{2r_0} } \chi(|y|) \frac{ \tau^2\varepsilon^2 (e^{\hat{\varphi}_{\xi}(y)}-1) y_j}{|y|^2 (\tau^2\varepsilon^2 +|y|^2 ) } \, d y \right| \leq C
				\int_{|y|<2r_0}   \frac{ \tau^2\varepsilon^2 }{ (\tau^2\varepsilon^2 +|y|^2 ) } \, d y
				= \mathcal{ O}(\varepsilon^2|\log\varepsilon|). 
			\end{eqnarray*}
		Then, we have 
			\begin{eqnarray*}
				&&\int_{\Sigma} \zeta_{\tau,\xi} \, dv_g=4 \int_{B_{2r_0}^{\xi} } \chi\left(\frac{|y|}{r_0}\right) e^{\hat{\varphi}_{\xi}(y)} \frac{ \tau^2\varepsilon^2 y_j}{|y|^2 (\tau^2\varepsilon^2 +|y|^2 ) } \, d y\\
				&=& - 4 \int_{B_{2r_0}^{\xi} } \chi\left(\frac{|y|}{r_0}\right) \frac{ \tau^2\varepsilon^2 y_j}{|y|^2 (\tau^2\varepsilon^2 +|y|^2 ) } \, d y +\mathcal{O}(\varepsilon^2|\log\varepsilon|)=\mathcal{O}(\varepsilon^2|\log\varepsilon|),
			\end{eqnarray*}
			where we applied the symmetric property of the integral 
			$\int_{B_{2r_0}} \chi\left(\frac{|y|}{r_0}\right) \frac{ \tau^2\varepsilon^2 y_j}{|y|^2 (\tau^2\varepsilon^2 +|y|^2 ) } \, d y=0 $ for $j=1,\cdots,\ii(\xi).$
		
			If $\xi \in \intsigma$, $\partial_{ \nu_g} \zeta_{\tau,\xi}(x)\equiv 0$ for any $x\in \partial \Sigma$. 
			If $x\in \partial \Sigma$, 
			by calculation, we deduce that 
			\begin{equation*}
				\begin{array}{lcl}
					\partial_{\nu_g} \zeta_{\tau,\xi} (x)&=&
					-\partial_{ \nu_g}\Big(\chi_{\xi} ( \Psi^j_{\tau,\xi} +\varrho(\xi)H^j(x,\xi))  -\chi_{\xi} \partial_{ \nu_g}\left(  \Psi^j_{\tau,\xi} +\varrho(\xi)H^j(x,\xi) \right)\Big) \\
					&=& (\partial_{ \nu_g} \chi_{\xi})\frac{ 4\tau^2\varepsilon^2 y_{\xi}(x)_j}{ 
						(\tau^2\varepsilon^2+|y_{\xi}(x)|^2) |y_{\xi}(x)|^2 } + \chi_{\xi} \partial_{ \nu_g}  \frac{ 4\tau^2\varepsilon^2 y_{\xi}(x)_j}{ 
						(\tau^2\varepsilon^2+|y_{\xi}(x)|^2) |y_{\xi}(x)|^2 }= \mathcal{O}(\varepsilon^2).
				\end{array}
			\end{equation*}
			Applying  the regularity theory in  Lemma~\ref{lem:inh_LP} and the Sobolev inequality, for any $p\in(1,2)$, we deduce that 
			$
			\| \zeta_{\tau,\xi}-\overline{ \zeta_{\tau,\xi}}\|_{C(\Sigma)}\leq C(\varepsilon^2+\beta \varepsilon^{\frac 1 p} ).
			$
			We take $p\in (0,1)$ such that $\alpha_0=\frac 1p$.
			Then as $\varepsilon\rightarrow 0$, we have 
			$\eta_{\tau,\xi}= { \mathcal{O}}(\varepsilon^{\alpha_0}) $
			uniformly in $C(\Sigma).$
		\end{proof}
		{
			\begin{remark}~\label{rklemb3}
				$\partial_{\tau} PU_{\tau,\xi}= P\Psi^{0}_{\tau,\xi}$ by the uniqueness of the solution to the problem~\eqref{ppsi}. However, $\partial_{\xi_j}PU_{\tau,\xi}\neq P\Psi^j_{\tau,\xi}.$
				Analogous to the proof of Lemma~\ref{lemb3}, we obtain the following expansion for any $\alpha_0\in (0,1)$, 
				\begin{equation}\label{eq:xi_j_oringinal}
					\partial_{\xi_j}PU_{\tau,\xi}=\chi_{\xi}  \partial_{\xi_j}(\chi_j U_{\tau,\xi}) +\varrho(\xi) \partial_{\xi_j} H^g_{\xi} +\mathcal{O}(\varepsilon^{\alpha_0}),
				\end{equation}
				as $\varepsilon\rightarrow 0$
				in $C(\Sigma)$,	which is uniformly convergent for $\xi$ in any compact subset of $\intsigma$ or $\xi\in \partial\Sigma$ and $\tau$ in any compact subset of  $(0,\infty)$.  \\
				{\it Indeed, }  we notice that for any $y\in U_{2r_0}(\xi)$ as $y\rightarrow 0$
				\[ \left. \partial_{\xi_j}|y_{\xi}(x)|^2\right|_{x=y^{-1}_{\xi}(y)}= -2y_j+ \mathcal{ O}(|y|^3).\] 
				Let $ \zeta_{\tau,\xi}^*=	\partial_{\xi_j}PU_{\tau,\xi}-\partial_{\xi_j}(\chi_{\xi}  U_{\tau,\xi})-\varrho(\xi) \partial_{\xi_j} H^g(x,\xi)$. 
				It is easy to obtain 
				\[ (-\Delta_g+\beta)\zeta_{\tau,\xi}^*= -\beta\partial_{\xi_j}\left(\chi_{\xi}U_{\tau,\xi}+4\chi_{\xi}\log |y_{\xi}| \right)+\mathcal{O}(\varepsilon^2|\log\varepsilon|), \quad\text{ in }\intsigma\]
				\[ \int_{\Sigma}\zeta_{\tau,\xi}^* \, dv_g= \mathcal{O}(\varepsilon^2|\log\varepsilon|),\]
				and $\partial_{\xi}\zeta_{\tau,\xi}^*= \mathcal{O}(\varepsilon^2)\text{ on } \partial\Sigma.$
				Applying the regularity theory in  Lemma~\ref{lem:inh_LP} and Sobolev inequality, we have 
				$ \zeta_{\tau,\xi}^*=\mathcal{O}(\varepsilon^2|\log\varepsilon|+ \beta \varepsilon^{\frac 1p}), $
				convergent in $C(\Sigma)$ for any $p\in(1,2)$. 
				We take $p\in(1,2)$ such that $\alpha_0=\frac 1 p$, then we deduce~\eqref{eq:xi_j_oringinal}.
			\end{remark}
		}
		The following lemma shows asymptotic ``orthogonality" properties of $P\Psi^j_i$. 
		\begin{lemma}~\label{lem4} For any $\alpha_0\in(0,1)$, we have as $\varepsilon\rightarrow 0$ for $j,i= 0,\cdots, \ii(\xi)$,		
			\[ \langle P\Psi^i_{\tau,\xi}, P\Psi^j_{\tau,\xi}\rangle=\begin{cases}
				\frac{ 8\varrho(\xi)D_i}{\pi\tau^2}\delta_{ij} + {\mathcal{O}}(\varepsilon^{\alpha_0}) & \text{ when }  i \text{ or } j=0\\
				\frac{ 8\varrho(\xi)D_i}{\pi\tau^2\varepsilon^2}\delta_{ij} + {\mathcal{O}}(\varepsilon^{\alpha_0-1})& \text{ otherwise } 
			\end{cases}
			,
			\]
			and 
			\[ \langle P\Psi^i_{\tau^0,\xi_0},  P\Psi^j_{\tau^1,\xi_1}\rangle=\begin{cases}
				\mathcal{O}(\varepsilon^{\alpha_0}) &\text{ when } i \text{ or } j=0\\
				{\mathcal{O}}(\varepsilon^{\alpha_0-1})& \text{ otherwise }
			\end{cases}, \]
			where three different points  $\xi,\xi_0,\xi_{1}\in\Sigma$ and uniformly in  $\tau,\tau^0,\tau^1$ are bounded away from zero and the $\delta_{ij}$ is the Kronecker symbol, and  $D_0=\int_{\mathbb{R}^2}  \frac{1-|y|^2}{ (1+|y|^2)^4 }\, d y$, $D_1= D_2=\int_{\mathbb{R}^2} \frac{ |y|^2 }{ (1+|y|^2)^4}\, d y . $
		\end{lemma}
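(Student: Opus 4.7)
The inner product on $\oH$ is the duality pairing with $-\Delta_g+\beta$; by integration by parts, the Neumann boundary condition $\partial_{\nu_g}P\Psi^i=0$, and the vanishing mean $\int_{\Sigma} P\Psi^j\,dv_g=0$, the defining equations~\eqref{ppsi} give the master identity
\begin{equation*}
\langle P\Psi^i_{\tau^a,\xi_a},\,P\Psi^j_{\tau^b,\xi_b}\rangle \;=\; \int_{\Sigma} P\Psi^j_{\tau^b,\xi_b}\,\varepsilon^2\chi_{\xi_a}e^{-\varphi_{\xi_a}}e^{U_{\tau^a,\xi_a}}\Psi^i_{\tau^a,\xi_a}\,dv_g,
\end{equation*}
from which every estimate in the lemma will be extracted.

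For the diagonal case $\xi_a=\xi_b=\xi$, I would substitute the expansions from Lemma~\ref{lemb3}, namely $P\Psi^0_{\tau,\xi}=\chi_\xi(\Psi^0_{\tau,\xi}-2/\tau)+\mathcal{O}(\varepsilon^{1+\alpha_0})$ and, for $j\geq 1$, $P\Psi^j_{\tau,\xi}=\chi_\xi\Psi^j_{\tau,\xi}+\varrho(\xi)H^j(\cdot,\xi)+\mathcal{O}(\varepsilon^{\alpha_0})$. Passing to the isothermal chart $(U(\xi),y_\xi)$, the factor $e^{-\varphi_\xi}$ cancels the one in $dv_g=e^{\hat\varphi_\xi}\,dy$; after rescaling $y=\tau\varepsilon z$ the main integral reduces to an explicit rational integral on $\R^2$ (if $\xi\in\intsigma$) or on $\R^2_+$ (if $\xi\in\partial\Sigma$). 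When $j\geq 1$ the rescaling produces an overall factor $(\tau\varepsilon)^{-2}$, whereas for $j=0$ it does not, which accounts for the two different $\varepsilon$-orders in the statement. The Kronecker delta comes from the symmetry $\int z_iz_j/(1+|z|^2)^4\,dz=0$ when $i\neq j$, the diagonal constants are exactly the $D_i$, and the prefactor $\varrho(\xi)/\pi$ reflects full-plane versus half-plane integration. The expansion remainders contribute errors bounded by combining $|\Psi^0|\leq C$, $|\Psi^j|\leq C/\varepsilon$ for $j\geq 1$ with the $L^p$-estimate in Lemma~\ref{lemb2}, producing exactly $\mathcal{O}(\varepsilon^{\alpha_0})$ or $\mathcal{O}(\varepsilon^{\alpha_0-1})$.

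For the cross-center case $\xi_a\neq\xi_b$, the support of $\chi_{\xi_a}$ lies away from $\xi_b$, so the singular piece $\chi_{\xi_b}\Psi^j_{\tau^b,\xi_b}$ of the projection vanishes there, and Lemma~\ref{lemb3} reduces $P\Psi^j_{\tau^b,\xi_b}$ on this set to the smooth term $\varrho(\xi_b)H^j(\cdot,\xi_b)$ plus $\mathcal{O}(\varepsilon^{\alpha_0})$ (or $\mathcal{O}(\varepsilon^{1+\alpha_0})$ if $j=0$). I would Taylor-expand $H^j(\cdot,\xi_b)$ at $\xi_a$, rescale $y=\tau^a\varepsilon z$ in the chart at $\xi_a$, and observe that the constant term kills the leading integral by symmetry ($\int z_i/(1+|z|^2)^3\,dz=0$ for $i\geq 1$ and the analogous identity for $i=0$), leaving only a Lipschitz remainder of size $\mathcal{O}(1)\subset\mathcal{O}(\varepsilon^{\alpha_0-1})$ since $\alpha_0<1$. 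The $\mathcal{O}(\varepsilon^{\alpha_0})$ remainder from Lemma~\ref{lemb3} multiplied by the $L^1$-mass of $\varepsilon^2 e^{U_{\tau^a,\xi_a}}|\Psi^i_{\tau^a,\xi_a}|$ yields the remaining portion of the stated bound.

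The main obstacle will be disciplined bookkeeping rather than any genuinely new difficulty: one has to track four independent error sources (the Lemma~\ref{lemb3} expansion of $P\Psi^j$, the Taylor remainder of $H^j$ and of the conformal factor $e^{\hat\varphi_\xi}$, the truncation of the rescaled domain at $|z|\sim r_0/(\tau\varepsilon)$, and the interior-versus-boundary distinction) across the sub-cases $\{i=j=0\}$, $\{i=0,j\geq 1\}$, $\{i=j\geq 1\}$, and $\{i\neq j,\text{ both }\geq 1\}$, and verify that in each the errors collapse to the claimed $\varepsilon$-order. The integrals themselves are elementary, and the bounds are uniform in $\tau$ bounded away from zero and in $\xi$ in any compact subset of $\intsigma$ or in $\partial\Sigma$, as required.
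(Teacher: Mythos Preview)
Your proposal is correct and follows essentially the same approach as the paper: the same master identity from~\eqref{ppsi}, the same substitution of the expansions from Lemma~\ref{lemb3}, the same rescaling $y=\tau\varepsilon z$ in the isothermal chart (with the interior/boundary distinction producing the factor $\varrho(\xi)/\pi$), and the same symmetry arguments for the off-diagonal vanishing and for the cross-center case via disjoint supports. The paper's proof is just the explicit execution of exactly the bookkeeping you outline.
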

		\begin{proof}
			We estimate the inner product  by computing the integral separately in following two areas:
			\begin{eqnarray*}
				\langle P\Psi^i_{\tau,\xi}, P\Psi^j_{\tau,\xi}\rangle&=& \int_{\Sigma}\varepsilon^2 \chi_{\xi}(x) e^{-\varphi_{\xi}} e^{U_{\tau,\xi}}\Psi^i_{\tau,\xi} P\Psi^j_{\tau,\xi} \, dv_g(x)\\
				&=& \int_{\Sigma\cap U_{2r_0}(\xi)} +\int_{\Sigma\setminus U_{2r_0}(\xi)}  \varepsilon^2 \chi_{\xi}(x) e^{-\varphi_{\xi}} e^{U_{\tau,\xi}}\Psi^i_{\tau,\xi} P\Psi^j_{\tau,\xi}  \, dv_g(x). 
			\end{eqnarray*}
			For $i=j=0$, by Lemma~\ref{lemb3}, we have  
			\begin{eqnarray*}
				&&\int_{\Sigma\cap U_{2r_0}(\xi) } \varepsilon^2 \chi_{\xi}(x) e^{-\varphi_{\xi}} e^{U_{\tau,\xi}}\Psi^0_{\tau,\xi} P\Psi^0_{\tau,\xi}  \, dv_g(x)\\
				&= &16\tau\varepsilon^2 
				\int_{B^{\xi}_{2r_0}} \chi\left(\frac{|y|}{r_0}\right)  \frac{ |y|^2-\tau^2\varepsilon^2}{(\tau^2\varepsilon^2+|y|^2)^3} \left( - \frac{ 4\tau\varepsilon^2 \chi\left(\frac{|y|}{r_0}\right)}{ \tau^2\varepsilon^2 +|y|^2} +\mathcal{O}(\varepsilon^{1+\alpha_0})\right) \, d y \\
				&= & \frac{64}{\tau^2} \int_{\frac 1 {\tau\varepsilon} B_{r_0}^{\xi}}  \frac{ 1-|y|^2}{(1+|y|^2)^4}
				+ \mathcal{O}(\varepsilon^{1+\alpha_0}).
			\end{eqnarray*} 
			Considering that 	$\frac{64}{\tau^2} \int_{\frac 1 {\tau\varepsilon} B_{r_0}^{\xi}}  \frac{ 1-|y|^2}{(1+|y|^2)^4}= \frac{8\varrho(\xi)}{\tau^2\pi} \int_{\mathbb{R}^2}  \frac{ 1-|y|^2}{(1+|y|^2)^4} \, d y+ \mathcal{ O}(\varepsilon^2)$, 
			\[ 	\la P\Psi^0_{\tau,\xi}, P\Psi^0_{\tau,\xi}\ra=\int_{\Sigma } \varepsilon^2 \chi_{\xi}(x) e^{-\varphi_{\xi}} e^{U_{\tau,\xi}}\Psi^0_{\tau,\xi} P\Psi^0_{\tau,\xi}  \, dv_g(x) =\frac{8\varrho(\xi)D_0}{\pi \tau^2} +{\mathcal{O}}(\varepsilon^{1+\alpha_0}), \]
			where $D_0=\int_{\mathbb{R}^2}  \frac{1-|y|^2}{ (1+|y|^2)^4 }\, d y.$ \par 
			Similarly, for $j=0$ and $i=1,\cdots,\ii(\xi)$  we have 
			\begin{eqnarray*}
				&&	\la P\Psi^i_{\tau,\xi}, P\Psi^0_{\tau,\xi}\ra=	\varepsilon^2\int_{\Sigma\cap U_{2r_0}(\xi)} \chi_{\xi}  e^{-\varphi_{\xi}} e^{U_{\tau,\xi}} \Psi^i_{\tau,\xi} P\Psi^0_{\tau,\xi} \, dv_g\\
				&=& 32\tau^2 \varepsilon^2\int_{ B_{2r_0}^{\xi}} \chi\left(\frac{|y|}{r_0}\right) \frac{y_i}{( \tau^2\varepsilon^2 +|y|^2)^3} \left(-\frac{ 4\tau\varepsilon^2\chi\left(\frac{|y|}{r_0}\right)  }{ \tau^2\varepsilon^2 +|y|^2}  +\mathcal{O}(\varepsilon^{1+\alpha_0}) \right) \, d y=\mathcal{O}(\varepsilon^{\alpha_0}). 
			\end{eqnarray*}
			Applying Lemma~\ref{lemb3}, for $\xi\in \intsigma$ we have 
			\begin{eqnarray*}
				&&	\varepsilon^2\int_{\Sigma\cap U_{2r_0}(\xi)} \chi_{\xi} e^{-\varphi_{\xi}} e^{U_{\tau,\xi}} \Psi^i_{\tau,\xi} P\Psi^j_{\tau,\xi} \, dv_g\\
				&
				=& 32\tau^2 \varepsilon^2\int_{ B_{2r_0}^{\xi}} \chi\left(\frac{|y|}{r_0}\right) \frac{y_i}{( \tau^2\varepsilon^2 +|y|^2)^3} \left(\chi\left(\frac{|y|}{r_0}\right) 
				\frac{ 4y_j}{ \tau^2\varepsilon^2 +|y|^2}  \right.
				\\
				&&\left.+ \varrho(\xi) H^{j}( y_{\xi}^{-1}(y),\xi)+{\mathcal{O}}(\varepsilon^{\alpha_0}) \right) \, d y\\
				&=&  \frac{128}{\tau^2\varepsilon^2}\int_{ \frac 1 {\tau\varepsilon}B_{r_0} ^{\xi}}  \frac{y_iy_j }{(1+|y|^2)^4} \, d y +   \int_{ B_{r_0}^{\xi}}  \frac{32\tau^2 \varepsilon^2\varrho(\xi)y_i}{( \tau^2\varepsilon^2 +|y|^2)^3}(H^{j}( y_{\xi}^{-1}(y),\xi)  - H^{j}(\xi,\xi)) \, d y  \\
				&&+  32\tau^2 \varepsilon^2  \varrho(\xi)  H^j(\xi,\xi) \int_{ B_{r_0}^{\xi}}  \frac{y_i}{( \tau^2\varepsilon^2 +|y|^2)^3} \, d y  + { \mathcal{O}}(\varepsilon^{\alpha_0-1})\\
				&=& \frac{128}{\tau^2\varepsilon^2}  \int_{ \frac 1 {\tau\varepsilon} B_{r_0}^{\xi}}  \frac{y_iy_j }{(1+|y|^2)^4} \, d y  +\mathcal{ O}\left(  \int_{ B_{r_0}^{\xi}}  \frac{32\tau^2 \varepsilon^2|y|^2}{( \tau^2\varepsilon^2 +|y|^2)^3}  \, d y\right) +{ \mathcal{O}}(\varepsilon^{\alpha_0-1})\\
				&=& \frac{ 8\varrho(\xi)D_i}{\pi\tau^2\varepsilon^2} \delta_{ij}+
				{\mathcal{O}}(\varepsilon^{\alpha_0-1}) ( \varepsilon\rightarrow 0),
			\end{eqnarray*}
			where  $D_i= \int_{\mathbb{R}^2} \frac{ |y|^2 }{ (1+|y|^2)^4}  \, d y . $
			For $\xi\in \partial \Sigma$, applying Lemma~\ref{lemb3} again, 
			\begin{eqnarray*}
				&&\varepsilon^2\int_{\Sigma\cap U_{2r_0}(\xi)}  e^{U_{\tau,\xi}} \Psi^1_{\tau,\xi} P\Psi^1_{\tau,\xi} \, dv_g(x)\\
				&=& \int_{ B_{2r_0}^\xi}\chi\left(\frac{|y|}{r_0}\right)  \frac{ 32\tau^2 \varepsilon^2y_1}{( \tau^2\varepsilon^2 +|y|^2)^3}
				\left(  \frac{ 4\chi\left(\frac{|y|}{r_0}\right) y_1}{ \tau^2\varepsilon^2+ |y|^2}  + \varrho(\xi) H^1( y_{\xi}^{-1}(y),\xi)  + {\mathcal{O}}(\varepsilon^{\alpha_0})  \right)\\
				&=&\frac{128}{\tau^2\varepsilon^2}  \int_{\frac 1 {\tau\varepsilon} B^{\xi}_{r_0} }  \frac{ y_1^2 }{(1+|y|^2)^4}  +{\mathcal{O}}(\varepsilon^{\alpha_0-1}).
			\end{eqnarray*}
			We observe that as $\varepsilon\rightarrow 0$
			\begin{equation*}
				\begin{array}{lcl}
					\left| \frac{128}{\tau^2\varepsilon^2}  \int_{\frac 1 {\tau\varepsilon} B^{\xi}_{r_0} }  \frac{ y_1^2 }{(1+|y|^2)^4} - \frac{128}{\tau^2\varepsilon^2}  \int_{\mathbb{R}^2_+}  \frac{ y_1^2 }{(1+|y|^2)^4}\right| &\leq & \frac{128}{\tau^2\varepsilon^2}  \int_{ \mathbb{R}^2_+\setminus \frac 1 {\tau\varepsilon} B^{\xi}_{r_0}}  \frac{ 1 }{(1+|y|^2)^3} \, d y\leq  \mathcal{O}(\varepsilon^2),
				\end{array}
			\end{equation*}
			and 
			\[  \varepsilon^2\int_{\Sigma\setminus U_{2r_0}(\xi) }\chi_{\xi}(x)e^{-\varphi_{\xi}(x)} e^{U_{\tau,\xi}} \Psi^i_{\tau,\xi} P\Psi^j_{\tau,\xi} \, dv_g={\mathcal{O}}(\varepsilon^2\|P\Psi^j_{\tau,\xi}\|)=\mathcal{O}(\varepsilon),  \]
			for $i,j=1,\cdots,\ii(\xi)$. 
			Thus, we have 
			$ \la P\Psi^i_{\tau,\xi},P\Psi^j_{\tau,\xi} \ra=	\frac{ 8\varrho(\xi)D_i}{\pi\tau^2\varepsilon^2}\delta_{ij} + \mathcal{O}(\varepsilon^{\alpha_0-1}).$
			By assumption, 
			$r_0>0$ sufficiently small such that 
			$ U_{2r_0}(\xi_0)\cap U_{2r_0}(\xi_1)=\emptyset,$
			and  for $l=0,1$, if $\xi_{l}\in \Sigma$, 
			$ U_{2r_0}(\xi_l) \subset \subset \Sigma.$
			\[ \langle P\Psi^i_{\tau^0,\xi_0}, P\Psi^j_{\tau^1,\xi_1}\rangle= \int_{\Sigma \setminus U_{2r_0}(\xi_0)} + \int_{\Sigma\cap U_{2r_0}(\xi_0)} \varepsilon^2 \chi_{\xi_0}e^{-\varphi_{\xi} } e^{U_{\tau^0,\xi_0}} \Psi^i_{\tau^0,\xi_0} P \Psi^j_{\tau^1,\xi_1} \, dv_g . \]
			As $\varepsilon\rightarrow 0$, we have
			\begin{eqnarray*}
				\int_{\Sigma \setminus U_{2r_0}(\xi_0)}  \varepsilon^2\chi_{\xi_0}(x) e^{-\varphi_{\xi}(x)} e^{U_{\tau^0,\xi_0}} \Psi^i_{\tau^0,\xi_0} P \Psi^j_{\tau^1,\xi_1} \, dv_g 
				= {\mathcal{O}}(\varepsilon^2 \|P\Psi^j_{\tau^1,\xi_1}\|)={\mathcal{O}}(\varepsilon). 
			\end{eqnarray*}
			By  Lemma~\ref{lemb3}, for $j\neq 0$ 
			\begin{equation*}
				\begin{array}{lcl}
					&&	\int_{ U_{2r_0}(\xi_0)\cap \Sigma}  \varepsilon^2 \chi_{\xi_0} e^{-\varphi_{\xi_0}}e^{U_{\tau^0,\xi_0}} \Psi^i_{\tau^0,\xi_0} P \Psi^j_{\tau^1,\xi_1}  \, dv_g \\
					&	= &   \int_{ U_{2r_0}(\xi_0) }   \varepsilon^2 \chi_{\xi_0} e^{-\varphi_{\xi_0}}e^{U_{\tau^0,\xi_0}} \Psi^i_{\tau^0,\xi_0} \left(  \chi_{\xi_1} \frac{ 4y_{\xi_1}(x)_j}{ \tau^2\varepsilon^2 +|y_{\xi_1}(x)|^2} + \varrho(\xi_1) H^j(x, \xi_1) +{ \mathcal{O}}(\varepsilon^{\alpha_0})\right)\\
					&=& \varrho(\xi_i)H^j(\xi_0,\xi_1)\int_{U_{2r_0}(\xi_0)}   \varepsilon^2 \chi_{\xi_0} e^{-\varphi_{\xi_0}}e^{U_{\tau^0,\xi_0}} \Psi^i_{\tau^0,\xi_0} \, dv_g\\
					&& +\mathcal{O}\left( \int_{ U_{2r_0}(\xi_0) }   \varepsilon^2 \chi_{\xi_0} e^{-\varphi_{\xi_0}}e^{U_{\tau^0,\xi_0}} \Psi^i_{\tau^0,\xi_0}(|y_{\xi_0}|+\varepsilon^{\alpha_0}) \, dv_g\right)= \mathcal{O}(\varepsilon^{\alpha_0-1});
				\end{array}
			\end{equation*}
			for $j=0$, 
			\begin{equation*}
				\begin{array}{lcl}
					&&	\int_{ U_{2r_0}(\xi_0)\cap \Sigma}  \varepsilon^2 \chi_{\xi_0} e^{-\varphi_{\xi_0}}e^{U_{\tau^0,\xi_0}} \Psi^i_{\tau^0,\xi_0} P \Psi^j_{\tau^1,\xi_1}  \, dv_g \\
					&	= &   \int_{ U_{2r_0}(\xi_0) }   \varepsilon^2 \chi_{\xi_0} e^{-\varphi_{\xi_0}}e^{U_{\tau^0,\xi_0}} \Psi^i_{\tau^0,\xi_0} \left(  -\chi_{\xi_1}\frac{ 4\tau\varepsilon^2 }{ \tau^2\varepsilon^2 +|y_{\xi_1}|^2}+\mathcal{O}(\varepsilon^{\alpha_0+1})\right)= \mathcal{O}(\varepsilon^{\alpha_0}).
				\end{array}
			\end{equation*}
			Therefore  for any $\xi_1\neq \xi_0$, 
			\[ \langle P\Psi^i_{\tau^0,\xi_0},  P\Psi^j_{\tau^1,\xi_1}\rangle=\begin{cases}
				\mathcal{O}(\varepsilon^{\alpha_0}) &\text{ when } i \text{ or } j=0\\
				\mathcal{O}(\varepsilon^{\alpha_0-1})& \text{ otherwise }
			\end{cases}.  \]
		\end{proof}
		{
			\begin{remark}~\label{rklem4}
				Analogue to the proof in Lemma~\ref{lem5}, for any $\alpha_0\in(0,1)$, we have as $\varepsilon\rightarrow 0$ for $j,i= 1,2$ for $\xi\in\intsigma$ and $i,j=0,1$ for $\xi\in\partial\Sigma$,
				\[ \langle P\Psi^i_{\tau,\xi}, \partial_{\xi_j}PU_{\tau,\xi}\rangle=
				\frac{ 8\varrho(\xi)D_i}{\pi\tau^2\varepsilon^2}\delta_{ij} + {\mathcal{O}}(\varepsilon^{\alpha_0-1}),
				\]
				and 
				\[ \langle P\Psi^i_{\tau^0,\xi_0},  \partial_{\xi_j}PU_{\tau^1,\xi_1}\rangle=\begin{cases}
					\mathcal{O}(\varepsilon^{\alpha_0}) &\text{ when } i \text{ or } j=0\\
					{\mathcal{O}}(\varepsilon^{\alpha_0-1})& \text{ otherwise }
				\end{cases}, \]
				where three different points  $\xi,\xi_0,\xi_{1}\in\Sigma$ and uniformly in  $\tau,\tau^0,\tau^1$ are bounded away from zero and the $\delta_{ij}$ is the Kronecker symbol, and  $D_0=\int_{\mathbb{R}^2}  \frac{1-|y|^2}{ (1+|y|^2)^4 }\, d y$, $D_1= D_2=\int_{\mathbb{R}^2} \frac{ |y|^2 }{ (1+|y|^2)^4}\, d y . $
			\end{remark}
		}
		In the remaining part, we  consider $\xi=(\xi_1,\cdots,\xi_{k+l})  \text{ in a compact subset of } \Xi^\prime_{k,l}$.
		Next, we give some technical lemmas to prove Proposition~\ref{thm2} which reduces the problem into a finite-dimensional problem. 
		\begin{lemma}~\label{lem5}
			Let $\xi=(\xi_1,\cdots,\xi_{k+l}) \in M_{\delta}$ (see~\eqref{eq:def_M_delta}).  
			For any $p\in [1,2)$, there is a positive constant $c:=c(p)$ such that for any $\varepsilon>0$,
			\[ 
			\left| \varepsilon^2 Ve^{\sum_{i=1}^{k+l} PU_i}-\varepsilon^2 \sum_{i=1}^{k+l} e^{-\varphi_i}\chi_{i}  e^{U_i}\right|_{L^p(\Sigma)}\leq c  \varepsilon^{\frac{2-p}{p}}.\] 
		\end{lemma}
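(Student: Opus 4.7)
The plan is to decompose $\Sigma$ into regions around each $\xi_i$ and estimate the integrand separately. Fix $r_0>0$ small enough that the neighborhoods $U_{2r_0}(\xi_i)$ are pairwise disjoint (possible since $\xi\in M_\delta$), and write
\[
\Sigma=\bigcup_{i=1}^{k+l}U_{r_0}(\xi_i)\;\cup\;\bigcup_{i=1}^{k+l}\bigl(U_{2r_0}(\xi_i)\setminus U_{r_0}(\xi_i)\bigr)\;\cup\;\Bigl(\Sigma\setminus\bigcup_{i=1}^{k+l}U_{2r_0}(\xi_i)\Bigr).
\]
On each inner disk $U_{r_0}(\xi_i)$ only $\chi_i$ is non-zero (and equals $1$), so the sum on the right of the estimate collapses to $e^{-\varphi_i}e^{U_i}$. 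Applying the two forms of Lemma~\ref{lemb1}---the first for $PU_i$, the second for $PU_j$ with $j\neq i$ (since $x$ is away from $\xi_j$)---and then invoking the defining identity~\eqref{tau} for $\tau_i=\tau_i(\xi_i)$, I would rewrite
\[
V(x)\,e^{\sum_jPU_j(x)}=f_i(x)\,e^{U_i(x)}\bigl(1+\mathcal{O}(\varepsilon^{1+\alpha_0})\bigr)
\]
with the smooth function
\[
f_i(x):=\frac{V(x)}{V(\xi_i)}\exp\!\Bigl(\varrho(\xi_i)\bigl(H^g(x,\xi_i)-H^g(\xi_i,\xi_i)\bigr)+\sum_{j\neq i}\varrho(\xi_j)\bigl(G^g(x,\xi_j)-G^g(\xi_i,\xi_j)\bigr)\Bigr)
\]
satisfying $f_i(\xi_i)=1$. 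Since also $e^{-\varphi_i(\xi_i)}=1$ (from the normalization $\hat\varphi_{\xi_i}(0,0)=0$), the smooth difference $f_i-e^{-\varphi_i}$ vanishes at $\xi_i$ and is therefore $\mathcal{O}(|y_{\xi_i}(x)|)$, yielding the pointwise bound
\[
\varepsilon^2\bigl|Ve^{\sum_jPU_j}-e^{-\varphi_i}e^{U_i}\bigr|\leq C\varepsilon^2\bigl(|y_{\xi_i}(x)|+\varepsilon^{1+\alpha_0}\bigr)e^{U_i(x)}\quad\text{on }U_{r_0}(\xi_i).
\]

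The core scaling step is the rescaling $y=\tau_i\varepsilon z$ in the isothermal chart at $\xi_i$; since $\int|z|^p(1+|z|^2)^{-2p}\,dz$ and $\int(1+|z|^2)^{-2p}\,dz$ both converge for $p\in[1,2)$, a direct computation yields
\[
\Bigl(\int_{U_{r_0}(\xi_i)}|y_{\xi_i}|^p e^{pU_i}\,dv_g\Bigr)^{1/p}=\mathcal{O}(\varepsilon^{(2-3p)/p}),\quad\Bigl(\int_{U_{r_0}(\xi_i)}e^{pU_i}\,dv_g\Bigr)^{1/p}=\mathcal{O}(\varepsilon^{(2-4p)/p}).
\]
Multiplying by $\varepsilon^2$ gives contributions of order $\varepsilon^{(2-p)/p}$ and $\varepsilon^{(2-p)/p+\alpha_0}$ respectively, both within the claimed bound. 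On the annular regions $U_{2r_0}(\xi_i)\setminus U_{r_0}(\xi_i)$ and on $\Sigma\setminus\bigcup_iU_{2r_0}(\xi_i)$, the second (Green's-function) form of Lemma~\ref{lemb1} makes every $PU_j$ uniformly bounded, so the integrand is $\mathcal{O}(\varepsilon^2)$ pointwise and the corresponding $L^p$-contribution is $\mathcal{O}(\varepsilon^2)$, which is dominated by $\varepsilon^{(2-p)/p}$ for $p\in[1,2)$. Uniformity of $c$ in $\xi\in M_\delta$ follows from the compactness of $M_\delta$ together with the uniform $C^{1,\alpha}$-bounds on $H^g(\cdot,\xi)$ from Lemma~\ref{lem:re_green}.

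The main difficulty is precisely the cancellation on the inner disks: the scaling parameter $\tau_i$ is chosen in~\eqref{tau} exactly so that the prefactor $f_i$ matches $e^{-\varphi_i}$ (both equal $1$) at the blow-up point $\xi_i$. Without this exact match the error coefficient would be $\mathcal{O}(1)$ instead of $\mathcal{O}(|y_{\xi_i}|)$, and the bound would blow up as $\varepsilon^{(2-p)/p-1}$. Recognizing the algebraic identity linking $\log(8\tau_i^2)$ with $H^g(\xi_i,\xi_i)$ and the cross-Green's-function values is what reduces the error to a first-order Taylor remainder that exactly balances the $e^{pU_i}$-weighted bubble integral.
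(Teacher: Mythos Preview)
Your proposal is correct and follows essentially the same route as the paper's own proof: decompose $\Sigma$ into neighborhoods of the $\xi_i$ and the complement, use Lemma~\ref{lemb1} together with the defining relation~\eqref{tau} to see that $Ve^{\sum_j PU_j}$ matches $e^{-\varphi_i}e^{U_i}$ to order $\mathcal{O}(|y_{\xi_i}|+\varepsilon^{1+\alpha_0})$ near $\xi_i$, and then carry out the rescaling $y=\tau_i\varepsilon z$ to obtain the $\varepsilon^{(2-p)/p}$ bound. The only cosmetic difference is that you split off the annuli $U_{2r_0}(\xi_i)\setminus U_{r_0}(\xi_i)$ explicitly, whereas the paper absorbs them into the inner region and handles the cut-off via an $\mathcal{O}(\varepsilon^{2p})$ remainder; both are equivalent.
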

		\begin{proof} Let $\mathcal{D}\subset \Xi^\prime_{k,l}$ be a compact subset. Then there exists $\delta>0$ such that $ \mathcal{D}\subset  M_{\delta}$. There is a uniform $r_0>0$ for any  $\xi\in M_{\delta}$.  By calculation, we deduce that 
			\begin{eqnarray*}
				\int_{\Sigma} \left|\varepsilon^2V e^{\sum_{i=1}^{k+l} PU_i } -\varepsilon^2\sum_{i=1 }^{k+l} e^{-\varphi_i} \chi_{i}  e^{U_i}\right|^p \, dv_g 
				& =& \sum_{i=1}^{k+l} 
				\int_{\Sigma\cap U_{2r_0}(\xi)}  \left|\varepsilon^2V e^{\sum_{i=1}^{k+l} PU_i } -\varepsilon^2\sum_{h=1 }^{k+l}e^{-\varphi_h}\chi_h e^{U_h} \right|^p \, dv_g\\
				&&+ \int_{\Sigma\setminus  \cup_{i=1}^{k+l} U_{2r_0}(\xi)} \left|\varepsilon^2V e^{\sum_{i=1}^{k+l} PU_i } -\varepsilon^2\sum_{h=1 }^{k+l} e^{-\varphi_h} \chi_he^{U_h}\right|^p \, dv_g,
			\end{eqnarray*}
			and 
			as $\varepsilon\rightarrow 0$, 
			$\int_{\Sigma \setminus \cup_{i=1}^{k+l} U_{2r_0}(\xi)} |\varepsilon^2V e^{\sum_{i=1}^{k+l} PU_i } -\varepsilon^2\sum_{h=1 }^{k+l}e^{-\varphi_h}\chi_h e^{U_h}|^p \, dv_g= {\mathcal{O}}(\varepsilon^{2p}).$
			By Lemma \ref{lemb1}, for any $x\in U_{2r_0}(\xi_h)$
			\begin{eqnarray*} ~\label{p1}
				&&\sum_{i=1}^{k+l} PU_i -\chi_h U_h+\varphi_h\\
				&=&{ \left( \sum_{i\neq h} \varrho(\xi_i) G^g(\xi_h, \xi_i)+ \varrho(\xi_h) H^g(\xi_h,\xi_h)  -\log (8\tau_h^2)\right)} +\mathcal{O}( |y_{\xi_h}|  +\varepsilon^{1+\alpha_0})\\
				&=& -\log V(\xi_h)+ \mathcal{O} (\varepsilon ^{1+\alpha_0}+|y_{\xi_h}|).
			\end{eqnarray*}
			Hence, for 	 $p\in [1, 2)$
			\begin{eqnarray*}
				&&\int_{ U_{2r_0}(\xi_h)\cap \Sigma} | \varepsilon^2V e^{\sum_{i=1}^{k+l} PU_i } -\varepsilon^2 e^{-\varphi_h}\chi_h e^{U_h}|^p  \, dv_g\\
				&=&  \int_{ U_{r_0}(\xi_h)\cap \Sigma} \left|  \varepsilon^2 e^{U_h} ( e^{ \sum_{i=1}^{k+l} PU_i - \chi_h U_h+ \varphi_h +\log V} -1 )\right|^p \, dv_g +\mathcal{ O}
				(\varepsilon^{2p})\\
				&=& \mathcal{O}\left( \int_{ U_{r_0}(\xi_h )\cap \Sigma} \varepsilon^{2p} e^{pU_h} (|y_{\xi_h}|+\varepsilon^{1+\alpha_0})^p \, dv_g\right) +\mathcal{ O}(\varepsilon^{2p}) \\
				&=&\mathcal{O}\left( \int_{ B_{r_0}^{\xi_h}} \left( \frac{8\tau_h^2\varepsilon^2( |y|+\varepsilon^{1+\alpha_0}) }{(\tau^2_h\varepsilon^2 +|y|^2)^2 }\right)^p  \, d y +\varepsilon^{2p}\right)=\mathcal{O}(\varepsilon^{2-p}).
			\end{eqnarray*}
		\end{proof}
		\begin{lemma}~\label{lem6}
			For any $p\geq 1$ and $r>1$, there are positive constants $c_1, c_2$ such that for any $\varepsilon>0$, the following estimates hold for any $\phi_1,\phi_2\in \oH$. 
			\begin{equation}~\label{diff1}
				\| \varepsilon^2 V e^{\sum_{i=1}^{k+l} PU_i }(e^{\phi_1} -1-\phi_1)\|_{p}\leq c_1 e^{c_2 \|\phi_1\|^2} \varepsilon^{\frac{(2-2pr)}{pr}}  \|\phi_1\|^2,
			\end{equation}
			and 
			\begin{eqnarray}
				~\label{diff12}
				&&\| \varepsilon^2 V e^{\sum_{i=1}^{k+l} PU_i }(e^{\phi_1}-e^{\phi_2} -(\phi_1-\phi_2))\|_{p}\leq  c_1 e^{c_2( \|\phi_1\|^2+\|\phi_2\|^2) } \varepsilon^{\frac{(2-2pr)}{pr}} (\|\phi_1\|+\|\phi_2\|) \|\phi_1-\phi_2\|. \nonumber
			\end{eqnarray}
		\end{lemma}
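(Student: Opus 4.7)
The plan is to reduce both inequalities to standard H\"older plus Moser--Trudinger estimates via pointwise Taylor bounds, together with Lemma~\ref{lemb2} for the bubble factor. First I would establish pointwise estimates for the nonlinear differences. Writing $e^s-1-s=\int_0^s(s-t)e^t\,dt$ yields $|e^{\phi_1}-1-\phi_1|\leq \frac{1}{2}\phi_1^2 e^{|\phi_1|}$. Similarly, $e^{\phi_1}-e^{\phi_2}-(\phi_1-\phi_2)=\int_{\phi_2}^{\phi_1}(e^t-1)\,dt$ together with $|e^t-1|\leq |t|e^{|t|}$ yields
\[
|e^{\phi_1}-e^{\phi_2}-(\phi_1-\phi_2)|\leq |\phi_1-\phi_2|(|\phi_1|+|\phi_2|)e^{|\phi_1|+|\phi_2|}.
\]

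Next I would apply H\"older's inequality to split off the bubble factor. With $r'$ conjugate to $r$, one gets
\[
\bigl\|\varepsilon^2 Ve^{\sum_{i=1}^{k+l} PU_i}(e^{\phi_1}-1-\phi_1)\bigr\|_p
\leq \bigl\|\varepsilon^2 Ve^{\sum_{i=1}^{k+l} PU_i}\bigr\|_{pr}\bigl\|e^{\phi_1}-1-\phi_1\bigr\|_{pr'}.
\]
By Lemma~\ref{lemb1}, in each local chart $U_{2r_0}(\xi_h)$ one has $\sum_i PU_i = \chi_h U_h + O(1)$, while off the union of these neighborhoods $\sum_i PU_i=O(1)$; since $V$ is smooth and bounded, $\varepsilon^2 Ve^{\sum PU_i}$ is pointwise dominated (up to a uniform constant) by $\sum_h \varepsilon^2\chi_h e^{U_h}$ plus a bounded $O(\varepsilon^2)$ term. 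Lemma~\ref{lemb2} then gives $\|\varepsilon^2 Ve^{\sum PU_i}\|_{pr}=O(\varepsilon^{(2-2pr)/(pr)})$.

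For the remaining factor, the pointwise Taylor bound combined with a further H\"older split yields
\[
\|e^{\phi_1}-1-\phi_1\|_{pr'}\leq \tfrac{1}{2}\|\phi_1^2 e^{|\phi_1|}\|_{pr'}\leq \tfrac{1}{2}\|\phi_1\|_{4pr'}^2\,\|e^{|\phi_1|}\|_{2pr'}.
\]
The Sobolev-type embedding $\oH\hookrightarrow L^{4pr'}(\Sigma)$ (established in the paper via Young's inequality from Moser--Trudinger) controls $\|\phi_1\|_{4pr'}\leq C\|\phi_1\|$, while another application of Young's inequality combined with the Moser--Trudinger inequality in the form stated before Lemma~\ref{t1} gives $\|e^{|\phi_1|}\|_{2pr'}\leq C e^{c_2\|\phi_1\|^2}$ for a constant $c_2>0$ depending only on $p$ and $r$. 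Multiplying the three pieces yields the first claimed estimate. The second estimate follows by the same scheme with a four-factor H\"older decomposition corresponding to the bubble, $|\phi_1-\phi_2|$, $|\phi_1|+|\phi_2|$, and $e^{|\phi_1|+|\phi_2|}$, producing the prefactor $(\|\phi_1\|+\|\phi_2\|)\|\phi_1-\phi_2\|$ and the exponential $e^{c_2(\|\phi_1\|^2+\|\phi_2\|^2)}$.

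The main technical point, rather than an obstacle, is to verify that $c_1$ and $c_2$ are independent of $\varepsilon$ and of $\phi_1,\phi_2$: once $p$ and $r$ are fixed, the Moser--Trudinger constant is uniform, and the $\varepsilon$-dependence is entirely absorbed into the $\|\varepsilon^2 Ve^{\sum PU_i}\|_{pr}$ factor controlled by Lemma~\ref{lemb2}. A minor book-keeping point is that the bubble estimate must be applied ``bubble-by-bubble'' on disjoint neighborhoods $U_{2r_0}(\xi_i)$, but this is automatic since the $\xi_i$ are separated for $\xi\in M_\delta$.
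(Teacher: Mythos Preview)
Your proposal is correct and follows essentially the same route as the paper: pointwise Taylor/mean-value bounds on the exponential differences, a H\"older splitting separating the bubble factor $\varepsilon^2 V e^{\sum PU_i}$ from the $\phi$-dependent terms, and Moser--Trudinger plus Sobolev embedding for the latter. The only cosmetic difference is that the paper computes $\int_\Sigma V^{pr}e^{pr\sum PU_i}\,dv_g\leq C\varepsilon^{2-4pr}$ directly from Lemma~\ref{lemb1} rather than invoking Lemma~\ref{lemb2}, and it performs a single three-way H\"older split with exponents $r,s,t$ instead of your two-stage split; both yield the same bound.
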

		\begin{proof}
			By the mean value theorem,  for some $s\in (0,1)$
			\[ |(e^{\phi_1}-e^{\phi_2} -(\phi_1-\phi_2)|\leq \left| e^{s\phi_1+(1-s)\phi_2} -1\right||\phi_1-\phi_2|\leq  e^{|\phi_1|+|\phi_2|} |\phi_1-\phi_2|(|\phi_1|+|\phi_2|).\]
			By applying the H\"{o}lder Inequality, Sobolev Inequality, and Moser-Trudinger Inequality, we derive the following estimate:
			\begin{eqnarray*}
				&&	\left( \int_{\Sigma} V^pe^{p \sum_{i=1}^{k+l}PU_{i}} | e^{\phi_1} -e^{\phi_2}-(\phi_1-\phi_2)| ^p \, dv_g\right)^{1/p} \\
				&\leq& C \sum_{h=1}^2
				\left(  \int_{\Sigma}
				V^p	e^{p \sum_{i=1}^{k+l}PU_{i}}(  e^{|\phi_1|+|\phi_2|}  |\phi_1-\phi_2||\phi_h| ) ^p \, dv_g \right)^{1/p}  \\
				&\leq &  C \sum_{h=1}^2 \left(\int_{\Sigma} V^{pr}e^{pr\sum_{i=1}^{k+l} PU_{i}} \, dv_g\right)^{\frac{1}{pr}} \left( \int_{\Sigma}   e^{ps (|\phi_1|+|\phi_2|)} \, dv_g\right)^{\frac{1}{ps}}\left(\int_{\Sigma} |\phi_1-\phi_2|^{pt} |\phi_h|^{pt} \, dv_g\right)^{\frac{1}{pt}} \\
				&\leq & C \sum_{h=1}^2 \left(\int_{\Sigma} V^{pr}e^{pr\sum_{i=1}^{k+l} PU_{i}} \, dv_g(x)\right)^{\frac{1}{pr}} e^{\frac{ps}{8\pi}(\|\phi_1\|^2+\|\phi_2\|^2)} \|\phi_1-\phi_2\| \|\phi_h\|,
			\end{eqnarray*}
			where $ r,s,t \in (1, +\infty), {  \frac{1}{r}+\frac{1}{s}+\frac{1}{t}=1}$. 
			By Lemma~\ref{lemb1}, it follows that 
			\begin{eqnarray*}
				&&	\int_{\cup_{i=1}^{k+l} U_{2r_0}(\xi_i)} V^{pr} e^{pr\sum_{i=1}^{k+l} PU_{i}} \, dv_g\\
				&= & \sum_{i=1}^{k+l} 	\int_{U_{2r_0}(\xi_i)} \exp\left\{
				pr\chi_iU_i + pr\left( \sum_{h\neq i} G^g(\xi_i,\xi_h) +\varrho(\xi_i) H^g(\xi_i,\xi_i)\right. \right. \\
				&& \left.\left.+ \log V(\xi_i) -\log( 8\tau_i^2)\right) + \mathcal{O}( \varepsilon^{1+\alpha_0} +|y_{\xi_i}|)\right \} \, dv_g\\
				&  \leq &      C   \left( \sum_{i=1}^{k+l} \int_{U_{2r_0}(\xi_i)} e^{pr\chi_i U_i} ( 1 + { \mathcal{O}}(\varepsilon^{1+\alpha_0}+ |y_{\xi_i}(x)|)) \, dv_g(x) \right)\\
				&\leq &
				C   \left( \sum_{i=1}^{k+l} \int_{B^{\xi_i}_{2r_0}}e^{\hat{\varphi}_{\xi_i}(y
					)}  \left(  \frac{ 8\tau_i^2}{(\tau_i^2\varepsilon^2+ |y|^2)^2}\right)^{pr}( 1 + \mathcal{O}(\varepsilon^{1+\alpha_0}+ |y|)) \, d y \right)\\
				&\leq& C\varepsilon^{2-4pr}.
			\end{eqnarray*}
			By the definition of $PU_i$, $PU_i=\mathcal{O}(1)$  in $\Sigma\setminus U_{2r_0}(\xi_i)$. It follows that 
			\[ \sum_{\Sigma\setminus \cup_{i=1}^{k+l} U_{2r_0}(\xi_i) } e^{pr \sum_{i=1}^{k+l} PU_i} =\mathcal{ O}(1). \]
			Therefore, the estimate~\eqref{diff12} holds and if we take $\phi_2\equiv 0$, we obtain the estimate ~\eqref{diff1}. 
		\end{proof}

		Next, we will give some technique lemmas to obtain the $C^1$-expansion of the reduced functional $\tilde{E}_{\varepsilon}$ defined by~\eqref{eq:def_reduced_tilde_E}. 
		\begin{lemma}~\label{lem7}
			As $\varepsilon\rightarrow 0$, the following asymptotic expansions hold
			\begin{equation*}
				\begin{array}{lcl}
					\lan PU_i,PU_i\ran&=&\varrho(\xi_i) (6\log 2 -4\log \varepsilon-2\log(8\tau_i^2)+\varrho(\xi_i)H^g(\xi_i,\xi_i)-2) \\
					&&+\mathcal{O}(\varepsilon|\log\varepsilon|),
				\end{array} 
			\end{equation*}
			and for any $i\neq j$,
			$
			\lan PU_i, \nabla PU_j\ran=  \varrho(\xi_i)\varrho(\xi_j) G^g(\xi_i,\xi_j) +\mathcal{O}(\varepsilon). $
		\end{lemma}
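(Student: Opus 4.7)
The plan is to exploit the PDE~\eqref{eq:projection} satisfied by $PU_i$ to collapse each inner product to a single integral against the concentrated bubble measure, and then combine Lemma~\ref{lemb1} with a rescaling argument. Since $PU_j \in \oH$, using the Neumann boundary condition and $\int_\Sigma PU_j\,dv_g = 0$ gives
\[
\langle PU_i, PU_j\rangle = \int_\Sigma \bigl[(-\Delta_g+\beta)PU_i\bigr]\,PU_j\,dv_g = \int_\Sigma \varepsilon^2 \chi_i e^{-\varphi_i} e^{U_i}\,PU_j\,dv_g.
\]
In isothermal coordinates this measure is $\varepsilon^2 \chi(|y|/r_0)\,8\tau_i^2/(\tau_i^2\varepsilon^2+|y|^2)^2\,dy$ (the Jacobian $e^{\hat\varphi_i}$ cancels $e^{-\varphi_i}$), and the rescaling $y = \tau_i\varepsilon z$ shows it integrates to $\varrho(\xi_i) + \mathcal{O}(\varepsilon^2)$ and, for any function $F$ which is $C^1$ in a neighborhood of $\xi_i$, produces $\int \varepsilon^2\chi_i e^{-\varphi_i}e^{U_i}F\,dv_g = \varrho(\xi_i)F(\xi_i) + \mathcal{O}(\varepsilon)$ by first-order Taylor expansion.

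For the off-diagonal case $i\neq j$, Lemma~\ref{lemb1} gives $PU_j = \varrho(\xi_j) G^g(\cdot,\xi_j) + \mathcal{O}(\varepsilon^{1+\alpha_0})$ uniformly on any neighborhood of $\xi_i$ disjoint from $\xi_j$. Since $G^g(\cdot,\xi_j)$ is smooth there, the Taylor-plus-concentration argument immediately yields $\langle PU_i, PU_j\rangle = \varrho(\xi_i)\varrho(\xi_j)G^g(\xi_i,\xi_j) + \mathcal{O}(\varepsilon)$.

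For the diagonal case, substitute the full expansion $PU_i = \chi_i(U_i - \log(8\tau_i^2)) + \varrho(\xi_i)H^g(\cdot,\xi_i) + \mathcal{O}(\varepsilon^{1+\alpha_0})$ into the integral above. The smooth $H^g$ piece contributes $\varrho(\xi_i)^2 H^g(\xi_i,\xi_i) + \mathcal{O}(\varepsilon)$ by the same argument. Using $U_i - \log(8\tau_i^2) = -2\log(\tau_i^2\varepsilon^2+|y|^2)$, the singular piece becomes
\[
-2\int_{B^{\xi_i}_{2r_0}} \chi(|y|/r_0)^2\,\varepsilon^2\frac{8\tau_i^2}{(\tau_i^2\varepsilon^2+|y|^2)^2}\log(\tau_i^2\varepsilon^2+|y|^2)\,dy.
\]
Splitting $\log(\tau_i^2\varepsilon^2+|y|^2) = \log(\tau_i^2\varepsilon^2) + \log\bigl(1+|y|^2/(\tau_i^2\varepsilon^2)\bigr)$ and rescaling $y=\tau_i\varepsilon z$, the first piece yields $-2\varrho(\xi_i)\log(\tau_i^2\varepsilon^2) + \mathcal{O}(\varepsilon^2|\log\varepsilon|)$, and the second becomes $-2\int \frac{8\log(1+|z|^2)}{(1+|z|^2)^2}\,dz + \mathcal{O}(\varepsilon^2)$, integrated over $\mathbb{R}^2$ or $\mathbb{R}^2_+$ depending on whether $\xi_i\in\intsigma$ or $\xi_i\in\partial\Sigma$. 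The key explicit calculation, via polar coordinates followed by the substitution $u=1+r^2$ and one integration by parts ($\int_1^\infty \log u/u^2\,du = 1$), gives $8\pi$ in the interior case and $4\pi$ on the boundary: in both cases this equals $\varrho(\xi_i)$. Collecting,
\[
\langle PU_i,PU_i\rangle = \varrho(\xi_i)\bigl[-4\log\varepsilon - 2\log(\tau_i^2) - 2 + \varrho(\xi_i)H^g(\xi_i,\xi_i)\bigr] + \mathcal{O}(\varepsilon|\log\varepsilon|),
\]
and the identity $-2\log(\tau_i^2) = 6\log 2 - 2\log(8\tau_i^2)$ rewrites this in the claimed form.

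The main hurdle is bookkeeping the error terms: Taylor-expanding the conformal factor $e^{\hat\varphi_i}$ and the smooth functions $H^g(\cdot,\xi_i), G^g(\cdot,\xi_j)$, tail contributions from truncating $\mathbb{R}^2$-integrals to $|y|<r_0$, and verifying the unified normalization $\int \frac{8\log(1+|z|^2)}{(1+|z|^2)^2}\,dz = \varrho(\xi_i)$ across both the interior and boundary geometries, so that a single formula governs both cases.
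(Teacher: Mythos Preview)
Your proposal is correct and follows essentially the same approach as the paper: reduce $\langle PU_i,PU_j\rangle$ to $\int_\Sigma \varepsilon^2\chi_i e^{-\varphi_i}e^{U_i}PU_j\,dv_g$ via the defining equation~\eqref{eq:projection}, insert the expansion of Lemma~\ref{lemb1}, and rescale $y=\tau_i\varepsilon z$ to evaluate the resulting integrals. The paper records the same two key integral identities (for $\int \frac{\varepsilon^2}{(\varepsilon^2+|y|^2)^2}\,dy$ and $\int \frac{\varepsilon^2\log((\varepsilon^2+|y|^2)/\varepsilon^2)}{(\varepsilon^2+|y|^2)^2}\,dy$) that your computation of $\int \frac{8\log(1+|z|^2)}{(1+|z|^2)^2}\,dz=\varrho(\xi_i)$ encodes; the only difference is cosmetic packaging of the logarithmic split and the final algebraic rewrite $-2\log(\tau_i^2)=6\log 2-2\log(8\tau_i^2)$.
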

		\begin{proof}
			Applying Lemma~\ref{lemb1} with~\eqref{tau}, we drive that  as $\varepsilon\rightarrow 0$
			\begin{eqnarray*}
				&&\lan PU_i,PU_i\ran=\int_{\Sigma} |\nabla PU_i|_g^2 +\beta |PU_i|^2 \, dv_g= \varepsilon^2 \int_{\Sigma}\chi_i e^{-\varphi_i} e^{U_i} PU_i  \, dv_g \\
				&=& \int_{ U_{r_0}(\xi_i)} \frac{8\tau_i^2\varepsilon^2}{ (\tau_i^2\varepsilon^2+|y_{\xi_i}|^2)^2 } e^{-\varphi_i} \left(\log \frac 1{(\tau_i^2\varepsilon^2 +|y_{\xi_i}|^2)^2 } +\varrho(\xi_i) H^g(\xi_i,\xi_i) \right. \\
				&&\left. +{\mathcal{O}}(|y_{\xi_i}|+\varepsilon^{1+\alpha_0}  )\right) \, dv_g
				+{\mathcal{O}}(\varepsilon^2) \\
				&=& \int_{ B_{r_0}^{\xi_i}} \frac{8\tau_i^2\varepsilon^2}{ (\tau_i^2\varepsilon^2+|y|^2)^2 }  \left(\log \frac { \tau_i^4\varepsilon^4}{(\tau_i^2\varepsilon^2 +|y|^2)^2 }-2\log (\tau_i^2\varepsilon^2) +\varrho(\xi_i) H^g(\xi_i,\xi_i) \right. \\
				&&\left.  +{\mathcal{O}}(|y|+\varepsilon^{1+\alpha_0}  )\right) \, d y
				+{\mathcal{O}}(\varepsilon^2) \\
				&=& \varrho(\xi_i) (6\log 2 -4\log \varepsilon-2\log(8\tau_i^2)+\varrho(\xi_i)H^g(\xi_i,\xi_i)-2) +\mathcal{O}(\varepsilon|\log\varepsilon|), 
			\end{eqnarray*}
			where we applied the fact that for any $r>0$, as $\varepsilon\rightarrow 0$,
			$	\int _{|y|<r} \frac{\varepsilon^2}{                                                                                              ( \varepsilon^2+ |y|^2)^2} \, d y =\pi- \frac{\pi\varepsilon^2}{r^2}+ \frac{\pi\varepsilon^4}{(r^2+\varepsilon^2)r^2}$
			and 
			$\int_{|y|<r} \frac{ \varepsilon^2\log( \frac{\varepsilon^2+ |y|^2}{\varepsilon^2} )}{ (\varepsilon^2+|y|^2)^2} \, d y=\pi + \frac{\pi\varepsilon^2\log(\varepsilon^2)}{r^2}+ \mathcal{O}(\varepsilon^2).$
			For any $i\neq j$,  Lemma~\ref{lemb1} yields as $\varepsilon\rightarrow 0$
			\begin{eqnarray*}
				&&	\lan PU_i, PU_j\ran= \varepsilon^2 \int_{\Sigma} \chi_{i} e^{-\varphi_i}e^{U_i} PU_j \, dv_g \\
				&=& \int_{U_{2r_0}(\xi_i)} \frac{8\tau_i^2 \varepsilon^2}{(\tau^2_i \varepsilon^2 +|y_{\xi_i}(x)|^2)^2}e^{-\varphi_i(x)}( \varrho(\xi_j) G^g(\xi_i,\xi_j)+{\mathcal{O}}(|y_{\xi_i}(x)|+\varepsilon^{1+\alpha_0}))+{\mathcal{O}} (\varepsilon ^2 \|PU_j\|)\\
				&= & 8\varrho(\xi_j) G^g(\xi_i,\xi_j) \int_{B^{\xi_i}_{2r_0}} \frac{\tau_i^2\varepsilon^2}{(\tau_i^2\varepsilon^2+|y|^2)^2} \, d y +{\mathcal{O}}(\varepsilon) 
				=\varrho(\xi_i)\varrho(\xi_j) G^g(\xi_i,\xi_j) +\mathcal{O}(\varepsilon).
			\end{eqnarray*}
		\end{proof}
		\begin{lemma}~\label{lem8}
			For any $m\in \N_+$ and $k,l\in \N$ with $m=2k+l$, we have  as  $\varepsilon\rightarrow 0
			$
			$$
			\varepsilon^2\int_{\Sigma} Ve^{\sum_{i=1}^{k+l} PU_i } =\sum_{i=1}^{k+l} \varrho(\xi_i)+o(1)= 4\pi m +o(1).$$ 
		\end{lemma}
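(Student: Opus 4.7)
The strategy is to localize the integral near each blow-up point $\xi_h$, rescale to the bubble scale $y=\tau_h\varepsilon z$, and use the specific choice of $\tau_h$ in \eqref{tau} together with Lemma~\ref{lemb1} to cancel all the regular terms. I will split
\[
\varepsilon^2\int_\Sigma V e^{\sum_{i=1}^{k+l} PU_i}\,dv_g
=\sum_{h=1}^{k+l}\varepsilon^2\int_{U_{2r_0}(\xi_h)}V e^{\sum_{i=1}^{k+l} PU_i}\,dv_g
+\varepsilon^2\int_{\Sigma\setminus\bigcup_h U_{2r_0}(\xi_h)} V e^{\sum_{i=1}^{k+l} PU_i}\,dv_g.
\]
For the second term, Lemma~\ref{lemb1} yields $PU_i=\varrho(\xi_i)G^g(\cdot,\xi_i)+\mathcal{O}(\varepsilon^{1+\alpha_0})$ locally uniformly in $\Sigma\setminus\{\xi_i\}$, hence $\sum_i PU_i=\mathcal{O}(1)$ on the set where we integrate, so the whole term is $\mathcal{O}(\varepsilon^2)$.

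For each local integral, I would invoke Lemma~\ref{lemb1} to write, on $U_{2r_0}(\xi_h)$,
\[
\sum_{i=1}^{k+l}PU_i-\chi_h U_h+\varphi_h=\Big(\sum_{i\ne h}\varrho(\xi_i)G^g(\xi_h,\xi_i)+\varrho(\xi_h)H^g(\xi_h,\xi_h)-\log(8\tau_h^2)\Big)+\mathcal{O}(|y_{\xi_h}|+\varepsilon^{1+\alpha_0}),
\]
where I have used the Lipschitz regularity of $G^g(\cdot,\xi_i)$ for $i\ne h$ and of $H^g(\cdot,\xi_h)$ at $\xi_h$. The definition \eqref{tau} of $\tau_h=\tau_h(\xi)$ is tailored precisely so that the parenthesized bracket equals $-\log V(\xi_h)$. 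Combining with $V(x)=V(\xi_h)(1+\mathcal{O}(|y_{\xi_h}|))$ (by smoothness of $V$ and $V(\xi_h)\ge\delta>0$ on $M_\delta$), I conclude
\[
V e^{\sum_{i=1}^{k+l}PU_i}=\chi_h e^{-\varphi_h}e^{U_h}\bigl(1+\mathcal{O}(|y_{\xi_h}|+\varepsilon^{1+\alpha_0})\bigr)\quad\text{on }U_{2r_0}(\xi_h).
\]

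Substituting and passing to the isothermal coordinate via $dv_g=e^{\varphi_h}\,dy$, the local integral becomes
\[
\varepsilon^2\int_{B^{\xi_h}_{2r_0}}\chi\!\left(\tfrac{|y|}{r_0}\right)\frac{8\tau_h^2}{(\tau_h^2\varepsilon^2+|y|^2)^2}\bigl(1+\mathcal{O}(|y|+\varepsilon^{1+\alpha_0})\bigr)\,dy.
\]
The rescaling $y=\tau_h\varepsilon z$ turns this into $\int_{B^{\xi_h}_{r_0/(\tau_h\varepsilon)}}\frac{8}{(1+|z|^2)^2}\,dz$ plus an error $\mathcal{O}(\varepsilon|\log\varepsilon|+\varepsilon^{1+\alpha_0})$. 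As $\varepsilon\to 0$, the domain exhausts $\mathbb{R}^2$ when $\xi_h\in\intsigma$ and $\mathbb{R}^2_+$ when $\xi_h\in\partial\Sigma$, giving the limits $8\pi$ and $4\pi$ respectively, that is, exactly $\varrho(\xi_h)$. Summing over $h=1,\dots,k+l$ produces $\sum_h\varrho(\xi_h)=8\pi k+4\pi l=4\pi m$.

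The only delicate point is handling the boundary points: one must verify that the half-disk $B^{\xi_h}$ rescaled properly fills out $\mathbb{R}^2_+$ and that the conformal factor contribution $e^{\varphi_h}\cdot e^{-\varphi_h}=1$ exactly cancels, so no Jacobian correction from the Riemannian volume remains. Once this bookkeeping is in place the argument is essentially a direct calculation, already carried out in spirit in the proof of Lemma~\ref{lem5}; the novelty here is only that one must keep the $o(1)$ rather than an $L^p$-norm.
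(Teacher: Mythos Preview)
Your proposal is correct and follows essentially the same approach as the paper: localize to the neighborhoods $U_{2r_0}(\xi_h)$, invoke Lemma~\ref{lemb1} together with the choice \eqref{tau} of $\tau_h$ to reduce the integrand to $\chi_h e^{-\varphi_h}e^{U_h}(1+\mathcal{O}(|y_{\xi_h}|+\varepsilon^{1+\alpha_0}))$, then rescale $y=\tau_h\varepsilon z$ and compute the limiting integral over $\mathbb{R}^2$ or $\mathbb{R}^2_+$. The paper's proof is slightly more compact in that it tracks the conformal factor $e^{\hat\varphi_i(\tau_i\varepsilon z)}=1+\mathcal{O}(\varepsilon|z|)$ explicitly after rescaling and records the final error as $\mathcal{O}(\varepsilon)$, but the argument is the same.
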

		\begin{proof}
			Applying Lemma~\ref{lemb1} and~\eqref{tau}, as  $\varepsilon\rightarrow 0$
			\begin{eqnarray*}
				&&\varepsilon^2\int_{\Sigma} Ve^{\sum_{i=1}^{k+l} PU_i} \, dv_g\\
				&=&\sum_{i=1}^{k+l} \varepsilon^2\int_{ U_{2r_0}(\xi_i)} e^{ \chi_i U_i +\varrho(\xi_i) H^g(\cdot,\xi_i) -\log 8\tau_i^2 +\sum_{j\neq i } \varrho(\xi_j) G^g(\cdot,\xi_j)+ {\mathcal{O}}(\varepsilon^{1+\alpha_0})} \, dv_g+{\mathcal{O}}(\varepsilon^2 )\\
				&=&\sum_{i=1}^{k+l} \int_{U_{r_0}(\xi_{i})} \frac{8\tau^2_i \varepsilon^2  e^{\varrho(\xi_i) H^g(\xi_i,\xi_i)-\log (8\tau_i^2)+\log V(\xi_i) + \sum_{j\neq i}\varrho(\xi_j) G^g(\xi_i,\xi_j)} }{ (\tau_i^2\varepsilon^2+ |y_{\xi}(x)|^2)^2} \\
				&&  (1+{\mathcal{O}}(|y_{\xi}| +\varepsilon^{1+\alpha_0})) \, dv_g  +{\mathcal{O}}(\varepsilon^2)\\
				&=& \sum_{i=1}^{k+l} \int_{ B_{r_0}^{\xi_i} } \frac{8 \tau_i^2\varepsilon^2  e^{\hat{\varphi}_i(y)}}{( \tau_i^2\varepsilon^2+|y|^2)^2} (1+{\mathcal{O}}(|y|+\varepsilon^{1+\alpha_0})  ) \, d y +{\mathcal{O}}(\varepsilon^2)\\
				&=&  \sum_{i=1}^{k+l} \int_{ \frac 1 {\tau_i\varepsilon}B_{r_0}^{\xi_i} } (1+\mathcal{O}(\varepsilon|y|))(1+\mathcal{O}(\varepsilon|y|+\varepsilon^{1+\alpha_0}))\frac{8 }{(1+|y|^2)^2} \, d y +{\mathcal{O}}(\varepsilon^2)\\
				&=&       \sum_{i=1}^{k+l} \varrho(\xi_i)   +{\mathcal{O}}(\varepsilon). 
			\end{eqnarray*}
		\end{proof}
		{
			\begin{lemma}~\label{lem9}
				Let $i,h=1,\cdots, k+l $ and $j=1,\cdots,\ii(\xi_i)$.
				Then, as $\varepsilon\rightarrow 0$,
				\begin{eqnarray*}
					&&\varepsilon^2\int_{\Sigma}  e^{-\varphi_h} \chi_h e^{U_h} \partial_{(\xi_i)_j} PU_i \, dv_g\\
					&=&\frac {\delta_{ih}} 2\varrho(\xi_i)^2\partial_{(\xi_i)_j} H^g(\xi_i,\xi_i) + (1-\delta_{ih}) \varrho(\xi_i)\varrho(\xi_h) \partial_{(\xi_i)_j} G^g(\xi_h,\xi_i) +o(1),
				\end{eqnarray*}
				where $\delta_{ih}=1$ if $i=h$; 0 if $i\neq h.$
			\end{lemma}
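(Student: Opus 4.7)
The plan is to compute $J_{hi}^{(j)} := \varepsilon^2\int_\Sigma e^{-\varphi_h}\chi_h e^{U_h}\partial_{(\xi_i)_j}PU_i\,dv_g$ by direct substitution of the $C^0$-asymptotic expansion provided by Remark~\ref{rklemb3}. I write $\partial_{(\xi_i)_j}PU_i = \chi_i\partial_{(\xi_i)_j}(\chi_i U_i) + \varrho(\xi_i)\partial_{(\xi_i)_j}H^g(\cdot,\xi_i) + \mathcal{O}(\varepsilon^{\alpha_0})$ uniformly on $\Sigma$, and split the integral as $J_{hi}^{(j)} = A_{hi} + B_{hi} + E_{hi}$. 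Since Lemma~\ref{lemb2} gives $\varepsilon^2|\chi_h e^{-\varphi_h}e^{U_h}|_{L^1(\Sigma)} = \mathcal{O}(1)$, the error term satisfies $E_{hi} = \mathcal{O}(\varepsilon^{\alpha_0}) = o(1)$.

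For the regular-part contribution $B_{hi} = \varrho(\xi_i)\varepsilon^2\int \chi_h e^{-\varphi_h}e^{U_h}\partial_{(\xi_i)_j}H^g(\cdot,\xi_i)\,dv_g$, I use the continuity of $\partial_{(\xi_i)_j}H^g(\cdot,\xi_i)$ on $\Sigma$ (from Lemma~\ref{lem:re_green}) together with the concentration $\varepsilon^2\chi_h e^{-\varphi_h}e^{U_h}dv_g \rightharpoonup \varrho(\xi_h)\delta_{\xi_h}$ (the argument used in Lemma~\ref{lem8}) to get $B_{hi} = \varrho(\xi_i)\varrho(\xi_h)\partial_{(\xi_i)_j}H^g(x,\xi_i)\big|_{x=\xi_h} + o(1)$. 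When $h\neq i$, the separation $d_g(\xi_i,\xi_h)\geq\delta$ permits choosing $r_0 < \delta/4$, so $\chi_{\xi_i}(\xi_h)=0$ in a neighborhood, $\Gamma^g(\xi_h,\xi_i)$ vanishes there, and $\partial_{(\xi_i)_j}H^g(\xi_h,\xi_i)=\partial_{(\xi_i)_j}G^g(\xi_h,\xi_i)$, yielding the claimed off-diagonal term. When $h=i$, the symmetry $G^g(x,\xi)=G^g(\xi,x)$ implies $\partial_1 H^g(\xi,\xi)=\partial_2 H^g(\xi,\xi)$ at the diagonal, so that $\partial_{(\xi_i)_j}H^g(x,\xi_i)|_{x=\xi_i} = \tfrac12 \partial_{(\xi_i)_j}[H^g(\xi_i,\xi_i)]$, producing the factor $\tfrac12$.

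For the local part $A_{hi} = \varepsilon^2\int \chi_h e^{-\varphi_h}e^{U_h}\cdot\chi_i\partial_{(\xi_i)_j}(\chi_i U_i)\,dv_g$, when $h\neq i$ the supports of $\chi_h$ and $\chi_i$ are disjoint, hence $A_{hi}=0$. When $h=i$, I work in the isothermal chart $y=y_{\xi_i}(x)$ around $\xi_i$, where $\chi_i\equiv 1$ on $U_{r_0}(\xi_i)$ so that $\partial_{(\xi_i)_j}\chi_i=0$ there. Using $y_{\xi_i}(\xi_i)=0$ and the smooth $\xi$-dependence of the chart, a Taylor expansion gives
\[
\partial_{(\xi_i)_j}U_i = \frac{\partial_{(\xi_i)_j}\tau_i^2}{\tau_i^2}\cdot\frac{|y|^2-\tau_i^2\varepsilon^2}{\tau_i^2\varepsilon^2+|y|^2} + \frac{4\,y_j}{\tau_i^2\varepsilon^2+|y|^2} + \mathcal{O}\!\Big(\tfrac{|y|^2}{\tau_i^2\varepsilon^2+|y|^2}\Big).
\]
After rescaling $y=\tau_i\varepsilon z$, the first term produces $\tfrac{\partial_{(\xi_i)_j}\tau_i^2}{\tau_i^2}\int \chi^2\,\tfrac{8(|z|^2-1)}{(1+|z|^2)^3}\,dz$, whose limit integral over $\mathbb{R}^2$ (resp.\ $\mathbb{R}^2_+$) vanishes by a direct calculation; the second term is odd in $y_j$ on the symmetric limiting domain and contributes $o(1)$; the error rescales to $\mathcal{O}(\varepsilon)$. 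Hence $A_{ii}=o(1)$, and combining with $B_{ii}$ and $E_{ii}$ gives the stated formula.

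The main obstacle is the $A_{ii}$ analysis: identifying the correct leading-order expression for $\partial_{(\xi_i)_j}U_i$ in the local chart — where both $\tau_i(\xi)$ and $|y_{\xi_i}(x)|^2$ depend nontrivially on $\xi_i$ — and then securing the two delicate cancellations (vanishing of $\int \frac{|z|^2-1}{(1+|z|^2)^3}\,dz$ over $\mathbb{R}^2$ or $\mathbb{R}^2_+$, and the odd-symmetry vanishing of the $y_j$-term) that collapse $A_{ii}$ from apparently order-one contributions down to $o(1)$. A secondary technical point is the agreement of the two slot-derivatives of $H^g$ at the diagonal, inherited from the symmetry of $G^g$, which is what ultimately produces the $\tfrac12$ factor in the $h=i$ case.
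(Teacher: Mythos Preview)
Your argument is correct and follows essentially the same route as the paper: both substitute the expansion from Remark~\ref{rklemb3}, split into the singular local piece and the regular $H^g$-piece, kill the $y_j$-term by odd symmetry (the paper's Claim~\ref{claim:12}), and use concentration of $\varepsilon^2\chi_h e^{-\varphi_h}e^{U_h}$ at $\xi_h$ for the regular part. Two small points. First, your displayed remainder $\mathcal{O}\big(|y|^2/(\tau_i^2\varepsilon^2+|y|^2)\big)$ should read $\mathcal{O}\big(|y|^3/(\tau_i^2\varepsilon^2+|y|^2)\big)$, since $\partial_{(\xi_i)_j}|y_{\xi_i}(x)|^2 = -2y_j+\mathcal{O}(|y|^3)$; with $|y|^2$ the bound would only give $\mathcal{O}(1)$ after integration, whereas $|y|^3$ indeed rescales to $\mathcal{O}(\varepsilon)$ as you then assert. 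Second, you are slightly more explicit than the paper in two places: you isolate the $\tau_i$-dependence of $U_i$ and dispose of it via the identity $\int_{\mathbb{R}_i}\frac{|z|^2-1}{(1+|z|^2)^3}\,dz=0$ (the paper leaves this implicit), and you spell out that the factor $\tfrac12$ comes from the symmetry of $G^g$, whereas the paper simply writes $\tfrac12\varrho(\xi_i)\partial_{(\xi_i)_j}H^g(\xi_i,\xi_i)$ directly.
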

		}  
		\begin{proof} We decompose the integral into the following two parts: 
			$$\varepsilon^2 \int_{\Sigma}e^{-\varphi_h}\chi_h e^{U_h} \partial_{(\xi_i)_j}PU_i=\varepsilon^2 \left(\int_{\Sigma\cap U_{2r_0}(\xi_h)}  +\int_{\Sigma\setminus U_{2r_0}(\xi_h )}\right) e^{-\varphi_h} \chi_he^{U_h}\partial_{(\xi_i)_j}PU_i.$$  
			It is clear that 
			$
			\int_{\Sigma\setminus U_{2r_0}(\xi_h)} \varepsilon^2e^{-\varphi_h}\chi_h e^{U_h}\partial_{(\xi_i)_j}PU_i
			=0.
			$
			For    $h\neq i$,
			$U_{2r_0}(\xi_h)\cap U_{2r_0}(\xi_i)=\emptyset$ by the choice of $r_0$. Notice that as $|y|\rightarrow 0$	\begin{eqnarray*}
				\partial_{(\xi_i)_j} |y_{\xi}(x)|^2|_{x=y_{\xi_i}^{-1}(y)}&=& -2\lan (y_{\xi_i})_* \partial_{(\xi_i)_j} y^{-1}_{\xi}(y), y\ran=-2y_j+\mathcal{O}(|y|^3).
			\end{eqnarray*}
			\begin{claim}\label{claim:12}
				As $\varepsilon\rightarrow 0$,\begin{eqnarray*}
					\int_{ U_{2r_0}(\xi_i)}   \varepsilon^2e^{-\varphi_i}\chi_{i} e^{U_i} \frac{2\partial_{(\xi_i)_j} |y_{\xi_i}|^2  }{ \tau_i^2\varepsilon^2 +|y_{\xi_{i}}|^2} \, dv_g&=&{\mathcal{O}}(\varepsilon^2)+ 	\int_{ U_{r_0}(\xi_i)}   \varepsilon^2 e^{-\varphi_i}e^{U_i} \frac{4(-(y_{\xi_i})_j+\mathcal{O}(|y_{\xi_i}|^3)) }{ \tau_i^2\varepsilon^2 +|y_{\xi_{i}}|^2} \, dv_g\\
					&=& o(1). 
				\end{eqnarray*}
			\end{claim}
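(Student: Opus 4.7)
The plan is to push the integral through the isothermal chart $y_{\xi_i}$, so that the volume form collapses via $e^{-\varphi_i(x)}\,dv_g(x) = dy$ on $B^{\xi_i}_{2r_0}$, and then to split the integrand into pieces whose smallness comes from three different mechanisms: the cutoff annulus (smallness from bubble concentration away from $\xi_i$), exact symmetry cancellation against $y_j$ on $B^{\xi_i}_{r_0}$ or its half-disk version, and a rescaling argument for the cubic remainder produced by the chart expansion.

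For the first equality, I split $U_{2r_0}(\xi_i)=U_{r_0}(\xi_i)\cup A$ with $A:=U_{2r_0}(\xi_i)\setminus U_{r_0}(\xi_i)$. On $A$ one has $|y_{\xi_i}|\ge r_0$, hence $\varepsilon^2 e^{U_i} = 8\tau_i^2\varepsilon^2/(\tau_i^2\varepsilon^2+|y_{\xi_i}|^2)^2 = \mathcal{O}(\varepsilon^2)$ uniformly, while the factor $\partial_{(\xi_i)_j}|y_{\xi_i}|^2/(\tau_i^2\varepsilon^2+|y_{\xi_i}|^2)$ is bounded; integrating over the bounded set $A$ contributes $\mathcal{O}(\varepsilon^2)$, absorbed into the stated error. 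On $U_{r_0}(\xi_i)$ one has $\chi_i\equiv 1$, and inserting the expansion $\partial_{(\xi_i)_j}|y_{\xi_i}(x)|^2\bigl|_{x=y_{\xi_i}^{-1}(y)} = -2y_j + \mathcal{O}(|y|^3)$ recovers the middle expression (the coefficient $2$ in the original numerator combines with $-2$ to produce $-4(y_{\xi_i})_j$).

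For the $o(1)$ estimate, converting to Lebesgue measure on $B^{\xi_i}_{r_0}$, the leading term is
\[
-4\int_{B^{\xi_i}_{r_0}}\frac{8\tau_i^2\varepsilon^2\,y_j}{(\tau_i^2\varepsilon^2+|y|^2)^3}\,dy.
\]
When $\xi_i\in\intsigma$, $B^{\xi_i}_{r_0}$ is the full disk $\{|y|<r_0\}$ and the integrand is odd in $y_j$, so the integral vanishes. When $\xi_i\in\partial\Sigma$, the only admissible index is $j=1$ (since $\ii(\xi_i)=1$), and the half-disk $\{|y|<r_0,\,y_2\ge 0\}$ is symmetric under $y_1\mapsto -y_1$, so the integral vanishes again. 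The remainder is bounded by
\[
\int_{B^{\xi_i}_{r_0}}\frac{\tau_i^2\varepsilon^2\,|y|^3}{(\tau_i^2\varepsilon^2+|y|^2)^3}\,dy,
\]
and the rescaling $y=\tau_i\varepsilon s$ turns this into $\tau_i\varepsilon\int_{(\tau_i\varepsilon)^{-1}B^{\xi_i}_{r_0}}|s|^3/(1+|s|^2)^3\,ds = \mathcal{O}(\varepsilon)$, the integrand decaying like $|s|^{-3}$ at infinity and hence being integrable on $\mathbb{R}^2$ (or its upper half). Summing the three contributions gives $\mathcal{O}(\varepsilon^2)+0+\mathcal{O}(\varepsilon)=o(1)$.

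The main point requiring care is the boundary case: the constraint $j\in\{1,\dots,\ii(\xi_i)\}$ with $\ii(\xi_i)=1$ for $\xi_i\in\partial\Sigma$ forces $j$ to be the tangential direction, which is precisely what preserves the reflection symmetry of the half-disk and hence annihilates the leading integral; a normal-direction derivative would not cancel and the whole argument would fail. One should also verify that the chart-expansion $\partial_{(\xi_i)_j}|y_{\xi_i}|^2 = -2y_j + \mathcal{O}(|y|^3)$ genuinely has no $\mathcal{O}(|y|^2)$ term; this follows from the smooth dependence of $y_\xi$ on $\xi$, the normalization $y_\xi(\xi)=0$, and the assumed behaviour $\hat\varphi_\xi(0)=0$, $\nabla\hat\varphi_\xi(0)$ as specified in Section~\ref{prelim}.
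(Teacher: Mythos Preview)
Your proof is correct and follows essentially the same route as the paper: pass to the isothermal chart so that $e^{-\varphi_i}\,dv_g=dy$, peel off the annulus as an $\mathcal{O}(\varepsilon^2)$ error, insert the expansion $\partial_{(\xi_i)_j}|y_{\xi_i}|^2=-2y_j+\mathcal{O}(|y|^3)$, kill the leading $y_j$-term by oddness on $B^{\xi_i}_{r_0}$, and rescale the cubic remainder to obtain $\mathcal{O}(\varepsilon)$. Your write-up is in fact more explicit than the paper's, particularly in spelling out why the half-disk case still enjoys the needed reflection symmetry (because $j=1$ is the tangential direction when $\xi_i\in\partial\Sigma$), a point the paper leaves implicit.
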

			{\it Indeed, } as $ |y|\rightarrow 0, $
			\begin{eqnarray*}
				\int_{ U_{r_0}(\xi_i)\cap \Sigma}   \varepsilon^2 e^{-\varphi_i}\chi_{i}e^{U_i} \frac{2\partial_{(\xi_i)_j} |y_{\xi_i}|^2 }{ \tau_i^2\varepsilon^2 +|y_{\xi_{i}}|^2} \, dv_g  &=& \int_{B_{r_0}^{\xi_i}} \varepsilon^2 \frac{ 32\tau_i^2\varepsilon^2 (-y_j +\mathcal{O}(|y|^2)) }{( \tau_i^2\varepsilon^2 +|y|^2)^3} \, d y+\mathcal{O}(\varepsilon^2) \\
				&= &\int_{B_{r_0}^{\xi_i}} \varepsilon^2 \frac{- 32\tau_i^2 y_j+ \mathcal{O}(|y|^3)}{( \tau_i^2\varepsilon^2 +|y|^2)^3} \, d y ={\mathcal{O}}(\varepsilon). 
			\end{eqnarray*} 
			Claim~\ref{claim:12}  is concluded. 
			By Remark \ref{rklemb3}, 
			\begin{eqnarray*}
				&&	\int_{\Sigma} \varepsilon^2 \chi_{i}e^{U_i}\partial_{(\xi_i)_j}PU_i \, dv_g\\
				&=&  \int_{ \Sigma} \frac{8\tau_i^2\varepsilon^2\chi_{i}}{(\tau_i^2\varepsilon^2 +|y_{\xi_{i}}|^2)^2} \left(\chi_{i}  \frac{2\partial_{(\xi_i)_j} |y_{\xi}|^2}{ \tau_i^2\varepsilon^2 +|y_{\xi_i}|^2 } +\varrho(\xi_i) \partial_{(\xi_i)_j}  H^g_{\xi_i} +{\mathcal{O}}(\varepsilon^{\alpha_0})  \right) \, dv_g\\
				&=& \int_{U_{r_0}(\xi_i)} \varepsilon^2 \chi_{i}(x) e^{U_i(x)} \frac{2\partial_{(\xi_i)_j} |y_{\xi}(x)|^2}{ \tau_i^2\varepsilon^2+ |y_{\xi_i}(x)|^2} \, dv_g(x) \\
				&& +\frac 1 2  \varrho(\xi_i)\partial_{(\xi_i)_j }H^g(\xi_i,\xi_{i})\int_{U_{r_0}(\xi_i)} \frac{ 8\tau_i^2\varepsilon^2}{(\tau_i^2\varepsilon^2+ |y_{\xi_i}(x)|^2)^2} \, dv_g(x)+ {\mathcal{O}}(\varepsilon^{\alpha_0})\\
				&=&\frac 1 2 \varrho(\xi_i)^2\partial_{(\xi_i)_j} H^g(\xi_i,\xi_i) +o(1).
			\end{eqnarray*}
			For $i\neq h$, via Lemma \ref{lemb2}, we drive that 
			\begin{eqnarray*}
				&&	\int_{ U_{2r_0}(\xi_h)\cap \Sigma} \varepsilon^2 \chi_he^{U_h}\partial_{(\xi_i)_j}PU_i \, dv_g\\
				&=&  \int_{ U_{2r_0}(\xi_h)\cap \Sigma}\frac{8\tau_h^2\varepsilon^2\chi_h}{(\tau_h^2\varepsilon^2 +|y_{\xi_h}|^2)^2} \left(\chi_i\frac{  2\partial_{(\xi_i)_j}|y_{\xi_i}|^2}{\tau_i^2\varepsilon^2+|y_{\xi_i}|^2} +\varrho(\xi_i) \partial_{(\xi_i)_j}H^g_{\xi_i} +{\mathcal{O}}(\varepsilon^{\alpha_0})
				\right) \, dv_g\\
				&=&  \int_{ U_{2r_0}(\xi_h)\cap \Sigma}\chi_h(x) \frac{8\tau_h^2\varepsilon^2}{(\tau_h^2\varepsilon^2 +|y_{\xi_h}|^2)^2} \left( \varrho(\xi_i) \partial_{(\xi_i)_j} G^g(\cdot,\xi_i) +{\mathcal{O}}(\varepsilon^{\alpha_0})
				\right) \, dv_g\\
				&=& \varrho(\xi_i)\varrho(\xi_h) \partial_{(\xi_i)_j}G^g(\xi_h,\xi_i)+{\mathcal{O}}(\varepsilon^{\alpha_0}). 
			\end{eqnarray*}
			Combining all the estimates above, Lemma~\ref{lem9} is concluded. 
		\end{proof}
		\begin{lemma}~\label{lem10} Let $i=1,\cdots, k+l $ and $j=1,\cdots,\ii(\xi_i)$. As $\varepsilon\rightarrow 0$, 
			\begin{eqnarray*}
				&&	\varepsilon^2 \int_{\Sigma}Ve^{\sum_{h=1}^{k+l} P U_h}\partial_{(\xi_i)_j}PU_i \, dv_g =\frac  1 2 	\partial_{(\xi_i)_j}\cF^V_{k,l}(\xi) +o(1). 
			\end{eqnarray*}
		\end{lemma}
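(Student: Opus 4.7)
The strategy is to split the domain of integration into the pairwise disjoint local pieces $U_{r_0}(\xi_h)$ for $h=1,\ldots,k+l$ and the complement $R=\Sigma\setminus\bigcup_h U_{r_0}(\xi_h)$. On $R$, both $V e^{\sum_k PU_k}$ and $\partial_{(\xi_i)_j}PU_i$ are uniformly $O(1)$ by Lemma~\ref{lemb1} and Remark~\ref{rklemb3}, so the contribution from $R$ after multiplication by $\varepsilon^2$ is $O(\varepsilon^2)$ and is absorbed into the $o(1)$ remainder.

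On each local piece $U_{r_0}(\xi_h)$ the idea is to substitute Lemma~\ref{lemb1}'s expansion of $PU_k$ and then exploit the specific choice of $\tau_h$ from \eqref{tau} to rewrite
\[
V(x)\,e^{\sum_k PU_k(x)}\,e^{\varphi_h(x)} = e^{U_h(x)}\,\tilde B_h(x)\,\bigl(1+O(\varepsilon^{1+\alpha_0})\bigr),
\]
where $\tilde B_h$ is smooth near $\xi_h$. The normalization $\tilde B_h(\xi_h)=1$ is precisely what the defining formula of $\tau_h$ was chosen to produce, and the gradient $\nabla\tilde B_h(\xi_h)$ is expressible in terms of $\nabla\log V(\xi_h)$, $\nabla\varphi_h(\xi_h)$, $\nabla_x H^g(x,\xi_h)|_{x=\xi_h}$, and $\nabla_x G^g(x,\xi_k)|_{x=\xi_h}$ for $k\neq h$.

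For the off-diagonal neighborhoods $h\neq i$, the function $\partial_{(\xi_i)_j}PU_i$ is smooth on $U_{r_0}(\xi_h)$ since $\chi_i\equiv 0$ there; using $H^g(x,\xi_i)=G^g(x,\xi_i)$ off the support of $\Gamma^g_{\xi_i}$, Remark~\ref{rklemb3} gives $\partial_{(\xi_i)_j}PU_i(x)=\varrho(\xi_i)\,\partial_{(\xi_i)_j}G^g(x,\xi_i)+O(\varepsilon^{\alpha_0})$. The Dirac-type concentration $\varepsilon^2 e^{U_h}\rightharpoonup\varrho(\xi_h)\delta_{\xi_h}$ then delivers the contribution $\varrho(\xi_i)\varrho(\xi_h)\,\partial_{(\xi_i)_j}G^g(\xi_h,\xi_i)+o(1)$; the corrections coming from $\tilde B_h-1$ and from the Taylor tail of $\partial_{(\xi_i)_j}PU_i$ around $\xi_h$ are $o(1)$ by parity of the rescaled bubble.

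The diagonal case $h=i$ is the principal obstacle: here $\partial_{(\xi_i)_j}PU_i$ carries a singular term of the type $-2\partial_{(\xi_i)_j}|y_{\xi_i}|^2/(\tau_i^2\varepsilon^2+|y_{\xi_i}|^2)$ from differentiating the bubble $U_i$, so naive concentration fails. I isolate the constant part $\tilde B_i(\xi_i)=1$, which Lemma~\ref{lem9} handles directly at $h=i$, and Taylor-expand the remainder $\tilde B_i(x)-1=\nabla\tilde B_i(\xi_i)\cdot y+O(|y|^2)$ in the isothermal chart around $\xi_i$. Pairing the linear piece with the singular factor $y_j/(\tau_i^2\varepsilon^2+|y|^2)$ and rescaling by $y=\tau_i\varepsilon z$ reduces the non-trivial contribution to the pivotal integral $32\int z_k z_j(1+|z|^2)^{-3}\,dz=\tfrac{\varrho(\xi_i)}{2}\delta_{kj}$ (with a half-plane restriction when $\xi_i\in\partial\Sigma$), while the $O(|y|^2)$ remainder and sub-leading terms of $\partial_{(\xi_i)_j}PU_i$ contribute only $O(\varepsilon|\log\varepsilon|)$. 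Assembling the contributions from $R$, the off-diagonal neighborhoods, and the diagonal neighborhood, and invoking both the symmetry $G^g(x,y)=G^g(y,x)$ to identify first-slot and second-slot derivatives of $G^g$ at $(\xi_i,\xi_k)$ and the identity $\partial_{(\xi_i)_j}R^g(\xi_i)=2\,\partial_{x_j}H^g(x,\xi_i)|_{x=\xi_i}$, the three families of terms regroup exactly into the three summands in the definition of $\cF^V_{k,l}$, yielding $\tfrac{1}{2}\partial_{(\xi_i)_j}\cF^V_{k,l}(\xi)+o(1)$.
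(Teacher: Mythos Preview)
Your proposal is correct and follows essentially the same route as the paper: decompose $\Sigma$ into the complement and the local charts $U_{2r_0}(\xi_h)$, expand $Ve^{\sum PU_k}$ via Lemma~\ref{lemb1} and $\partial_{(\xi_i)_j}PU_i$ via Remark~\ref{rklemb3}, Taylor-expand the smooth amplitude (your $\tilde B_h$) to first order around $\xi_h$, and evaluate the rescaled bubble integrals. The only cosmetic difference is that you invoke Lemma~\ref{lem9} for the constant part $\tilde B_i(\xi_i)=1$ whereas the paper recomputes that contribution inline; one minor slip to fix is the value of your pivotal integral, since $32\int_{\R_i} z_kz_j(1+|z|^2)^{-3}\,dz=\varrho(\xi_i)\delta_{kj}$ rather than $\tfrac{\varrho(\xi_i)}{2}\delta_{kj}$.
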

		\begin{proof} First, we divide the integral into three parts to calculate:
			\begin{eqnarray*}
				&&	\varepsilon^2 \int_{\Sigma} Ve^{\sum_{h=1}^{k+l} P U_h} \partial_{(\xi_i)_j}PU_i \, dv_g \\
				&=&\varepsilon^2 \left(\int_{\Sigma\setminus \cup_{h=1}^{k+l} U_{2r_0}(\xi_h )} +\int_{U_{2r_0}(\xi_i)}+\int_{\cup_{l\neq i}  U_{2r_0}(\xi_l)} \right) Ve^{\sum_{h=1}^{k+l} P U_h}\partial_{(\xi_i)_j}PU_i \, dv_g\\
				&:=& I^1+I^2+I^3.
			\end{eqnarray*}
			
			The first term $I^1$ can be easily estimated by Remark \ref{rklemb3}.  As $\varepsilon\rightarrow 0$, we have 
			\begin{eqnarray*}
				I^1
				&=& {\mathcal{O}} \left(\varepsilon^2\int_{\Sigma \setminus \cup _{h=1}^{k+l} U_{2r_0}(\xi_h)} \left|\partial_{\xi_j}(\chi_{\xi}  U_{\tau,\xi}) +\varrho(\xi) \partial_{\xi_j} H^g_{\xi} +\mathcal{O}(\varepsilon^{\alpha_0}) \right|\, dv_g\right)\\
				& =& {\mathcal{O}}(\varepsilon^2). 
			\end{eqnarray*}
			We observe that for any $i=1,\cdots, k+l $ and $j=1,\cdots,\ii(\xi_i)$, as $|y|\rightarrow 0$,
			$\partial_{(\xi_i)_j} H^g(\xi_i,\xi_i)=2 \partial_{x_j} H^g(x,\xi_i) |_{x=\xi_i},$
			$ e^{\hat{\varphi}_i(y)}=\begin{cases}
				1+\mathcal{O}(|y|^2) & \xi_i\in \intsigma\\
				1- 2k_g(\xi_i) y_2+\mathcal{O}(|y|)& \xi_i \in \partial\Sigma
			\end{cases},
			$
			and 
			$
			\partial_{(\xi_i)_j} |y_{\xi}(x)|^2|_{x=y_{\xi_i}^{-1}(y)}=-2y_j+\mathcal{O}(|y|^3). 
			$
			Applying Lemma~\ref{lemb1} and Remark~\ref{rklemb3} with~\eqref{tau}, we derive that 
			\begin{eqnarray*}
			I^2
				&=&\int_{U_{2r_0}(\xi_i)} \left(\frac{ \varepsilon^2 Ve^{\varrho(\xi_i) H^g_{\xi_i}+\sum_{l\neq i} \varrho(\xi_l)G^g(\cdot,\xi_l) +{\mathcal{O}}(\varepsilon^{1+\alpha_0})}  }{(\tau_i^2\varepsilon^2 +|y_{\xi_i}|^2)^2}\right)\\
				&& 
				\left(-\frac{2\chi_i \partial_{(\xi_i)_j} |y_{\xi_i}|^2 } {(\tau_i^2\varepsilon^2+|y_{\xi_i}|^2)} +\varrho(\xi_i)\partial_{(\xi_i)_j} H^g_{\xi_i}+ {\mathcal{O}}(\varepsilon^{\alpha_0})\right) \, dv_g \\
				&=& 	\int_{ B_{r_0}^{\xi_i}} \frac{8\tau_i^2\varepsilon^2e^{\hat{\varphi}_{\xi_i}(y)} }{(\tau_i^2\varepsilon^2 +|y|^2)^2}  \exp\left\{\varrho(\xi_i)H^g(y_{\xi_i}^{-1}(y),\xi_i)+\sum_{h\neq i}\varrho(\xi_h) G^g(y_{\xi_i}^{-1}(y),\xi_h)\right. \\
				&&\left. + \log V(y_{\xi_i}^{-1}(y))-\log(8\tau_i^2) +{\mathcal{O}}(\varepsilon ^{1+\alpha_0} )\right\}\left(\left. \frac{-2\partial_{(\xi_i)_j} |y_{\xi_i}(x)|^2 }{(\tau_i^2\varepsilon^2+|y_{\xi_i
					}(x)|^2)} \right|_{x=y_{\xi_i}^{-1}(y)}\right. \\
				&&\left.+ \frac 1 2\varrho(\xi_i) \partial_{(\xi_i)_j} H^g(\xi_i,\xi_i)+ {\mathcal{O}}(|y|+\varepsilon^{\alpha_0})\right) \, d y+{\mathcal{O}}(\varepsilon^2)\\
				&=&\int_{\frac 1 {\tau_i\varepsilon} B^{\xi_i}_{r_0}} 
				\frac 8 {(1 +|y|^2)^2}(1+ \nabla \hat{\varphi}_i(0)\cdot y+\mathcal{O}(\varepsilon^2|y|)^2)\left( 1+\frac 1 2 \tau_i\varepsilon\sum_{s=1}^2\varrho(\xi_i)\partial_{(\xi_i)_s}H^g(\xi_i,\xi_i)y_s\right. \\
				&&\left. + \tau_i\varepsilon\sum_{h\neq i} \varrho(\xi_h) \sum_{s=1}^2 \partial_{(\xi_i)_s}G^g(\xi_i,\xi_h) y_s +\tau_i\varepsilon\sum_{s=1}^2 \partial_{(\xi_i)_s}\log V(\xi_i) y_s +{\mathcal{O}}( \tau_i^2\varepsilon^2 |y|^2+\varepsilon^{1+\alpha_0} )   \right)   \\
				&& \cdot \left( \frac 1 {\tau_i \varepsilon}\frac{ 4y_j}{1+|y|^2}+\frac {\varrho(\xi_i)} 2 \partial_{(\xi_i)_j} H^g(\xi_i,\xi_i) +{\mathcal{O}}(\varepsilon|y|+\varepsilon^{\alpha_0})\right) \, d y +{\mathcal{O}}(\varepsilon^2)\\
				&=&\frac 1 2 \varrho(\xi_i)^2  \partial_{(\xi_i)_j} H^g(\xi_i,\xi_i) + \frac 1 2 \varrho(\xi_i)^2 \partial_{(\xi_i)_j} H^g(\xi_i,\xi_i)\\
				&&+ \sum_{h\neq i}\varrho(\xi_i)\varrho(\xi_h)\partial_{(\xi_i)_j} G^g(\xi_i,\xi_h)+\varrho(\xi_i) \partial_{(\xi_i)_j} \log V(\xi_i)+o(1)\\
				&=&  \varrho(\xi_i)^2 \partial_{(\xi_i)_j} H^g(\xi_i,\xi_i)+ \sum_{h\neq i}\varrho(\xi_i)\varrho(\xi_h)\partial_{(\xi_i)_j} G^g(\xi_i,\xi_h)+\varrho(\xi_i) \partial_{(\xi_i)_j} \log V(\xi_i)+o(1),
			\end{eqnarray*}
			where we applied 
			$ \int_{\mathbb{R}^2 }\frac{1}{(1+|y|^2)^2} \, d y =\pi = 2\int_{\mathbb{R}^2} \frac{|y|^2}{(1+|y|^2)^3 } \, d y.$
			
			For any $h\neq i$,  analogue to the proof for $h=i$, we can obtain 	\begin{eqnarray*}
				\int_{U_{2r_0}(\xi_h)} \varepsilon^2 Ve^{\sum_{l=1}^{k+l} P U_l}\partial_{(\xi_i)_j}PU_i \, dv_g=\varrho(\xi_i)\varrho(\xi_h)\partial_{(\xi_i)_j} G^g(\xi_h,\xi_i)+o(1).
			\end{eqnarray*}
			Combining the estimates above, 
			\begin{eqnarray*}
				&&	\varepsilon^2 \int_{\Sigma} Ve^{\sum_{h=1}^{k+l} P U_h}\partial_{(\xi_i)_j}PU_i \, dv_g= \partial_{(\xi_i)_j}\cF^V_{k,l}(\xi)+o(1).
			\end{eqnarray*}
		\end{proof}
		\begin{lemma}~\label{lem13}
			Let  $i, h=1, \cdots, k+l.$. Then as $\varepsilon \rightarrow 0$,
			$$
			\left\|\varepsilon^{2}\chi_h {e}^{U_{h}}\left( \partial_{(\xi_i)_j}PU_i-\chi_{i}\partial_{(\xi_i)_j} U_i\right)\right\|_{p}\leq O\left(\varepsilon^{\frac{2(1-p)} p}\right).
			$$
		\end{lemma}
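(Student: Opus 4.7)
The plan is to invoke the $C(\Sigma)$-expansion from Remark~\ref{rklemb3} (as corrected to $\partial_{(\xi_i)_j}PU_i = \partial_{(\xi_i)_j}(\chi_i U_i) + \varrho(\xi_i)\partial_{(\xi_i)_j}H^g_{\xi_i} + \mathcal{O}(\varepsilon^{\alpha_0})$) and the product rule to write
\[
\partial_{(\xi_i)_j}PU_i - \chi_i\,\partial_{(\xi_i)_j}U_i \;=\; (\partial_{(\xi_i)_j}\chi_i)\,U_i + \varrho(\xi_i)\,\partial_{(\xi_i)_j}H^g_{\xi_i} + \mathcal{O}(\varepsilon^{\alpha_0}),
\]
and then estimate the three resulting contributions when multiplied by $\varepsilon^2\chi_h e^{U_h}$ in $L^p(\Sigma)$.

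For the first term, I would use the fact that $\partial_{(\xi_i)_j}\chi_i$ is supported in the annulus $A_{r_0}(\xi_i)=U_{2r_0}(\xi_i)\setminus U_{r_0}(\xi_i)$, where $|y_{\xi_i}|\geq r_0$ forces $U_i = \mathcal{O}(1)$ uniformly for $\tau_i$ bounded away from zero. If $h\neq i$, disjointness $U_{2r_0}(\xi_i)\cap U_{2r_0}(\xi_h)=\emptyset$ (guaranteed by the choice of $r_0$ on $M_\delta$) kills the product; if $h=i$, then $e^{U_i}=\mathcal{O}(1)$ on this annulus, so the contribution is $\mathcal{O}(\varepsilon^2)$ in $L^p$, which is easily absorbed into $\mathcal{O}(\varepsilon^{2(1-p)/p})$.

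For the second term, I would argue that $\|\partial_{(\xi_i)_j}H^g_{\xi_i}\|_{C(\Sigma)}$ is uniformly bounded on $M_\delta$: differentiating the defining problem~\eqref{eqR} in $\xi_i$ gives an equation with right-hand side and Neumann data in $L^q$ uniformly in $\xi\in M_\delta$ (the singular parts $\log|y_{\xi_i}|$ and $\partial_{\nu_g}\log|y_{\xi_i}|$ produce lower-order singularities after differentiation, controllable since they are multiplied by $\chi_i$ or its derivatives), and Lemma~\ref{lem:inh_LP} combined with Sobolev embedding yields the desired bound. This term then contributes $\mathcal{O}(\|\varepsilon^2\chi_h e^{U_h}\|_{L^p(\Sigma)})=\mathcal{O}(\varepsilon^{2(1-p)/p})$ by Lemma~\ref{lemb2}. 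The third, error term contributes $\mathcal{O}(\varepsilon^{\alpha_0+2(1-p)/p})$ by the same lemma.

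The main obstacle I anticipate is the uniform-in-$\xi$ control of $\partial_{(\xi_i)_j}H^g_{\xi_i}$, since differentiating through the cutoff $\chi_i$ (which itself depends on $\xi_i$ through $y_{\xi_i}$) introduces new boundary and interior source terms. All such terms, however, are supported on $U_{2r_0}(\xi_i)$ with integrable singularities no worse than $|y_{\xi_i}|^{-1}$, so they lie in $L^q$ for some $q>1$ uniformly in $\xi\in M_\delta$, and Lemma~\ref{lem:inh_LP} together with the Sobolev embedding $W^{2,q}\hookrightarrow C^{0}$ closes the argument. Summing the three contributions gives the claimed $\mathcal{O}(\varepsilon^{2(1-p)/p})$.
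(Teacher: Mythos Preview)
Your proposal is correct and follows exactly the same route as the paper, only with more detail. The paper's proof is two lines: it invokes Remark~\ref{rklemb3} to assert directly that $\partial_{(\xi_i)_j}PU_i-\chi_i\partial_{(\xi_i)_j}U_i=\mathcal{O}(1)$ in $C(\Sigma)$, and then applies Lemma~\ref{lemb2} to pull out the $\varepsilon^{2(1-p)/p}$ from $\|\varepsilon^2\chi_h e^{U_h}\|_p$. Your three-term decomposition and the separate treatment of the annular support of $(\partial_{(\xi_i)_j}\chi_i)U_i$ and the uniform $C^0$ bound on $\partial_{(\xi_i)_j}H^g_{\xi_i}$ are precisely what is hidden inside the paper's single ``$\mathcal{O}(1)$'', so nothing is genuinely different.
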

		\begin{proof}
			By Remark~\ref{rklemb3}, $ \partial_{(\xi_i)_j}PU_i-\chi_{i}\partial_{(\xi_i)_j} U_i=\mathcal{O}(1)$. Then, applying Lemma~\ref{lemb2},
			$$
			\left\|\varepsilon^{2} \chi_h {e}^{U_{h}}\left( \partial_{(\xi_i)_j}PU_i-\chi_i \partial_{(\xi_i)_j} U_i \right)\right\|_p\leq O\left(\left\|\varepsilon^{2}\chi_h {e}^{U_{h}}\right\|_{p}\right)=O\left(\varepsilon^{\frac{2(1-p)} p}\right).
			$$
		\end{proof}
		\begin{lemma}~\label{lem11}
			Given $\delta>0$ sufficiently small, let $\xi=(\xi_1,\cdots,\xi_{k+l})\in M_{\delta}$. 
			Let $ \phi\in K_{\xi}^{\perp} \text{ and }\|\phi\|\leq {\mathcal{O}}(\varepsilon^{\frac{2-p}{p}}|\log \varepsilon|)$, where $p\in (1,\frac 6 5)$. Then for $i=1,\cdots, k+l $ and $j=1,\cdots, \ii(\xi_i)$, as $ \varepsilon\rightarrow 0$, 
			\begin{equation}
				\left	\lan \sum_{h=1}^{k+l} PU_h +\phi -i^*(\varepsilon^2 Ve^{\sum_{h=1}^{k+l} PU_h+\phi}),\partial_{(\xi_i)_j}PU_i\right\ran = -\frac 1 2\frac{\partial  \cF^V_{k,l}}{\partial(\xi_i)_j } (\xi)+o(1), 
			\end{equation}
			which is uniformly convergent for $\xi$ in  $M_{\delta}$.
		\end{lemma}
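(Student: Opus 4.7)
The plan is to expand the inner product into three pieces by bilinearity, handle each via integration by parts and the auxiliary lemmas of this appendix, and reserve the orthogonality $\phi \in K_\xi^\perp$ for the most delicate linear-in-$\phi$ contribution. Concretely, write the left-hand side as
\begin{equation*}
\left\langle \sum_h PU_h, \partial_{(\xi_i)_j} PU_i \right\rangle + \left\langle \phi, \partial_{(\xi_i)_j} PU_i \right\rangle - \int_\Sigma \varepsilon^2 V e^{\sum_h PU_h + \phi} \, \partial_{(\xi_i)_j} PU_i \, dv_g,
\end{equation*}
using that $\partial_{(\xi_i)_j} PU_i \in \oH$ to discard the mean subtraction in $i^*$. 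Integration by parts against the defining equation~\eqref{eq:projection} for $PU_h$ (the Neumann boundary term vanishes and the average $\overline{\varepsilon^2 \chi_h e^{-\varphi_h} e^{U_h}}$ integrates to zero against $\partial_{(\xi_i)_j} PU_i$) converts the first piece into $\sum_h \varepsilon^2 \int_\Sigma \chi_h e^{-\varphi_h} e^{U_h} \partial_{(\xi_i)_j} PU_i \, dv_g$, whose asymptotic is supplied by Lemma~\ref{lem9}.

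For the nonlinear integral I would decompose $e^\phi = 1 + \phi + (e^\phi - 1 - \phi)$ into three sub-integrals $I_1, I_2, I_3$. By Lemma~\ref{lem10}, $I_1 = \frac{1}{2} \partial_{(\xi_i)_j} \cF^V_{k,l}(\xi) + o(1)$. The quadratic piece $I_3$ is estimated by H\"{o}lder's inequality combined with Lemma~\ref{lem6} and the $L^{p/(p-1)}$-bound on $\partial_{(\xi_i)_j} PU_i$ (of order $\varepsilon^{(p-2)/p}$, obtained from the expansion in the Remark following Lemma~\ref{lemb3} together with a direct bubble computation); the resulting $\varepsilon$-exponent $\frac{2-2pr}{pr} + \frac{2-p}{p}$ is positive for $p \in (1, 6/5)$ and $r > 1$ sufficiently close to $1$, so $I_3 = o(1)$.

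The crux is the combined treatment of $I_2$ with $\langle \phi, \partial_{(\xi_i)_j} PU_i \rangle$. Using Lemma~\ref{lem5} to replace $\varepsilon^2 V e^{\sum_h PU_h}$ by $\sum_h \varepsilon^2 \chi_h e^{-\varphi_h} e^{U_h}$ in $L^p$, invoking the Remark after Lemma~\ref{lemb3} together with the local identity $\partial_{(\xi_i)_j} U_i = \Psi^j_i + \partial_{(\xi_i)_j} \tau_i \cdot \Psi^0_i + O(|y_{\xi_i}|)$ on the support of $\chi_i$, I reduce the leading contributions to integrals of the form $\int \varepsilon^2 \chi_i e^{-\varphi_i} e^{U_i} \Psi^j_i \phi \, dv_g$ and $\int \varepsilon^2 \chi_i e^{-\varphi_i} e^{U_i} \Psi^0_i \phi \, dv_g$. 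By the defining equation~\eqref{ppsi} for $P\Psi^j_i$, these equal $\langle P\Psi^j_i, \phi \rangle$ and $\langle P\Psi^0_i, \phi \rangle$ respectively; the former vanishes exactly because $\phi \in K_\xi^\perp$, while the latter is bounded by $\|P\Psi^0_i\| \cdot \|\phi\| = O(1) \cdot O(\varepsilon^{(2-p)/p}|\log\varepsilon|) = o(1)$ via Lemma~\ref{lem4} and the hypothesis on $\phi$. Remaining error terms coming from $\partial_{(\xi_i)_j} \chi_i$, $\partial_{(\xi_i)_j} \varphi_i$, the $O(|y|)$ correction, and the $O(\varepsilon^{\alpha_0})$ remainder are dispatched by Lemma~\ref{lem13} and H\"{o}lder estimates against $\|\phi\|$.

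The main obstacle is precisely this linear-in-$\phi$ step: since $\|\partial_{(\xi_i)_j} PU_i\|$ blows up like $\varepsilon^{-1}$, naive Cauchy--Schwarz cannot produce $o(1)$, so one must carefully isolate the leading bubble part $\Psi^j_i$ and exploit the orthogonality $\langle \phi, P\Psi^j_i \rangle = 0$. Once this is achieved, combining the contributions of Lemmas~\ref{lem9} and~\ref{lem10} with the $o(1)$ error bounds above yields the claimed $-\frac{1}{2} \partial_{(\xi_i)_j} \cF^V_{k,l}(\xi) + o(1)$, with uniform convergence on $M_\delta$ inherited from the uniformity of each component lemma.
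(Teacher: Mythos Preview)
Your proposal is correct and follows essentially the same route as the paper's proof: split the inner product into the $\sum_h\langle PU_h,\partial PU_i\rangle$ piece (converted via \eqref{eq:projection} and handled by Lemma~\ref{lem9}), the nonlinear integral decomposed as $1+\phi+(e^\phi-1-\phi)$ (handled by Lemmas~\ref{lem10}, \ref{lem5}, \ref{lem6}, \ref{lem13}), and the linear-in-$\phi$ contribution where the key step is reducing to $\langle P\Psi^j_i,\phi\rangle=0$ via the local identity for $\partial_{(\xi_i)_j}U_i$. Your explicit isolation of the $\Psi^0_i$ piece and its bound by $\|P\Psi^0_i\|\cdot\|\phi\|=O(1)\cdot o(1)$ is a slightly cleaner bookkeeping than the paper's, which absorbs this into a combined remainder estimate, but the underlying mechanism is identical.
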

		\begin{proof}
			For $y= y_{\xi_i}(x)$, 
			$	\partial_{(\xi_i)_j} |y_{\xi_i}(x)|^2 = -2y_j  + \mathcal{O}(|y|^3). 
			$
			Since $\|\phi\|=o(1)$ and $\lan P\Psi^i_j,\phi\ran=0$, we have  
			\begin{eqnarray}
				\label{eq:phi_in_PPsi}
				&&\quad\left\lan \phi, \partial_{(\xi_i)_j}PU_i\right\ran = \int_{\Sigma} \varepsilon^2e^{-\varphi_i} e^{U_i}\phi\partial_{(\xi_i)_j} \chi_i  \, dv_g + \int_{\Sigma} \varepsilon^2e^{-\varphi_i}  \chi_i  e^{U_i}\phi \partial_{(\xi_i)_j} U_i \, dv_g \\
				&&+\int_{\Sigma} \varepsilon^2 \chi_i e^{U_i}\phi\partial_{(\xi_i)_j} e^{-\varphi_i} \, dv_g\nonumber \\
				&=&\int_{\Sigma} \varepsilon^2\chi_ie^{-\varphi_i} e^{U_i}\phi\Psi^j_i \, dv_g + \mathcal{O}\left( 	\int_{B_{2r_0}^{\xi_i}} \frac{\tau_i^2\varepsilon^2\chi\left(\frac{|y|}{r_0}\right) ( \tau_i^2\varepsilon^2|y|^2+|y|^4+|y|^3) }{(\tau_i^2\varepsilon^2+|y|^2)^3}|\phi| \, d y \right)\nonumber\\
				&=& \lan\phi, P\Psi^j_i\ran +o(1)=o(1),\nonumber
			\end{eqnarray}
			for any $i=1,\cdots,m$ and $j=1,\cdots,\ii(\xi_i).$
			Considering that $\int_{\Sigma} \partial_{(\xi_i)_j}PU_i \, dv_g=0$ and $\chi_i \cdot\chi_h \equiv 0$ for any $i\neq h,$ we have 
			\begin{eqnarray*}
				&&\left\lan \sum_{h=1}^{k+l}  PU_h +\phi -i^*(\varepsilon^2 Ve^{\sum_{h=1}^{k+l} PU_h+\phi}),\partial_{(\xi_i)_j}PU_i\right\ran \\
				&=& \sum_{h=1}^{k+l} \lan PU_h, \partial_{(\xi_i)_j}PU_i\ran+ \lan\phi, \partial_{(\xi_i)_j}PU_i\ran-\varepsilon^2\int_{\Sigma} Ve^{\sum_{h=1}^{k+l} PU_h +\phi }\partial_{(\xi_i)_j}PU_i \, dv_g
				\\
				&\stackrel{\eqref{eq:phi_in_PPsi}}{=}&\sum_{h=1}^{k+l}\int_{\Sigma} \varepsilon^2   \chi_h e^{-\varphi_h} e^{U_h}\partial_{(\xi_i)_j}PU_i \, dv_g - \varepsilon^2 \int_{\Sigma} V
				e^{\sum_{h=1}^{k+l}  PU_h} (e^{\phi} -\phi-1)\partial_{(\xi_i)_j}PU_i \, dv_g\\ 
				&& -\varepsilon^2 \int_{\Sigma}\left(Ve^{\sum_{h=1}^{k+l} PU_h}-\sum_{h=1}^{k+l} \chi_h e^{U_h} \right) \phi \partial_{(\xi_i)_j} PU_i
				\, dv_g\\
				&&+\sum_{h\neq i} \varepsilon^2\int_{\Sigma} \chi_h e^{ U_h} \phi \chi_i(\partial_{(\xi_i)_j} U_i -\chi_i \partial_{(\xi_i)_j} U_i) \, dv_g	\\
				&& -\varepsilon^2 \int_{\Sigma} Ve^{\sum_{h=1}^{k+l} PU_h}\partial_{(\xi_i)_j}PU_i \, dv_g +o(1).
			\end{eqnarray*}
			By Lemma~\ref{lem4} and Lemma~\ref{lem6}, we have 
			\begin{eqnarray*}
				&&\left|  \varepsilon^2\int_{\Sigma}   	Ve^{\sum_{h=1}^{k+l}P U_h} (e^{\phi} -\phi-1)\partial_{(\xi_i)_j}PU_i  \, dv_g  \right|\leq  |\varepsilon^2 h	e^{\sum_{h=1}^{k+l} PU_h} (e^{\phi} -\phi-1)   |_{L^p(\Sigma)}  
				|\partial_{(\xi_i)_j}PU_i|_{L^q(\Sigma)} \\
				&\leq & c \|\phi\|^2 \varepsilon ^{\frac{2-2pr}{pr}} |\partial_{(\xi_i)_j}PU_i|_{{L}^q(\Sigma)} \leq  c  \|\phi\|^2 \varepsilon ^{\frac{2-3pr}{pr}},
			\end{eqnarray*}
			where 	 $q\geq 1$ with $\frac{1}{p}+\frac{1}{q}=1$ and for any $r>1$. 
			By Lemma~\ref{lem13}, 
			\begin{eqnarray*}
				\left|\varepsilon^2	\int_{\Sigma}\sum_{h=1}^{k+l} \chi_h e^{ U_h} \phi (\chi_{i} \partial_{(\xi_i)_j} U_i-\partial_{(\xi_i)_j}PU_i) \, dv_g  \right|&\leq& c\sum_{h=1}^{k+l}\|\phi\| | \varepsilon^2\chi_h  e^{ U_h} (\chi_{i} \partial_{(\xi_i)_j}U_i-\partial_{(\xi_i)_j}PU_i)|_{L^p(\Sigma)}  \\
				&&\leq  c \|\phi\| \varepsilon^{\frac{2(1-p)}{p }}.  
			\end{eqnarray*}
			By Lemma~\ref{lem5}, 
			\begin{eqnarray*}
				\left| \varepsilon^2 \int_{\Sigma} ( \sum_{h=1}^{k+l}  \chi_he^{U_h} -Ve^{\sum_{h=1}^{k+l}  PU_h})  \phi\partial_{(\xi_i)_j}PU_i \right|
				&\leq& c\varepsilon^2\|\phi\| \left|  \sum_{h=1}^{k+l}  \chi_he^{U_h} - Ve^{\sum_{h=1}^{k+l}  PU_h}\right|_{L^p(\Sigma)}  \|\partial_{(\xi_i)_j}PU_i\|\\
				&\leq & c \|\phi\| \varepsilon^{\frac{2-p}{p}-1} = c\|\phi\| \varepsilon^{\frac{2(1-p)}{p }}. 
			\end{eqnarray*}
			In view of $\partial_{(\xi_i)_j}|y_{\xi_i}(x)|^2 = -2y_{\xi_i}(x)_j+ \mathcal{O}(|y_{\xi_i}(x)|^3)$ as $x\rightarrow \xi_i$, as $\varepsilon\rightarrow 0$
			\begin{eqnarray*}
				&& \varepsilon^2\int_{\Sigma}\chi_i  e^{ U_i} \phi\chi_{i}  \partial_{(\xi_i)_j} U_i \, dv_g\\
				&=&\varepsilon^2 \int_{\Sigma} e^{ U_i} \phi\chi_{i} e^{-\varphi_i}(1+\mathcal{O}(|y_{\xi_i}|^2)) \left(P\Psi^j_i + \mathcal{O}\left( \frac{|y_{\xi_i}|^3 }{\tau_i^2\varepsilon^2+|y_{\xi_i}|^2}\right)\right) \, dv_g+\mathcal{O}(\varepsilon^2)\\
				&=& \lan\phi, P\Psi^i_j\ran + \mathcal{O}(\varepsilon)=o(1).
			\end{eqnarray*}
			On the other hand, applying Lemma \ref{lem9} and Lemma \ref{lem10}, we deduce that 
			\begin{eqnarray*}
				&&\sum_{h=1}^{k+l}	\varepsilon^2 \int_{\Sigma}\chi_h  e^{-\varphi_h}e^{ U_h}\partial_{(\xi_i)_j}PU_i -\varepsilon^2 \int_{\Sigma} Ve^{\sum_{h=1}^{k+l} PU_h}\partial_{(\xi_i)_j}PU_i\\
				& =& \sum_{h=1}^{k+l}	\varepsilon^2 \int_{\Sigma}\chi_h e^{ U_h}\partial_{(\xi_i)_j}PU_i -\varepsilon^2 \int_{\Sigma} Ve^{\sum_{h=1}^{k+l} PU_h}\partial_{(\xi_i)_j}PU_i+o(1)=-\frac 1 2  \partial_{(\xi_i)_j}\cF^V_{k,l}(\xi) +o(1). 
			\end{eqnarray*}
			For any  $p\in (1,\frac 6 5 )$, take $r>1$ close to 1 enough such that $\frac{4-2p}{p}+ \frac{2-3pr}{pr}>0$. 
			Hence, 
			we have  as $\varepsilon\rightarrow 0$
			\[ \left\lan \sum_{h=1}^{k+l}  PU_h +\phi -i^*(\varepsilon^2 Ve^{\sum_{h=1}^{k+l} PU_h+\phi}),\partial_{(\xi_i)_j}PU_i\right\ran = -\frac{ 1}{2} \partial_{(\xi_i)_j }\cF^V_{k,l}(\xi) +o(1). \]  
		\end{proof}

		\section{The partial invertibility of the linearized operator}\label{linAp}
		\begin{altproof}	{  Lemma~\ref{t1} }
		Assume the conclusion in Lemma~\ref{t1}  does not hold. Then there exists $\xi\in {M}_{\delta}\subset \Xi^\prime_{k,l}$ for some small $\delta>0$,  a sequence $\varepsilon_n\rightarrow 0$ and $\phi_n\in K_{\xi}^{\perp}$ with $\|\phi_n\|=1$ and $ \|L^{\varepsilon_n}_{\xi}(\phi)\|=o(\frac{1}{|\log \varepsilon_n|})$.
		To simplify the notations, we use $\varepsilon$ instead of $\varepsilon_n$ and $\phi$ instead of $\phi_n$.
		\begin{equation}\label{wpsi}
			\phi -i^*(\varepsilon^2 Ve^{\sum_{i=1}^{k+l} PU_i}\phi) =  \psi+w,
		\end{equation} 
		where $\psi\in K_{\xi}^{\perp}$ and $w\in K_{\xi}$. Then $\|\psi\|=o(\frac{1}{|\log \varepsilon|})\rightarrow 0 . $
		It is equivalent that $\phi$ solves the following problem in the weak sense, 
		\[	\left\{\begin{aligned}
			(-\Delta_g+\beta)\phi=& \varepsilon^{2} Ve^{\sum_{i=1}^{k+l} PU_i} \phi -\overline{ \varepsilon^2 Ve^{\sum_{i=1}^{k+l} PU_i}\phi} +(- \Delta_g+\beta)(\psi+w), &\text{ in } \intsigma,\\
			\partial_{\nu_g} \phi=& 0,&\text{ on } \partial \Sigma. 
		\end{aligned}\right.
		\] 
		{\it  {Step 1.  $ \|w\|=o(1) $.}} \par 
		Given that $w\in K_{\xi}$, we have $w= \sum_{i=1}^{k+l}\sum_{j=1}^{\ii(\xi_i)} c_{ij}^{\varepsilon} P\Psi^j_i$. 
		Consider the inner product of equation \eqref{wpsi} with $P\Psi^{j'}_{i'}$, leading to the following equation:
		\begin{eqnarray*}
			&&	\langle \phi,P\Psi^{j'}_{i'}\rangle-\int_{\Sigma}  P\Psi^{j'}_{i'}\left( \varepsilon^2 Ve^{\sum_{i=1}^{k+l} PU_i} \phi -\frac 1 {|\Sigma|_g}\int_{\Sigma} \varepsilon^2 Ve^{\sum_{i=1}^{k+l} PU_i}  \phi \, dv_g \right) \, dv_g \\
			&=& \la\psi, P\Psi^{j'}_{i'}\ra + \la w,P\Psi^{j'}_{i'}\ra.  
		\end{eqnarray*}
		Since $P\Psi^{j'}_{i'}\in \oH$ and $\phi\in K_{\xi}^{\perp}$, we have 
		$\int_{\Sigma} P\Psi^{j'}_{i'} \, dv_g
		=0\text{ and } \la\psi, P\Psi^{j'}_{i'}\ra=\la\phi, P\Psi^{j'}_{i'}\ra=0. $
		It follows 
		\begin{equation}~\label{pp}
			-\varepsilon^2 \int_{\Sigma} Ve^{\sum_{i=1}^{k+l} PU_i}\phi P\Psi^{j'}_{i'} \, dv_g =\sum_{i=1}^{k+l} \sum_{j=1}^{\ii(\xi_i)} c_{ij}^{\varepsilon}  \la P\Psi^j_i, P\Psi^{j'}_{i'}\ra .
		\end{equation}
		Applying Lemma \ref{lem4}, the right-hand side of the equation \eqref{pp} equals   $$\frac{ 8\varrho(\xi_{i'})D_1}{\pi \tau^2_{i'}\varepsilon^2} c^{\varepsilon}_{i'j'}+\mathcal{O}(\varepsilon^{\alpha_0-1}\sum_{i=1}^{k+l}\sum_{j=1}^{\ii(\xi_i)} |c_{ij}^{\varepsilon}|).$$ The left-hand side of equation \eqref{pp} can be expanded as follows:
		\begin{eqnarray*}
			&\int_{\Sigma}\varepsilon^2 \left (\sum_{i=1}^{k+l} \chi_i  e^{U_i} -Ve^{\sum_{i=1}^{k+l} PU_i}\right) P\Psi^{j'}_{i'} \phi \, dv_g 
			-\sum_{i=1}^{k+l} \int_{\Sigma} \varepsilon^2 \chi_i  e^{U_i} (P\Psi^{j'}_{i'}-\chi_{i'} \Psi^{j'}_{i'})\phi \, dv_g\\
			&-\varepsilon^2 \int_{\Sigma}\chi_{i'}^2 (-e^{-\varphi_{i'}}+1)e^{U_{i'}}\Psi^{j'}_{i'}\phi \, dv_g-\varepsilon^2 \int_{\Sigma}\chi_{i'}^2 e^{-\varphi_{i'}}e^{U_{i'}}\Psi^{j'}_{i'}\phi \, dv_g.
		\end{eqnarray*}
		Since $\|\phi\|=1$ and $\phi\in K_{\xi}^{\perp}$,  
		$\int_{\Sigma}  	\varepsilon^2 \chi_{i'}^2e^{-\varphi_{i'}} e^{U_{i'}} \Psi^{j'}_{i'}   \phi   =\mathcal{O}(\varepsilon^2)+ \la P\Psi^{j'}_{i'},\phi\ra=\mathcal{O}(\varepsilon^2)$.
		By calculation, we have
		\begin{eqnarray*}
			\left| \varepsilon^2 \int_{\Sigma} (e^{-\varphi_{i'}}-1) \chi_{i'} e^{U_{i'}} \Psi^{j'}_{i'} \, dv_g    \right| 
			&\leq & \mathcal{ O}\left( \int_{|y|\leq 2 r_0} \frac{\tau_i\varepsilon |y|^2 \, d y  }{(\tau_i^2\varepsilon^2 +|y|^2)^3} \right)=\mathcal{O}(\varepsilon). 
		\end{eqnarray*}
		Applying Lemma \ref{lem4} and Lemma \ref{lem5},
		\begin{eqnarray*}
			&&	\left|  \int_{\Sigma} \varepsilon^2(\sum_{i=1}^{k+l} \chi_{i} e^{U_i}- Ve^{\sum_{i=1}^{k+l} PU_i}) P\Psi^{j'}_{i'} \phi \, dv_g \right|\\
			&\leq & C \left| \varepsilon^2(\sum_{i=1}^{k+l} \chi_{i}e^{U_i}- Ve^{\sum_{i=1}^{k+l} PU_i}) \right|_{L^p(\Sigma)} |\phi|_{L^q(
				\Sigma)} \| P\Psi^{j'}_{i'} \|\\
			&\leq & \mathcal{O}(\varepsilon^{\frac{2(1-p)}{p}} ), 
		\end{eqnarray*}
		where $\frac{1}{p}+\frac{1}{q}<1$ and $C>0$ is a constant. 
		Further, Lemma \ref{lemb3} implies 
		$ P\Psi^{j'}_{i'} -\chi_{i'}\Psi^{j'}_{i'} =\mathcal{O}(1). $ And applying Lemma \ref{lemb2},  for any $i=1,\cdots, k+l $
		\begin{eqnarray*}
			\left| \varepsilon^2 \int_{\Sigma} \chi_{i} e^{U_i} \phi( P\Psi^{j'}_{i'} -\chi_{i'}\Psi^{j'}_{i'}) \, dv_g \right|
			&\leq& \mathcal{O}( |\varepsilon^2 \chi_{i}e^{U_i}|_{L^p(\Sigma)} |\phi|_{L^q(\Sigma)})\\
			&\leq&  \mathcal{O}( \varepsilon^{\frac{2(1-p)}{p}}). 
		\end{eqnarray*}
		Combining these estimates, we conclude as $\varepsilon\rightarrow 0$
		\[   \frac{ 8\varrho(\xi_{i'})D_{i'}}{\pi \tau^2_{i'}\varepsilon^2} c^{\varepsilon}_{i'j'}+\mathcal{O}\left(\varepsilon^{\alpha_0-1}\sum_{i=1}^{k+l} \sum_{j=1}^{\ii(\xi_i)}|c_{ij}^{\varepsilon}|\right)
		{=} \mathcal{O}(\varepsilon^{\frac{2(1-p)}{p}}) . \]
		Then $|c_{i'j'}|{=} \mathcal{O}(\varepsilon^{\frac{2}{p}}) $, where $p\in (1,2)$. So
		\begin{equation}\label{eq:est_c_ij}
			\sum_{i=1}^{k+l} \sum_{j=1}^{\ii(\xi_i)} |c_{ij}^{\varepsilon}|=\mathcal{ O}(\varepsilon^{\frac{2}{p}})
		\end{equation} by the arbitrariness of $i'$ and $j'$. 
		Lemma~\ref{lem4} and~\eqref{eq:est_c_ij} yield that 
		\[ \|w\|^2= \left\| \sum_{i=1}^{k+l} \sum_{j=1}^{\ii(\xi_i)} c_{ij}^{\varepsilon} P\Psi^j_i \right\|^2=\mathcal{ O}\left( \sum_{i=1}^{k+l} \sum_{j=1}^{\ii(\xi_i)}|c_{ij}^{\varepsilon}|^2\frac{1}{\varepsilon^2} + \mathcal{O}(\varepsilon^{\alpha_0-1})\right){\leq}\mathcal{ O}(\varepsilon^{\frac{4}{p}-2}). \]
		Hence, it follows that as $\varepsilon\rightarrow 0$, $\|w\|=\mathcal{O}(\varepsilon^{\frac{2-p}{p}})\rightarrow 0$ for {any} $p\in (1,2)$. \par
		{\it {Step 2.  $\langle\phi, P\Psi^0_i\rangle\rightarrow 0.$}}\par
		Following the construction in~\cite{Esposito2005} and~\cite{Esposito2014singular}, we define 
		\[ \omega_i(y)=\frac{4}{3\tau_i} \log(\tau_i^2\varepsilon^2 +|y|^2 ) \frac{\tau_i^2\varepsilon^2 -|y|^2}{ \tau_i^2\varepsilon^2 +|y|^2} +\frac{8}{3\tau_i} \frac{\tau_i^2\varepsilon^2}{ \tau_i^2\varepsilon^2 +|y|^2}, \]
		and 	\[ t_i(y)=  -2 \frac{\tau^2_i \varepsilon^2 }{ \tau^2_i\varepsilon^2+|y|^2}. \]
	It holds that 
		$$
		\int_{\mathbb{R}^2}\left|\nabla \omega_i\right|^{2}=M_i^{2}(1+o(1))(\log \varepsilon)^{2}, \quad \int_{\mathbb{R}^2}\left|\nabla t_i\right|^{2}=O(1), \text { as } \varepsilon \rightarrow 0
		$$
		with $M_i=\frac{32}{3 \tau_i}\left(\int_{\mathbb{R}^{2}} \frac{|y|^{2}}{\left(1+|y|^{2}\right)^{4}}\right)^{1 / 2}$. 
		Let \[ u_i(x)=\chi_{i}(x)\left( \omega_i(y_{\xi_i}(x))+\frac{2\varrho(\xi_i)}{3\tau_i} H^g(\xi_i,\xi_i) t_i(y_{\xi_i}(x))\right) , \text{ for all } x \in U_{2r_0}(\xi_i). \]
		The projection $Pu_i\in \oH$ from $u_i$ is given by
		\begin{equation}~\label{pui}
			\begin{cases}
				(-\Delta_g+\beta) P u_i=-\chi_{i} \Delta_g u_i(x)+ \overline{\chi_{i} \Delta_g u_i(x)}& x\in \intsigma \\
				\partial_{ \nu_g } Pu_i=0& x\in \partial \Sigma\\
				\int_{\Sigma} Pu_i =0
			\end{cases}. 
		\end{equation} 
		Let us consider 
		$\eta_i:= u_i-Pu_i+\frac{2\varrho(\xi_i)}{3\tau_i} H^g(x,\xi_i)$. The integral of $\eta_i$ over $\Sigma$ is given by 
		$
		\int_{\Sigma} \eta_i \, dv_g  
		= \mathcal{ O}(\varepsilon^2\log^2 \varepsilon). 
		$
		If $\xi_i\in\intsigma$, we have $\partial_{\nu_g}\eta_i\equiv 0$ in $\partial \Sigma$. 
		For $\xi_i \in \partial \Sigma$,
		$ |\partial_{\nu_g}\eta_i(x)|_{L^p(\partial\Sigma)}=\mathcal{O}(\varepsilon^{\frac 1 p}|\log\varepsilon|)$.
		In view of $\int_{\R^2}\frac{1-|y|^2}{(1+|y|^2)^3}\log(1+|y|^2)\, d y =-\frac {\pi}2,$ and $\int_{\R^2} \frac 2 {(1+|y|^2)^3} \, d y= \int_{\R^2}\frac{1}{(1+|y|^2)^2}\, d y=\pi, $
		$$\left|(-\Delta_{g}+\beta) \eta_i\right|_{L^p(\Sigma
			)}={ \mathcal{O}(\varepsilon^{\frac 1 p}|\log\varepsilon|)}.$$
		By the  $L^p$-theory in Lemma~\ref{lem:inh_LP}, 
	$
			\| \eta_i-\overline{ \eta_i}\|_{W^{2,p}(\Sigma)}\leq C \varepsilon^{\frac 1 p} |\log \varepsilon|,
	$
		for any $p>1$. 
		Applying Sobolev inequality,
		$ \left|\eta_i-\overline{\eta_i} \right|_{C^{\gamma}(\Sigma)}\leq C \varepsilon^{\frac 1 p} |\log \varepsilon|, $
		for any $\gamma \in (0, 2(1-\frac 1 p)). $
		Choosing $p\in (1,2]$, we deduce that 
		\begin{eqnarray}~\label{eq4}
			{	|\eta_i|\leq \mathcal{ O}(\varepsilon^{\frac 1 p}|\log \varepsilon|). }
		\end{eqnarray}
		Moreover, for any $x\in \Sigma\setminus\{\xi_i\}$, the following inequality holds:
		\begin{equation}~\label{eq3}
			\left|	Pu_i(x)- \frac{2\varrho(\xi_i)}{3\tau_i}G^g(x,\xi_i)\right|\leq \mathcal{O}(\varepsilon^{\frac 1 p}|\log\varepsilon|).
		\end{equation}
		Additionally, $\|Pu_i\|^2$ is computed directly as
		\begin{eqnarray*}
			\|Pu_i\|^2& =& \la Pu_i,Pu_i\ra=-\int_{\Sigma}\chi_i\left(u_i+\frac{2\varrho(\xi_i)}{3\tau_i}H^g_{\xi_i}+\mathcal{O}(\varepsilon^{\frac 1 p}|\log\varepsilon|)\right)\Delta_gu_i\\
			&=& \mathcal{O} (|\log\varepsilon|^2). 
		\end{eqnarray*}
		Thus  as $\varepsilon\rightarrow 0$
		\begin{eqnarray}~\label{eq2}
			\|Pu_i\|=\mathcal{O} (|\log \varepsilon |).
		\end{eqnarray}
		Applying $Pu_i$ as a test function for  \eqref{wpsi}, 
		\begin{eqnarray*}
			\la Pu_i, \phi \ra
			-\int_{\Sigma} \varepsilon^2 Ve^{\sum_{h=1}^{k+l}PU_h}\phi Pu_i \, dv_g = \la Pu_i, w+\psi\ra. 
		\end{eqnarray*}
		Considering 
		$|\la Pu_i, w+\psi\ra|\leq \|Pu_i\| (\|w\|+\|h\|)\leq \|Pu_i\|o\left(\frac{1}{|\log \varepsilon|}\right)=o(1),$
		we deduce that 
		\begin{eqnarray}~\label{eq5}
			\la Pu_i, \phi\ra 
			-\int_{\Sigma} \varepsilon^2V e^{\sum_{h=1}^{k+l} PU_h}\phi Pu_i \, dv_g(x) =o(1). 
		\end{eqnarray}
		By~\eqref{pui} and $\|\phi\|=1$ with the H\"{o}lder inequality, 
		\begin{eqnarray}~\label{puii}
	&&\la Pu_i,\phi\ra=\int_{\Sigma} (-\chi_i \Delta_g u_i+\overline{\chi_i \Delta_g u_i})\phi \, dv_g \\
			&=& \int_{\Sigma}\varepsilon^2  e^{U_i}u_i \, dv_g 
			+\int_{\Sigma}\frac{2\varrho(\xi_i)}{3\tau_i} H^g(x,\xi_i)\varepsilon^2\chi_i e^{U_i}\phi \, dv_g+\la P\Psi^0_i,\phi\ra
			+\mathcal{O}(\varepsilon^{\frac{2-q} q}),\nonumber
		\end{eqnarray}
		for any $q\in (1,2)$.
		On the other hand, \eqref{lemb2} and \eqref{eq4} with the H\"{o}lder inequality yield that 
		\begin{eqnarray}\label{puii2}
			\quad	\int_{\Sigma}\varepsilon^2 Ve^{\sum_{h=1}^{k+l} PU_h}\phi Pu_i \, dv_g &=& \int_{\Sigma}\varepsilon^2  e^{U_i}u_i \, dv_g 
			+\int_{\Sigma}\frac{2\varrho(\xi_i)}{3\tau_i} H^g(x,\xi_i)\varepsilon^2\chi_i e^{U_i}\phi \, dv_g\nonumber\\
			&&+\mathcal{O}(\varepsilon^{\frac 1 p +2\left(\frac 1 s-1\right)}),\nonumber
		\end{eqnarray}
		for any $s\in(1,2)$. 
		We choose $s,p$ sufficiently close to $1$ such that $\frac 1 p+ 2(\frac 1 s-1)>0$. Then~\eqref{puii} and~\eqref{puii2} imply that 
		$\la P\Psi^0_i,\phi\ra=o(1),$ as $\varepsilon\rightarrow 0.$
		\par
		{\it Step 3. Construct a contradiction.}\\
		Define the following space for $\xi=(\xi_1,\cdots, \xi_{k+l})\in M_{\delta}$. We denote that $\R_i=\R^2$ if $1\leq i\leq k$; $\R_i=\R^2_+:=\{ 
		y\in \R^2: y_2\geq 0\}$ if $ k+1\leq i\leq m.$
		Let $\pi_N$ be the stereographic projection through the north pole for the standard unit sphere in $\mathbb{R}^3$.
		We denote that	$S_i=\pi_{N}(\R_i)$ for $i=1,\cdots, k+l .$ We define 
		\[ L_i:=\left\{  \Psi: \left| \frac{\Psi}{1+|y|^2}\right|_{L^2(\R_i)} <+\infty\right\}, \]
		and 
		\[ H_i:=\left\{\Psi:  |\nabla \Psi|_{L^2(\R_i)} + \left| \frac{\Psi}{1+|y|^2} \right|_{ L^2(\R_i)   }<\infty\right\}\]
		The associated norms are defined as the following, 
		\[ \|\Psi\|_{L_i}:=\left| \frac{\Psi}{1+|y|^2}\right|_{L^2(\R_i)}\text{ and } 
		\|\Psi\|_{H_i}:=|\nabla \Psi|_{L^2(\R_i)} + \left| \frac{\Psi}{1+|y|^2} \right|_{L^2(\R_i)}.  \]
		The maps
		\begin{eqnarray}
			L_i\rightarrow L^2(S_i): 		\Psi\mapsto \Psi\circ \pi_N   
		\end{eqnarray}
		and 
		$  H_i\rightarrow H^1(S_i): 		\Psi\mapsto \Psi\circ \pi_N $
		are isometric. 
		Let $\Omega_i^{\varepsilon}:= \frac{1}{\tau_{i}\varepsilon} B^{\xi_{i}}_{2r_0}, \phi^{\varepsilon}_i(x)=\phi(y_{\xi_i}^{-1}( {\tau_i\varepsilon}{y}))$ and $\chi^{\varepsilon}_i(y)= \chi(\tau_i\varepsilon| y|)$. 
		Consider 
		\[ \tilde{\phi}^{\varepsilon}_i=\begin{cases}
			\phi_i^{\varepsilon} \chi_i^{\varepsilon}  & y\in\Omega_i^{\varepsilon} \\
			0&  y\in \R_i\setminus \Omega_i^{\varepsilon} 
		\end{cases} .\]
		By Lemma \ref{lem5} and H\"{o}lder inequality,  we have 
		\begin{eqnarray*}
			\sum_{h=1}^{k+l} 	\varepsilon^2\int_{\Sigma} e^{-\varphi_h}\chi_h e^{U_h} \phi^2 \, dv_g &=& \varepsilon^2\int_{\Sigma} Ve^{\sum_{h=1}^{k+l}PU_h} \phi^2 \, dv_g\\
			&& +\mathcal{O}\left( \int_{\Sigma} \varepsilon^2 |\sum_{h=1}^{k+l} e^{-\varphi_h}\chi_he^{U_h}-Ve^{\sum_{h=1}^{k+l} PU_h}| \phi^2 \, dv_g \right)\\
			&=& \varepsilon^2\int_{\Sigma} Ve^{\sum_{h=1}^{k+l}PU_h} \phi^2 \, dv_g+o(1),
		\end{eqnarray*}
		where $p\in (1,2)$ and $\frac{1}{p}+\frac{1}{q}=1$. 
		On the other hand,  we take the inner product of~\eqref{wpsi} with $\phi$, since $\|\phi\|=1$ and $\|\psi\|=o(\frac{1}{\log \varepsilon}), $
		\begin{equation*}
			\varepsilon^2\int_{\Sigma} Ve^{\sum_{h=1}^{k+l}PU_h} \phi^2 \, dv_g =\la \phi,\phi\ra -\la w+\psi,\phi\ra =1+ o(1).  
		\end{equation*} 
		By direct calculation, we have 
		\begin{eqnarray*}\sum_{i=1}^{k+l}\varepsilon^2\int_{\Sigma}e^{-\varphi_i}\chi_i e^{U_i}\phi^2 \, dv_g &=&\sum_{i=1}^{k+l} \int_{B^{\xi_i}_{2r_0}} \frac{8\tau_i^2\varepsilon^2\chi^2(|y|/r_0)}{(\tau_i^2\varepsilon^2+|y|^2)^2}(\phi\circ y_{\xi_i}^{-1}(\tau_i\varepsilon y))^2 \, d y +\mathcal{O}(\varepsilon^2) \\
			&=&8\sum_{i=1}^{k+l} \int_{\R_i}\frac{ |\tilde{\phi}^{\varepsilon}_i(y)|^2}{(1+|y|^2)^2} \, d y+\mathcal{O}(\varepsilon^2);\\
			\int_{\Omega_i^{\varepsilon}} |\nabla \tilde{\phi}^{\varepsilon}_i|^2 \, d y &=&\int_{\frac 1 {\tau_i\varepsilon}B_{2r_0}^{\xi_i}} |\chi_i^{\varepsilon} \nabla \phi^{\varepsilon}_i + \phi_i^{\varepsilon} \nabla \chi_i^{\varepsilon}|^2 \, d y\\
			&=& \mathcal{O}\left(\int_{\Sigma} |\nabla \phi|_g^2 \, dv_g + \int_{\Sigma} e^{-\varphi_i}|\phi(x)|^2 \, dv_g \right)= \mathcal{ O}\left( \|\phi\|\right)=\mathcal{ O}(1). 
		\end{eqnarray*}
		Hence $\tilde{\phi}^{\varepsilon}_i$ is bounded in $H_i$. We observe that  $H_i$  compactly embeds into $L_i$.  
		Up to a subsequence, as $\varepsilon\rightarrow 0$,
		$ \tilde{\phi}^{\varepsilon}_i\rightarrow  \tilde{\phi}_i^0$
		weakly in $H_i$ and strongly in $L_i$. 
		\begin{eqnarray}~\label{scaleL}
			\sum_{i=1}^{k+l}	\|\tilde{\phi}_i^0\|^2_{L_i}= \frac 1 {8}.
		\end{eqnarray}
		For any $h\in C_c^{\infty}(\mathbb{R}^2)$, assume that  $\text{ supp }h \subset B_{R_0}(0).$
		If $\tau_i \varepsilon <\frac {r_0}{R_0}$, then 
		$\text{ supp }\nabla \chi\left(\frac{|y|}{r_0}\right) \cap \text{{ supp }} h\left(\frac{1}{\tau_i\varepsilon} y\right) =\emptyset. $
		For any $\Phi\in \oH$, 
		\begin{eqnarray}
			~\label{eqp1}
			0&=&\int_{B_{2r_0}^{\xi_i}} \Phi\circ y_{\xi_i}^{-1}(y)\nabla \chi\left(\frac{|y|}{r_0}\right) \cdot \nabla  h\left(\frac{1}{\tau_i\varepsilon} y\right) \, d y\\
			&=&
			\int_{B_{2r_0}^{\xi_i}} h\left(\frac{1}{\tau_i\varepsilon} y\right)\nabla( \Phi\circ y_{\xi_i}^{-1}(y))\cdot \nabla \chi\left(\frac{|y|}{r_0}\right)   \, d y. \nonumber
		\end{eqnarray}
		In \eqref{eqp1}, we take $\Phi=\phi, w\text{ and } \psi$, respectively.\par 
	For any $\|h\|:=(\int_{\mathbb{R}^2} |\nabla h|^2+|h|^2) ^{\frac 1 2}\leq 1$ and $h\in C_c^{\infty}(\mathbb{R}^2)$, it holds
		\begin{eqnarray}~\label{eqp2}
			\int_{\Sigma} \chi_ih^2\left(\frac{1}{\tau_i\varepsilon} y_{\xi_i}(x) \right) \, dv_g(x)= O(\varepsilon^2) 
		\end{eqnarray}
		{and }
		\begin{eqnarray}\label{eqp3}
			\int_{\Sigma} \chi_i\left |\nabla h\left(\frac{1}{\tau_i\varepsilon} y_{\xi_i}(x) \right) \right|_g^2  \, dv_g(x)= \mathcal{ O}(1) . 
		\end{eqnarray}
	Combining the result in {\it Step 1} and $\|\psi\|=o(\frac 1 {|\log\varepsilon|})$, 
		\begin{equation}~\label{eqp5}
			\|w\|+\|\psi\|=o(1).
		\end{equation} 
		Assume that $0<\varepsilon< \frac{r_0}{\tau_i R_0}$, as $\varepsilon\rightarrow 0$
		\begin{eqnarray*}
			&&\int_{\R_i} \nabla\tilde{\phi}^{\varepsilon}_i \nabla h \, d y =
			\int_{B_{2r_0}^{\xi_i}} \nabla\left( \chi\left(\frac{|y|}{r_0}\right) \phi\circ y_{\xi_i}^{-1}(y)\right)\cdot\nabla h\left(\frac{1}{\tau_i\varepsilon} y\right) \, d y\\
			&\stackrel{\eqref{eqp1}}{=}&  \int_{B_{2r_0}^{\xi_i}} \nabla \phi\circ y_{\xi_i}^{-1}(y)\cdot\nabla\left( \chi\left(\frac{|y|}{r_0}\right)  h\left(\frac{1}{\tau_i\varepsilon} y\right) \right)\, d y \\
			&=& \int_{\Sigma} \left \la\nabla \phi, \nabla \left( \chi_i(x) h\left(\frac{1}{\tau_i\varepsilon} y_{\xi_i}(x)\right)\right) \right\ra_g \, dv_g \\
			&\stackrel{\eqref{wpsi}}{=}& -\beta \int_{\Sigma}\chi_i h\left(\frac{1}{\tau_i\varepsilon} y_{\xi_i}(x) \right) \phi \, dv_g +\int_{\Sigma} \varepsilon^2\chi_i Ve^{\sum_{h=1}^{k+l}PU_h} \phi h\left(\frac{1}{\tau_i\varepsilon} y_{\xi_i}(x) \right)\, dv_g(x) \\
			&&- \frac 1 {|\Sigma|_g}\int_{\Sigma} \varepsilon^2 Ve^{\sum_{h=1}^{k+l}PU_h} \phi \, dv_g(x) \int_{\Sigma}\chi_{i}  h\left(\frac{1}{\tau_i\varepsilon} y_{\xi_i}(x) \right)\, dv_g(x) \\
			&&+ \int_{\Sigma} \left \la\nabla (w+\psi), \nabla \left( \chi_i(x) h\left(\frac{1}{\tau_i\varepsilon} y_{\xi_i}(x)\right)\right) \right\ra_g \, dv_g \\
			&=&-\tau_i^2\varepsilon^2\beta\int_{\R_i}\tilde{\phi}^{\varepsilon}_i(y) h(y) \, d y +\int_{\Sigma} \varepsilon^2\chi_i Ve^{\sum_{h=1}^{k+l}PU_h} \phi h\left(\frac{1}{\tau_i\varepsilon} y_{\xi_i}(x) \right)\, dv_g(x)\\
			&&-\tau_i^2\varepsilon^2\int_{\Sigma} \varepsilon^2 Ve^{\sum_{h=1}^{k+l}PU_h} \phi \, dv_g(x) \int_{\R_i} \chi\left(\frac{\tau_i\varepsilon |y|}{r_0}\right)e^{\varphi_{i}(\tau_i\varepsilon y)}h(y) \, d y+o(1),
		\end{eqnarray*}
		for any $h\in C_c^{\infty}(\mathbb{R}^2)$ with $\|h\|\leq 1$.
		By the H\"{o}lder inequality and Lemma~\ref{lem5}, 
		\[\left|\int_{\R_i}\tilde{\phi}^{\varepsilon}_i(y) h(y) \, d y\right| 
		\leq \|\tilde{\phi}^{\varepsilon}_i(y) \|_{L_i}\left(\int_{\R_i} (1+|y|^2)^2|h(y)| \, d y\right)^{\frac 12 }\leq C \|\tilde{\phi}^{\varepsilon}_i(y) \|_{L_i}\|h\|, \]
		and 
		\begin{eqnarray*}
			&&\left|\varepsilon^2 \int_{\Sigma} \varepsilon^2 \chi_i Ve^{\sum_{h=1}^{k+l}PU_h} \phi \right|\leq C\left(\varepsilon^{\frac{2}{p}}\|\phi\|\right),
		\end{eqnarray*}
		where $C>0$ is a constant depending only on $R_0$,
		Applying Lemma~\ref{lemb1} and \eqref{tau},
		\begin{eqnarray*}
				& &\int_{\Sigma}  \varepsilon^2\chi_i V e^{\sum_{h=1}^{k+l} P U_h} \phi h\left(\frac{1}{\tau_i \varepsilon} y_{\xi_i}\right) \, dv_g \\
				&= & \int_{U_{2 r_0}\left(\xi_i\right)}	\frac{ 8 \tau_i^2\varepsilon^2\chi_i}{\left(\tau_i^2 \varepsilon^2+\left|y_{\xi_i}\right|^2\right)^2} \exp\{-\log(8\tau_i^2)+\varrho\left(\xi_i\right) H^g\left(\xi_i, \xi_i\right)\\
				&&+\sum_{h \neq i} \varrho\left(\xi_h\right) G^g\left(\xi_i, \xi_h\right)+\log V\left(\xi_i\right)+\mathcal{O}\left(\left|y_{\xi_i}\right|+\varepsilon^{1+\alpha_0}\right)\}
				\phi h\left(\frac{1}{\tau_i \varepsilon} y_{\xi_i}\right) \, dv_g \\
				&= &\int_{\R_i}\frac{8}{\left(1+|y|^2\right)^2} \tilde{\phi}^{\varepsilon}_i h(y) \, d y+o(1).
		\end{eqnarray*}
		Then $\tilde{\phi}^0_i$ is a distributional solution for  the equation 
		\begin{equation}~\label{eq8u}
			-\Delta U= \frac{8}{(1+|y|^2)^2 } U \text{ in } \R_i \text{ with}  \int_{\mathbb{R}^2} |\nabla U|^2 \, d y <\infty,
		\end{equation} 
		with boundary condition $\partial_{\nu_0} U=0$ on $\partial\R_i$, where $\nu_0$ is  the unit outward normal of 
		$\partial\R_i$. 
		By the regularity theory, $\tilde{\phi}_i^0$ is a smooth solution. 
		It is well-known that any solutions to problem~\eqref{eq8u} are in the following form,
		$\tilde{\phi}^0_i(y)=\frac{a^i_0(1-|y|^2)}{1+|y|^2} +\sum_{j=1}^{\ii(\xi_i)} \frac{a^i_j y_j}{1+|y|^2}, $
		where $a_j^i\in \mathbb{R}$ for $i=1,\cdots, k+l , j=0,\cdots,\ii(\xi_i)$ (see Lemma D.1. of \cite{Esposito2005}). 
		
		Applying the result from {\it Step 2.}, 
		\begin{eqnarray*}
			&&\frac{16}{\tau_i}	\int_{\R_i} \frac{|y|^2-1}{  (|y|^2+1)^3} \tilde{\phi}^0_i(y) \, d y = \lim_{\varepsilon\rightarrow 0}
			\frac{16}{\tau_i}	\int_{\Omega_i^{\varepsilon}} \frac{|y|^2-1}{  (|y|^2+1)^3}  \phi_i^{\varepsilon}\chi_i^{\varepsilon} \, d y \\
			&=& \lim_{\varepsilon\rightarrow 0}\int_{B_{2r_0}^{\xi_i}} \varepsilon^2 e^{u_{\tau_i,0}} \psi^0_{\tau_i,0} \phi\circ y^{-1}_{\xi_i}(y)\chi(|y|)  \, d y= \lim_{\varepsilon\rightarrow 0}\int_{\Sigma}  \varepsilon^2\chi_{i} e^{-\varphi_i} e^{U_i} \Psi^0_i   \phi \, dv_g \\
			&=& \lim_{\varepsilon\rightarrow 0}\la P\Psi^0_i, \phi\ra =0. 
		\end{eqnarray*}
		For any $i=1,\cdots, k+l $ and $j=1,\cdots, \ii(\xi_i)$, 
		\begin{eqnarray*}
			&&\frac{32}{\tau_i \varepsilon}	\int_{\R_i} \frac{y_j}{  (|y|^2+1)^3} \tilde{\phi}^0_i \, d y = \lim_{\varepsilon\rightarrow 0} \frac{32}{\tau_i\varepsilon}
			\int_{\Omega_i^{\varepsilon}} \frac{y_j}{  (|y|^2+1)^3}  \phi^{\varepsilon}_i\chi^{\varepsilon}_i \, d y \\
			&=& \lim_{\varepsilon\rightarrow 0} \int_{B_{2r_0}^{\xi_i}} \varepsilon^2\chi\left(\frac{|y|}{r_0}\right) e^{u_{\tau_i,0}}  \psi^j_{\tau_i,0} \phi\circ  y^{-1}_{\xi_i}(y) \, d y\\
			&=& \lim_{\varepsilon\rightarrow 0}\int_{U_{2r_0}(\xi_i)} \varepsilon^2\chi_{i} e^{U_i} e^{-\varphi_i} \Psi^j_i \phi(x) \, dv_g =\lim_{\varepsilon\rightarrow 0} \la P\Psi^j_i,\phi\ra=0. 
		\end{eqnarray*}
		Thus 	for any $i=1,\cdots, k+l , j=1,\cdots, \ii(\xi_i)$
		$
		\int_{\R_i} \frac{|y|^2-1}{(|y|^2+1)^{3}} \tilde{\phi}_i^0 \, d y  =  	\int_{\R_i } \frac{y_j}{(|y|^2+1)^{3}} \tilde{\phi}_i^0 \, d y =0.
		$
		It indicates that $\tilde{\phi}_i^0\equiv 0, $ which contradicts to \eqref{scaleL}.  
		\end{altproof}

	\end{appendices}
	

	\bibliographystyle{plain} 
	\bibliography{KS_2024} 
	\vspace{2mm}\noindent
	\noindent
	{\sc Mohameden Ahmedou, Thomas Bartsch}\\
	{\it Mathematisches Institut,
		Universit\"at Giessen}\\
	Arndtstr.\ 2, 
	35392 Giessen, Germany\\
	\textsf{\href{mailto:Mohameden.Ahmedou@math.uni-giessen.de}{Mohameden.Ahmedou@math.uni-giessen.de}}
	\\
	\textsf{\href{mailto:Thomas.Bartsch@math.uni-giessen.de}{Thomas.Bartsch@math.uni-giessen.de}}
	\\
	{\sc Zhengni Hu}\\
	{\it School of Mathematical Sciences,
		Shanghai Jiao Tong University}\\
	800 Dongchuan RD, Minhang District, 200240 Shanghai, China\\
	\textsf{\href{mailto: zhengni_hu2021@outlook.com}{ zhengni\_hu2021@outlook.com}}
	\\
\end{document}